\newcommand{\Cross}{\mathbin{\tikz [x=1.4ex,y=1.4ex,line width=.2ex] \draw (0,0) -- (1,1) (0,1) -- (1,0);}}%
\newcommand{\Wg}{\mathrm{Wg}}
\def\tr{\mathrm{tr}}
\def\Tr{\mathrm{Tr}}
\def\<{\langle}
\def\>{\rangle}
\def\P{{\mathbb P}}
\def\E{{\mathbb E}}
\newcommand{\ab}{\allowbreak}
\newcommand{\cP}{\mathcal{P}}
\newcommand{\VN}{\mathcal{V}_{\overrightarrow{\sigma}, \varepsilon, N }{(p,q)}}
\newcommand{\ANm}[1]{\mathcal{A}_{ \overrightarrow{\sigma}
\varepsilon, N}^{(p,q)}(#1)}
\newcommand{\AN}{\mathcal{A}^{(p,q)}_{ \overrightarrow{\sigma}, \varepsilon, N }}
\newcommand{\FNm}[1]{\mathcal{F}_{\overrightarrow{\sigma}, \varepsilon, N }^{(p,q)}(#1)}
\newcommand{\FN}{\mathcal{F}^{(p,q)}_N}
\newcommand{\ith}{\mathit{th}}
\newcounter{jmpnumber}\setcounter{jmpnumber}{1}
\newcommand{\ds}{\displaystyle}
\def\cov{\mathbb{C}\mathrm{ov}}
\newcommand{\be}{\begin{equation}}
\newcommand{\ee}{\end{equation}}
      \newtheorem{theorem}{Theorem}[section]
       \newtheorem{proposition}[theorem]{Proposition}
       \newtheorem{corollary}[theorem]{Corollary}
       \newtheorem{lemma}[theorem]{Lemma}
\newtheorem{definition}[theorem]{Definition}
\newtheorem{notation}[theorem]{Notation}
\newtheorem{example}{Example}[section]
\theoremstyle{remark}
\newtheorem{remark}[theorem]{Remark}
\title[Asymptotic $\ast$--distribution of permuted Haar unitary matrices]
{Asymptotic $\ast$--distribution of permuted\\[5pt] Haar unitary matrices}
\author[J.~A.~Mingo]{James A. Mingo} 
\address[J.~A.~Mingo]{Department
	of Mathematics and Statistics, Queen's University, Jeffery
	Hall, Kingston, Ontario, K7L 3N6, Canada}
\email{mingo@mast.queensu.ca} 
\thanks{J.A.M.: Research supported by a Discovery Grant from the
Natural Sciences and Engineering Research Council of Canada.}
\author[M. Popa]{Mihai Popa}
\address[M. Popa]{Department of Mathematics\\ University of Texas at San Antonio\\ One UTSA Circle San Antonio\\ Texas 78249, USA
	\\ and “Simon Stoilow” Institute of Mathematics of the Romanian Academy\\ P.O. Box 1-764\\ 014700 Bucharest, Romania}
\email{mihai.popa@utsa.edu}
\thanks{M.P.: research partially supported by the Simons
  Foundation grant No. 360242.}
\author[K. Szpojankowski]{Kamil Szpojankowski}
\address[K. Szpojankowski]{
Wydzia\l{} Matematyki i Nauk Informacyjnych\\
Politechnika Warszawska\\
ul. Koszykowa 75\\
00-662 Warszawa, Poland.}
\email{k.szpojankowski@mini.pw.edu.pl}
\thanks{K.Sz.: research partially supported by NCN grant 2016/23/D/ST1/01077}
\begin{document}
	\begin{abstract}
		We study Haar unitary random matrices with permuted entries. For a sequence of permutations $\left(\sigma_N\right)_N$, where $\sigma_N$ acts on $N\times N$ matrices we identify conditions under which the $\ast$--distribution of permuted Haar unitary matrices $U_N^{\sigma_N}$ is asymptotically circular and free from the unpermuted sequence $U_N$. We show that this convergence takes place in the almost sure sense. Moreover we show that our conditions on the sequence of permutations are generic in the sense that are almost surely satisfied by a sequence of random permutations.
	\end{abstract}
	\maketitle
\section{Introduction}

Given an $N\times N$ matrix $A_N$ and a bijection $\sigma_N: \{1,\ldots,N\}^2\to\{1,\ldots,N\}^2$ we define a permuted matrix $A_N^{\sigma_N}$ by $\left[A^{\sigma_N}\right]_{i,j}=\left[A\right]_{\sigma_N(i,j)}$. In this paper we study in detail permuted Haar unitary random matrices. We are mostly interested in asymptotic properties of permuted Haar unitary random matrices form the point of view of non--commutative probability and in particular free probability. 

Random matrices with randomly permuted entries were studied in research literature, in particular in the context of limiting properties of spectral measure. In \cite{Chatterjee} the author studies self-adjoint matrices for which the collection of upper triangular entries forms an exchangeable family, and shows that for such random matrices, after suitable normalization, the  empirical distribution of eigenvalues converges weakly in probability to Wigner's semicircle law. Of course if we take a uniformly chosen random permutation of any family of random variables, it becomes exchangeable. Thus results from \cite{Chatterjee} can be thought of as random permutations of entries of selfadjoint random matrices, which keep the matrix Hermitian.

A non-Hermitian version of results from \cite{Chatterjee} were studied in \cite{AdamczakChafaiWolff}, where the authors showed, under some technical assumptions, that if entries of a random matrix form an exchangeable family, then after proper normalization the empirical spectral distribution converges weakly in probability to the circular law. Again one can interpret this result in terms of random permutations, where now in contrast to \cite{Chatterjee} we take a uniformly chosen permutation of all entries.

Asymptotic distributions of large random matrices are closely related with free probability. It is well known in that large, Hermitian, unitarily invariant, independent random matrices (under some technical assumptions) become asymptotically free (c.f. \cite{VoiculescuLimit} and \cite{MingoSpeicher} for details). As conjugation with a unitary matrix corresponds to a change of basis, this could be seen as a statement about matrices with randomly chosen basis of eigenvectors. In this paper we continue another thread where asymptotic freeness emerges in a surprising way, namely we look at random matrices with permuted entries.

Entry permutations of matrices have been relevant in various areas. The matrix transpose is the standard example of positive but not completely positive maps (see \cite{paulsen}), partial transposes (see \cite{HorHorHor, AubrSzaWer,ArizNechVarg}) and the so-called mixing map (see \cite{billiard,MandLinowZycz}) are used in quantum information theory.
The connection between entry permutations and free independence was first established in \cite{MingoPopaUnitarlyInvariant2016}, where it is shown that unitarily invariant random matrices are asymptotically free from their transposes. Later works address the connection to free independence in the setting of partial transposes and Wishart random matrices (see \cite{MingoPopaWishart1} and \cite{MingoPopaWishart2}) or more general permutations and matrices with non-commutative entries (see \cite{PopaHao1}, \cite{PopaHao2}). 
In the present paper instead of focusing on particular type of permutation of entries we study a general class of permutations of entries for Haar Unitary matrices. Simpler framework of Gaussian random matrices was considered in \cite{PopaGEPAF}. We also show random sequences of permutations almost surely belong to the class of permutations which we consider.

More precisely in this paper we focus on permutations of Haar unitary random matrices, that is we consider the normalized Haar measure on the group of $N\times N$ unitary matrices. For each $N$ we take a random Haar unitary matrix $U_N$ and a permutation $\sigma_N$ of the entries, then  we consider the sequence of matrices $\left(U_N^{\sigma_N}\right)_{N\geq 1}$. We do not study the limiting spectral distribution as in \cite{Chatterjee,AdamczakChafaiWolff}, but we are interested in the limiting, as $N\to\infty$, $\ast$--distribution of $\left(U_N^{\sigma_N}\right)_{N\geq 1}$ and limiting joint distribution of the pair $\left(U_N,U_N^{\sigma_N}\right)_{N\geq 1}$, seen as a non--commutative variables in some non--commutative probability space. We review basic notions of non--commutative probability, such as $\ast$--distribution and joint distribution, in Section 2. We identify a sufficient condition (C) for the non--random sequence of permutations (we refer to Notation \ref{not:41} and Definition \ref{def:permprop}, to see the precise formulations) which allows us to prove the following result.
\begin{theorem}\label{thm:11}
	If $(\sigma_N)_N$ satisfies \emph{ (C) } then $U_N^{\sigma_N}$ converges in $\ast$--moments to a circular element and  $\left(U_N\right)_{N\geq 1}$ and $\left(U_N^{\sigma_N}\right)_{N\geq 1}$ are asymptotically $*$-free.
\end{theorem}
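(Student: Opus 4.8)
The plan is to compute mixed $\ast$-moments of $U_N$ and $U_N^{\sigma_N}$ via the Weingarten calculus and show that, under condition (C), the limiting values coincide with those predicted by $\ast$-freeness of a Haar unitary $u$ and a circular element $c$. First I would recall that for a word in $U_N$, $U_N^*$ (and their permuted counterparts) the expectation $\E[\tr(\cdots)]$ is, by the Weingarten formula, a sum over pairs $(\alpha,\beta)$ of permutations in $S_k$ of a product of Kronecker deltas on the row- and column-indices, weighted by $\Wg(N,\alpha^{-1}\beta)$. The permutation $\sigma_N$ enters only by relabeling which row/column index of $U_N$ sits in a given slot of the trace: where an ordinary factor contributes indices $(i_r, i_{r+1})$, a permuted factor contributes $(\sigma_N^{\rm row}$- and $\sigma_N^{\rm col}$-transformed versions of the ambient indices). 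So the whole computation reduces to counting, for each pair $(\alpha,\beta)$, the number of index tuples consistent with the delta constraints coming from the trace structure \emph{and} the constraints imposed by $\sigma_N$; this is the point where the combinatorial hypothesis (C) on $(\sigma_N)_N$ must be exactly what guarantees the right counting asymptotics. I expect Notation 4.1 / Definition (of (C)) to be phrased precisely so that the number of solutions to these systems grows like $N^{(\text{number of independent index variables})}$ with the correct exponent, and lower-order configurations are genuinely lower order.

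The key steps, in order, are: (1) set up the general mixed trace, separating factors into four types $U_N, U_N^*, U_N^{\sigma_N}, (U_N^{\sigma_N})^*$, and apply the Weingarten expansion, obtaining a sum over $(\alpha,\beta)\in S_k\times S_k$ of $\Wg(N,\alpha^{-1}\beta)\cdot N^{-(\text{something})}\cdot \#\{\text{index solutions}\}$; (2) recall the asymptotics $\Wg(N,\pi)=N^{-(2k-\#\pi)}(\mu(\pi)+O(N^{-2}))$, so that the dominant contribution comes from pairs with $\alpha^{-1}\beta$ a product of small cycles and with the index count maximal; (3) show that when the word is purely in $U_N, U_N^*$ the surviving terms reproduce the Haar unitary moments, when it is purely in the permuted matrix condition (C) forces the surviving terms to reproduce the circular moments (non-crossing pair partitions pairing each permuted factor with an adjacent permuted-adjoint factor), and — the crucial case — when the word genuinely mixes the two, condition (C) forces the index-counting exponent to drop, so every such mixed term is $o(1)$; (4) conclude that the limiting joint $\ast$-distribution is that of $(u,c)$ with $u$ Haar unitary, $c$ circular, and $u,c$ $\ast$-free, which is precisely the assertion; (5) upgrade convergence in expectation to almost sure convergence by the standard second-moment/Borel–Cantelli argument — estimate $\var[\tr(\text{word})] = O(N^{-2})$, again via a (longer) Weingarten computation on the product of two traces, using (C) once more to control the cross terms, and invoke Borel–Cantelli.

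The main obstacle is step (3), the mixed case: one must show that any alternating word that cannot be reduced by the freeness relations has vanishing limit, and this is where the precise content of (C) does all the work. Concretely, in the Weingarten sum a mixed word creates index-identification constraints that are simultaneously ``trace-type'' (cyclic neighbors share an index) and ``$\sigma_N$-type'' (a permuted slot forces an index to equal $\sigma_N$ applied to a neighboring index); condition (C) should say, roughly, that $\sigma_N$ does not create ``short closed loops'' among these two families of constraints except in the trivial ways, so that the graph of index-variables has fewer connected components than the exponent of $\Wg$ can compensate. I would isolate this as a counting lemma: ``under (C), for any pair $(\alpha,\beta)$ and any genuinely mixed word, $\#\{\text{solutions}\} \le C\, N^{(2k-\#(\alpha^{-1}\beta))/2 - 1}$'' (or the analogous sharp bound), and the rest of the theorem follows by assembling the pieces. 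For the almost-sure statement the same counting lemma, applied to words of length $2k$ built from two copies of the original word, controls the variance; no new ideas are needed there beyond bookkeeping.
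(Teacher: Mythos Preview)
Your broad strategy---expand mixed traces via the Weingarten formula, reduce to a counting problem for index tuples, and use the combinatorial hypothesis to control the counts---is exactly what the paper does. But your step (3), the heart of the argument, has a real gap, and your proposed counting lemma is not the right statement.

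First, a structural point you are missing. The paper does not organize the Weingarten sum by the pair $(\alpha,\beta)$ directly, but by the partition $p\vee q$ (the join of the two pairings). A key preliminary result, proved \emph{without} any hypothesis on $(\sigma_N)_N$, is that whenever $p\vee q$ is crossing the corresponding term is $O(N^{-1})$; this already reduces everything to non-crossing $p\vee q$ and is what makes the free-cumulant machinery available. Your proposal never isolates this reduction, and without it the later block-by-block analysis cannot start.

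Second, your description of the mixed case is not correct as written. You say that for a genuinely mixed word ``every such mixed term is $o(1)$'', and your counting lemma quantifies this over \emph{all} pairs $(\alpha,\beta)$. But mixed words do not have vanishing limit in general: e.g.\ $\tr\big(U_NU_N^*U_N^{\sigma_N}(U_N^{\sigma_N})^*\big)\to 1$. What actually vanishes is the contribution of any single \emph{block} of $p\vee q$ on which both permuted and unpermuted factors appear; blocks that are ``pure'' contribute the appropriate free cumulant (Haar-unitary or circular), and the full moment factors over blocks. The paper proves this by induction on the blocks of a non-crossing $\omega=p\vee q$: condition (C) splits into two pieces, one ((C1)) forcing blocks of length $>2$ in the permuted variable to vanish (giving circularity), and another ((C2)) forcing blocks that mix $U_N$ and $U_N^{\sigma_N}$ to vanish (giving the vanishing of mixed free cumulants, hence freeness). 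Your single counting lemma with the bound $N^{(2k-\#(\alpha^{-1}\beta))/2-1}$ does not capture this block-level factorization and, taken literally, would prove too much.

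Finally, your step (5) does not belong here: Theorem \ref{thm:11} is the convergence-in-expectation statement; the almost-sure upgrade is a separate theorem, with its own covariance estimate.
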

The results above say exactly the following, for any polynomial $Q\in\mathbb{C}\langle x,x^*,y,y^*\rangle$ in the non-commuting variables $x, x^*, y, y^*$ we have
\begin{align}\label{eq:asymptotic_freeness}
\lim_{N\to\infty} \frac{1}{N} 
\E \big(\Tr\big(Q\big(U_N^{\sigma_N},(U_N^{\sigma_N})^*,U_N,U_N^{*}\big)\big)&=\varphi\big(Q(c,c^*,u,u^*)\big),
\end{align}
where $c$ is a circular element, $u$ is Haar unitary, and moreover $c$ and $u$ are $*$-free. We review circular, Haar unitary, and more general $R$--diagonal elements in Remark \ref{rem:Rdiag}. The fact that the $*$-distribution of $U_N$ converges to a Haar unitary is well known (see for example \cite[Ch. 4]{HiaiPetz}).

Moreover we show that the  convergence above is in fact not only in expectation but also with probability one. Thus we prove the following result.
\begin{theorem}\label{thm:12}
	If $(\sigma_N)_N$ is a given sequence of entry permutations that satisfies \emph{(C)}, then $U_N^{\sigma_N}$ converges almost surely in $\ast$-moments to a circular element and  $\left(U_N\right)_{N\geq 1}$ and $\left(U_N^{\sigma_N}\right)_{N\geq 1}$ are almost surely asymptotically  $*$-free.
\end{theorem}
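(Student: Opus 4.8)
\emph{Sketch of proof.} The plan is to bootstrap from the convergence in expectation already established in Theorem~\ref{thm:11}: once we know that $\frac1N\E\,\Tr\big(Q(U_N^{\sigma_N},(U_N^{\sigma_N})^*,U_N,U_N^*)\big)$ has the correct limit, it suffices to control the \emph{fluctuations} of $\frac1N\Tr(\,\cdot\,)$ around its mean. Since a countable intersection of almost sure events is almost sure, and since the collection of $*$-monomials in $x,x^*,y,y^*$ is countable, it is enough to fix one $*$-monomial $M$, put $X_N:=\frac1N\Tr\big(M(U_N^{\sigma_N},(U_N^{\sigma_N})^*,U_N,U_N^*)\big)$, and show $X_N-\E X_N\to 0$ almost surely; combined with $\E X_N\to\varphi(M(c,c^*,u,u^*))$ from Theorem~\ref{thm:11} this yields $X_N\to\varphi(M(c,c^*,u,u^*))$ almost surely, and intersecting over all $M$ gives the almost sure form of \eqref{eq:asymptotic_freeness}, which is exactly the assertion of Theorem~\ref{thm:12} (taking $Q$ in $x,x^*$ only recovers almost sure circularity of $U_N^{\sigma_N}$, and the full statement is almost sure asymptotic $*$-freeness). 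To get $X_N-\E X_N\to 0$ a.s.\ it is in turn enough, by Chebyshev's inequality and the Borel--Cantelli lemma, to prove the variance bound $\var(X_N)=O(N^{-2})$, so that $\sum_N\var(X_N)<\infty$.

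Thus the real work is the estimate $\var\big(\tfrac1N\Tr M\big)=\tfrac1{N^2}\big(\E[\Tr M\cdot\overline{\Tr M}]-|\E\,\Tr M|^2\big)=O(N^{-2})$. I would expand $\Tr M$ as a sum over index tuples of products of entries of $U_N$ and $\overline{U_N}$, with the entries coming from the $U_N^{\sigma_N}$-factors having their index pairs pushed through $\sigma_N$; both $\E[\Tr M\cdot\overline{\Tr M}]$ and $|\E\,\Tr M|^2$ are then evaluated by the Weingarten calculus for $\U(N)$, the former as a sum over pairs of permutations carrying a Weingarten weight $\Wg$ that match the $U_N$-entries of $\Tr M$ with those of $\overline{\Tr M}$. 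One splits this sum according to whether the matching permutations keep the $\Tr M$-entries and the $\overline{\Tr M}$-entries in separate blocks or connect the two; the ``separated'' contributions reproduce $|\E\,\Tr M|^2$ up to a relative error $O(N^{-2})$, by the standard factorization asymptotics of the Weingarten function, while the ``connected'' contributions must be shown to be smaller than the leading order $N^2$ by a factor $O(N^{-2})$. Collecting the two gives $\var(X_N)=O(N^{-2})$.

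The bound on the connected terms is the main obstacle, and it is exactly where condition~(C) enters: after pushing the relevant index pairs through $\sigma_N$, one has to count the free index summations surviving in a connected configuration and show that connecting the two traces --- whether through the Weingarten permutations or through coincidences forced by $\sigma_N$ --- always costs two powers of $N$ relative to the disconnected count. This is a genuine second-order refinement of the first-order analysis behind Theorem~\ref{thm:11}: it amounts to re-running the cycle/loop counting for two words simultaneously, using the quantitative content of~(C) to bound the number of ways $\sigma_N$ can merge index cycles belonging to different traces. I expect this step to be the most delicate but essentially mechanical given the combinatorial framework already set up for Theorem~\ref{thm:11}. (An alternative, avoiding the Weingarten bookkeeping, is a concentration argument: the map $U\mapsto \frac1N\Tr M(U^{\sigma_N},\dots)$ is Lipschitz on the event $\{\|U_N^{\sigma_N}\|_{\mathrm{op}}\le K\}$ with constant $O(N^{-1/2})$ --- using that an entry permutation is an isometry for the Frobenius norm --- and Haar measure on $\U(N)$ concentrates at scale $N^{-1}$, so one gets a summable tail bound provided one also checks that $\P\big(\|U_N^{\sigma_N}\|_{\mathrm{op}}>K\big)$ decays faster than any polynomial for a suitable constant $K$. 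I would still favour the variance route, since it reuses the estimates available from the proof of Theorem~\ref{thm:11}.)
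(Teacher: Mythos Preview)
Your strategy matches the paper's exactly: reduce to $\var(\Tr M)=O(1)$ via the Weingarten expansion split into disconnected and connected pairings, then apply Chebyshev and Borel--Cantelli. This is precisely the content of Section~6 and Lemma~\ref{lemma:cov}.

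One correction is worth making. You expect condition~(C) to be the crux of the bound on the connected terms, but in fact the paper proves the covariance bound of Lemma~\ref{lemma:cov} \emph{uniformly over all entry permutations}; condition~(C) is not invoked anywhere in Section~6. The disconnected terms are handled as you say, via the $O(N^{-2})$ error in the factorization $\Wg_N(p_1\oplus p_2,q_1\oplus q_2)-\Wg_N(p_1,q_1)\Wg_N(p_2,q_2)$. For the connected terms the paper simply adapts the index-counting of Lemma~\ref{fpq:1} to the two-trace set $\mathcal{B}_{\overrightarrow{\sigma_N},\varepsilon,N}^{(m,r)}(p,q)$: after a cyclic rotation so that some pair of $p$ or $q$ straddles the two words, the same ``add one index at a time'' estimate gives $|\mathcal{B}_{\overrightarrow{\sigma_N},\varepsilon,N}^{(m,r)}(p,q)|\le N^{m+r-|p\vee q|}$, the missing power of $N$ (relative to the single-trace bound $N^{n+1-|p\vee q|}$) arising from the extra index coincidence already forced by the connecting pair. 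No property of $\sigma_N$ beyond bijectivity is used. So the second-order step is actually easier than you anticipate; condition~(C) enters the proof of Theorem~\ref{thm:12} only through Theorem~\ref{thm:11}, to identify the almost-sure limit.
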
\noindent
Thus Theorem \ref{thm:12} says that the limit in (\ref{eq:asymptotic_freeness})   can be replaced by almost sure convergence
\begin{align*}
\lim_{N\to\infty} \frac{1}{N} 
\Tr \big(Q\big(U_N^{\sigma_N},(U_N^{\sigma_N})^*,U_N,U_N^{*}\big)\big)&=\varphi\big(Q(c,c^*,u,u^*)\big).
\end{align*}

In the subsequent sections we work with two more general conditions (C1) and (C2), where property (C1) assures that the asymptotic $*$--distribution of the permuted matrix is circular and (C2) gives $*$-freeness between the permuted and the unpermuted matrix. We show that if a sequence of permutations satisfies (C) then it automatically satisfies both (C1) and (C2).
We give the precise definitions of these conditions in Section \ref{sec:asymptotic_distribution}. Roughly speaking,  condition (C) says that there is a relatively small number of pairs of entries which were in the same row or column, and after permutation $\sigma_N$ still are in the same row or column. Heuristically, this condition forces the permutation $\sigma_N$ to break the structure of a unitary matrix.

Next we prove that condition (C) is satisfied almost surely by sequences of uniformly chosen random permutations. This result shows that condition ($C$) is rather generic. It also shows also that conditions (C1) and (C2) are satisfied almost surely by sequences of random permutations. This observation together with the results above can be summarized in the following theorem.
\begin{theorem}\label{thm:13}
	If $(\sigma_N)_N$ is a sequence of random permutations, chosen uniformly, and independent from the sequence  $(U_N)_N$, then $(U_N^{\sigma_N})$ converges almost surely (with respect to $ ( \sigma_N)_N $) in $\ast$--moments to a circular element and $\left(U_N\right)_{N\geq 1}$ and $\left(U_N^{\sigma_N}\right)_{N\geq 1}$ are almost surely asymptotically  $*$-free.
\end{theorem}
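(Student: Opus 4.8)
The plan is to deduce Theorem \ref{thm:13} from Theorem \ref{thm:12} together with the single extra fact that a uniformly chosen sequence of permutations almost surely satisfies (C). Since $(\sigma_N)_N$ and $(U_N)_N$ are carried by independent spaces, the relevant sample space is a product $\Omega_\sigma\times\Omega_U$, and the convergence asserted in \eqref{eq:asymptotic_freeness} (in its almost sure form) concerns a measurable event $A\subseteq\Omega_\sigma\times\Omega_U$ (a countable intersection over monomials $Q$). Conditioning on $(\sigma_N)_N$ and applying Fubini's theorem, $\P(A)=\int_{\Omega_\sigma}\P_U(A_{\omega_\sigma})\,d\P_\sigma(\omega_\sigma)$; on the event $E=\{(\sigma_N)_N\text{ satisfies (C)}\}$ each realization is a deterministic sequence satisfying (C), so Theorem \ref{thm:12} gives $\P_U(A_{\omega_\sigma})=1$ there, whence $\P(A)\ge\P_\sigma(E)$. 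Thus everything reduces to proving $\P_\sigma(E)=1$, i.e.\ that a uniform random sequence of entry permutations almost surely satisfies (C).

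First I would estimate the first moment of the quantity that (C) controls. Let $\mathcal B_N$ be the set of unordered pairs of positions in $\{1,\dots,N\}^2$ lying in a common row or column whose images under $\sigma_N$ again lie in a common row or column, so that (up to notation) (C) is a bound $|\mathcal B_N|\le t_N$ with $t_N\ge N^{\alpha}$ eventually, for some fixed $\alpha>2$ (the threshold of Definition \ref{def:permprop}). There are $\Theta(N^3)$ pairs of distinct positions in a common line of the domain; for a uniform random bijection of $\{1,\dots,N\}^2$ the image of a fixed such pair is a uniform pair of distinct positions, and the fraction of pairs of distinct positions lying in a common line is $\Theta(1/N)$. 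Hence, by linearity of expectation, $\E|\mathcal B_N|=\Theta(N^2)$, where an explicit count — keeping track of the row$\to$row, row$\to$column, column$\to$row and column$\to$column contributions — pins down the constant. In particular $\E|\mathcal B_N|$ is below the threshold of (C) by a fixed positive power of $N$.

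The first moment alone does not suffice, because Markov's inequality only gives $\P(|\mathcal B_N|>t_N)=O(N^{2}/t_N)$, which need not be summable, so the remaining — and only substantial — step is a fluctuation bound for $|\mathcal B_N|$. I would argue by a second moment estimate: writing $|\mathcal B_N|=\sum_P\mathbf 1_{\{P\in\mathcal B_N\}}$ over common-line domain pairs $P$, the covariance of $\mathbf 1_{\{P\in\mathcal B_N\}}$ and $\mathbf 1_{\{P'\in\mathcal B_N\}}$ is non-negligible only when $P$ and $P'$ share a position or are otherwise linked through a common line, and counting such configurations while estimating the action of $\sigma_N$ on the resulting three- and four-tuples of positions gives $\var(|\mathcal B_N|)=O(N^{3})$, comfortably below $t_N^{2}$. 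Chebyshev's inequality then yields a summable bound on $\P\big(\big||\mathcal B_N|-\E|\mathcal B_N|\big|>t_N/2\big)$, and the Borel--Cantelli lemma shows $(\sigma_N)_N$ satisfies (C) almost surely. (Alternatively one may invoke a bounded-differences inequality for functions of a uniform random permutation: a transposition of the values of $\sigma_N$ changes $|\mathcal B_N|$ by only $O(N)$ since each position lies in $O(N)$ common-line domain pairs, so the Doob martingale obtained by revealing $\sigma_N$ value by value has $N^{2}$ increments of size $O(N)$, giving $\P(|\mathcal B_N|>t_N)\le\exp\big(-c\,t_N^{2}/N^{4}\big)$, summable since $\alpha>2$.)

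The main obstacle is thus the combinatorial bookkeeping behind this concentration estimate — the covariance count in the second-moment approach, or the verification of the $O(N)$ increment bound in the martingale approach — together with getting the first-moment constant in the right regime relative to the threshold in Definition \ref{def:permprop}. Once that is in place, the Borel--Cantelli conclusion $\P_\sigma(E)=1$ combines with the Fubini reduction of the first paragraph to yield Theorem \ref{thm:13}.
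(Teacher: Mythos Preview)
Your proposal is correct and follows essentially the same route as the paper: reduce via Theorem~\ref{thm:12} to showing that condition (C) holds almost surely for a uniform random sequence of permutations (this is Proposition~\ref{prop:11}), and verify the latter by a first-moment computation giving $\E Y_N(\sigma_N,\sigma_N)=\Theta(N^2)$ together with a second-moment/covariance bound on the sum of indicator variables, then apply Chebyshev and Borel--Cantelli. The paper's covariance computation is carried out explicitly and yields $\mathrm{Var}=O(N^2)$, a bit sharper than your stated $O(N^3)$, but the mechanism is the same; your Fubini reduction and the bounded-differences alternative are not spelled out in the paper but do not change the substance of the argument.
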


In the subsequent sections we consider in fact the more general framework of a tuple of permutations $(\sigma_{N,1},\ldots,\sigma_{N,k})$. We identify a condition (C3) (See Definition \ref{def:pairperm}) which concerns pairs of permutations which allows us to state the following result.

\begin{theorem}\label{thm:14}
	Let $(\sigma_{N,1},\ldots,\sigma_{N,k})$ be permutations on $[N]\times[N]$. Suppose that for each $i$ permutation $\sigma_{N,i}$ satisfies condition \emph{(C)} and for each $i\not= j$ the pair $\{\sigma_{N,i},\sigma_{N,j}\}$ satisfies condition \emph{(C3)} then the tuple 
	$(U_N,U_N^{\sigma_{N,1}},\ldots,,U_N^{\sigma_{N,k}})$ converges in $*$--moments to the tuple $(u,c_1,\ldots,c_k)$, where $u$ is a Haar unitary $*$-free from $\{c_1,\ldots,c_k\}$ which is a family of free circular elements.
\end{theorem}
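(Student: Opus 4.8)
\emph{Strategy.} The plan is to run the moment method together with the Weingarten calculus. Since convergence in $\ast$--moments is the claim, it suffices to show that for every $\ast$--monomial
\[
w_N=M_1M_2\cdots M_m,\qquad M_\ell\in\bigl\{U_N^{\pm1},\,(U_N^{\sigma_{N,1}})^{\pm1},\,\dots,\,(U_N^{\sigma_{N,k}})^{\pm1}\bigr\},
\]
the limit $\lim_N\frac1N\E\,\Tr(w_N)$ exists and equals $\varphi(\widetilde w)$, where $\widetilde w$ is obtained by the substitutions $U_N\mapsto u$ and $U_N^{\sigma_{N,i}}\mapsto c_i$, with $u$ a Haar unitary that is $\ast$--free from a free circular family $\{c_1,\dots,c_k\}$. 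Multiplying out and centering within each colour and invoking the moment--cumulant description of freeness, this reduces to two assertions: the marginals are as stated (for the permuted matrices by Theorem \ref{thm:12}, while $U_N\to u$ Haar is classical), and the unital $\ast$--algebras generated respectively by $U_N$, by $U_N^{\sigma_{N,1}}$, \dots, by $U_N^{\sigma_{N,k}}$ are \emph{jointly} asymptotically $\ast$--free.

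\emph{Expansion and leading terms.} Expand $\Tr(w_N)$ as a sum over multi-indices of products of entries of $U_N$ and $\overline{U_N}$, where in a position of colour $i\ge1$ the index pair is first acted on by $\sigma_{N,i}$ while colour $0$ (the factors $U_N^{\pm1}$) is left alone; this is the bookkeeping behind conditions (C1)--(C2) and Theorem \ref{thm:12}, now carried out with several permutations simultaneously. The Weingarten formula turns $\E\,\Tr(w_N)$ into a sum over pairs $(\alpha,\beta)\in S_n\times S_n$, with $n$ the number of $U_N$--factors, of $\Wg(N,\alpha\beta^{-1})$ times the number of multi-indices compatible with the Kronecker deltas imposed by $\alpha$ on the post-$\sigma$ row indices and by $\beta$ on the post-$\sigma$ column indices. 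The known asymptotics of $\Wg$ isolate, as candidates for an $O(1)$ contribution after dividing by $N$, the non-crossing pairings matching each $U_N$--factor with a $U_N^{\ast}$--factor; among these, the pairings that match factors \emph{of the same colour} in the pattern producing the Haar-unitary / free-circular moments reproduce exactly $\varphi(\widetilde w)$, and everything else is an error term. Words involving at most one permuted colour are already settled by the proofs of Theorems \ref{thm:11}--\ref{thm:12}, so the new work is entirely in words involving two or more distinct permuted colours.

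\emph{Controlling the error.} For each error term I would exhibit a pair of positions, of permuted colours $i$ and $j$ (with $i=j$ allowed and one of them possibly colour $0$), whose index pairs are forced by the deltas to lie in a common row or a common column \emph{both} as entries of $U_N$ \emph{and} in the cyclic structure of $w_N$. When the pair involves colour $0$ this configuration is one of those controlled by condition (C2) (the one giving asymptotic $\ast$--freeness of $U_N^{\sigma_{N,i}}$ from $U_N$); when both positions have the same permuted colour $i$, it is one controlled by (C1) (giving that $U_N^{\sigma_{N,i}}$ is asymptotically circular); when $i\ne j$ are distinct permuted colours, it is the cross-configuration controlled by condition (C3). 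Since (C) implies (C1) and (C2) and (C3) is assumed for every pair, the number of such configurations is in each case of lower order in $N$ than the unconstrained count, which turns each a priori $O(1)$ error term into $o(1)$; there are finitely many error terms, so the total error is $o(1)$. Hence $\frac1N\E\,\Tr(w_N)\to\varphi(\widetilde w)$. The marginal and pairwise statements are the one- and two-colour specialisations, and the full joint $\ast$--freeness falls out of the surviving same-colour non-crossing pairings.

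\emph{Main obstacle.} The crux is the combinatorics of the previous paragraph: for a sub-leading $(\alpha,\beta)$ and a surviving multi-index one must reliably locate the pair of positions realising a (C1)-, (C2)- or (C3)-type coincidence and carry out the power count so that the resulting bound genuinely dominates $\Wg(N,\alpha\beta^{-1})$ times the index count. The delicate points are: the preliminary reduction of a non-alternating word to an alternating, centred one; a precise formulation of the ``graph of forced identifications'' on the positions, which must detect a shared row or column whether it is imposed at the source indices of $U_N$ or through the cyclic trace; and checking that pairwise hypotheses really suffice, i.e.\ that every bad edge of this graph joins only two positions, so that no irreducibly three-colour obstruction can occur. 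That pairwise control ought to be enough is consistent with the structure of the limit: a pairwise $\ast$--free family of circular elements is automatically a jointly free circular family (it is a $\ast$--semicircular family with diagonal covariance), and a Haar unitary can be realised as the polar part of an auxiliary circular element free from the rest; see Remark \ref{rem:Rdiag}.
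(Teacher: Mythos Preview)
Your proposal is correct and follows essentially the same route as the paper: expand via Weingarten, reduce to pairs $(p,q)$ with $p\vee q$ non-crossing, and kill the mixed-colour contributions using (C1), (C2), (C3). What you flag as the ``main obstacle'' is precisely what the paper resolves with its block-by-block induction: after a cyclic shift one takes an interval block $[m]$ of $p\vee q$, and Lemma~\ref{lemma:c2} forces $\varepsilon$ to alternate on $[m]$ (so no preliminary reduction to alternating words is needed); then if the colour $\theta$ jumps at some $s\in[m-1]$ one bounds $\FNm{\{s,s+1\}}$ by $X_N$ or $Y_N$ of the two colours involved, which is exactly the ``pair of positions'' you describe, and Lemma~\ref{fpq:1}(4) propagates the $o$-bound to the full count. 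The counting machinery $\FNm{S}$ of Lemma~\ref{fpq:1} is the paper's concrete version of your ``graph of forced identifications'', and it confirms your intuition that only two-position (hence pairwise) hypotheses are ever invoked.
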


We also show that the result above holds in the almost sure sense and observe that condition (C3) is satisfied with probability 1 by two independent sequences of random permutations.

Let us explain here the relation of our work with Adamczak, Chafa\"i, and Wolff
\cite{AdamczakChafaiWolff}. In \cite{AdamczakChafaiWolff} the authors consider random permutations of some general random matrices with real entries and look at the limiting spectral distribution. We are concerned only with Haar unitary random matrices (thus with complex entries), and study the asymptotic behaviour of $\ast$--moments. Of course in view of results from \cite{AdamczakChafaiWolff} it is expected that the asymptotic $*$--distribution will be the one of circular element, moreover one has that the Brown measure of the circular element equals the circular law (see \cite{HaagerupLarsen}). However spectral convergence is not continuous in the topology of convergence of $\ast$--moments (for a discussion about this we refer to \cite{SniadyBrownReg}). Moreover the results of \cite{AdamczakChafaiWolff} apply to sequences of uniformly random  permutations, while we identify conditions for non--random sequences, which are satisfied almost surely by random sequences of permutations.

We take advantage of the explicit form of our conditions and we identify several classes of permutations which satisfy conditions (C1) and (C2) and are inspired by ones appearing in the quantum information theory literature. Thus we believe that our result will be of some interest for the quantum information community. In Example \ref{thm:partialtranspose} we show that partial transposes with growing size of blocks and at the same time growing number of blocks satisfies our conditions. We show (see Example \ref{thm:mixingmap}) that the mixing map from \cite{MandLinowZycz} and \cite{billiard} is another example of a sequence of permutations which satisfies our conditions. A different situation,
addressed in Example \ref{d-part-transp}, is the case of partial transposes with fixed number of blocks, equal say $n=d^2$, for some $d>0$. This is an example of a sequence which does not satisfy property (C1); we determine its asymptotic $\ast$--distribution as $\tfrac{1}{d}\left(u_1+\ldots+u_{d^2}\right)$, where $u_1,\ldots,u_{d^2}$ are free Haar unitaries. Yet, we still have asymptotic free independence from the initial (un-permuted) Haar unitary. Table \ref{table:1} summarizes results of Section 5, it contains a list of permutations together with an indication if a given permutation satisfies (C1) and (C2).

\begin{table}[h!]
	\centering
	\begin{tabular}{|c| c| c|}
		\hline
		Entry permutation & (C1) asymptotic  & (C2) asymptotic freeness \\ [0.5ex] &circular distribution &  from unpermuted matrix \\
		\hline
		Identity & $\Cross$ & $\Cross$ \\ [0.5ex]
		Transpose & $\Cross$  & \checkmark \\ [0.5ex]
		Partial transpose  &  &  \\
		(fixed number of blocks) & $\Cross$ & \checkmark \\[0.5ex]
		Partial transpose &  &  \\
		(number of blocks and size& \checkmark & \checkmark \\
		of a block grows to $\infty$) &  &  \\[0.5ex]
		Mixing map & \checkmark & \checkmark \\[1ex]
		\hline
	\end{tabular}
	\caption{List of entry permutations together with information if they satisfy (C1) and (C2).}
	\label{table:1}
\end{table}

In the setting of Wishart matrices, partial transposes with a fixed number of blocks also behave distinctly from the partial transposes with both the number of blocks and their size growing to infinity. The limit distribution is a rescaled difference of two free Marchenko-Pastur laws in contrast to a shifted semicircular (see \cite{BaNec}, \cite{MingoPopaWishart1}). Moreover there is no asymptotic free independence from the initial Wishart ensemble,
as well as from the ones permuted via partial transposes with growing both size and number of blocks, as shown in \cite{MingoPopaWishart1}. 

Besides this Introduction this paper has 6 more sections. In Section 2 we introduce necessary notions from free probability and discuss the Weingarten calculus from \cite{CollinsSniady}. Section 3 is devoted to the study of asymptotic behaviour of some integrals on the unitary group. In Section 4 we prove Theorem \ref{thm:11}. In Section 5 we discuss examples of deterministic permutations which satisfy our assumptions (C1) and (C2) and the example of partial transpose with fixed number of blocks. Section 6 is devoted for the proof of Theorem \ref{thm:12}. In Section 7 we show that conditions (C) and (C3) are satisfied almost surely by sequences of random permutations, thus proving Theorem \ref{thm:13}.

\section{Background and notation}\label{section:background}
In this section we introduce briefly the main tools which we use in the present paper. First we focus  on non-crossing partitions and their role in free probability. Next we discuss the Weingarten calculus together with a reformulation which will be necessary in subsequent sections.
\subsection{Non-crossing partitions and free cumulants}
\begin{definition}\label{def:pairings_partitions} $\ $
 	
	\begin{enumerate} 
		\item	For a positive integer 
		$n$ 
		we denote by
		 $[n] $ the ordered set 
		 $\{1,\ldots,n\}$. 
		 A partition $\pi$ of $[n]$ is a collection of non-empty, pair-wise disjoint subsets  $B_1,\ldots,B_k\subseteq [n]$ such that $\bigcup_{j=1}^k B_j=[n]$. The subsets $B_j$ for $j=1,\ldots,k$ are called blocks of $\pi$, the number of blocks in $\pi$ is called the size of $\pi$ and is denoted by $|\pi|$, i.e. we have $|\pi|=k$.  
		
		\noindent
		The family of all partitions of  $[n]$ is denoted by  
		${P}(n)$. 	
		\item We say that
		 $\pi\in {P}(n)$ 
		 is a non-crossing partition if whenever $B_1,B_2$ are blocks of $ \pi $
		 such that 
		 $ i_1, i_2 \in B_1 $ 
		 and 
		 $ j_1, j_2 \in B_2 $
		 for some 
		 $ i_1<j_1<i_2<j_2\leq n$,
		 then
		 $ B_1 = B_2$.
		   The family of all non-crossing partitions of $[n]$ is denoted by $NC(n)$. It is well known that the number of non-crossing partitions in $NC(n)$ is equal to the $n^\ith$ Catalan number $C_n=\tfrac{1}{n+1}\binom{2n}{n}$, for $n\geq 1$. The set of non-crossing partitions on $[2n]$ with the property that every block has exactly two elements is denoted by $NC_2(2n)$.
		\item If every block of a partition
		 $\pi\in {P}(2n)$
		  has exactly two elements, we call such partition a pairing and we denote by ${P}_2(2n)$
		 the set of all pair partitions on $\{1,\ldots,2n\}$. Sometimes we will identify 
		$\pi\in {P}_2(2n)$ 
		with a permutation in $S_{2n}$, where each block of $\pi$ becomes a cycle of the permutation. Such permutation has the property that $\pi$ has no fixed point and $\pi^2$ is the identity permutation in $S_{2n}$.
		\end{enumerate}
\end{definition}

Next we recall the definitions of free cumulant functionals related to non-crossing partitions which play a very important role in free probability (cf. \cite{NicaSpeicherLect}, Lectures 9 and 10).
\begin{definition}
	For every
	 $n\geq 1$
	  free cumulant functional
	   $\kappa_n:\mathcal{A}^n\to \mathbb{C}$
	    is defined  recursively through equations $($valid for all 
	    $ m \geq 1 $ and 
	    all 
	    $ m $-tuples
	    $ (a_1, a_2, \dots, a_m ) \in \mathcal{A}^m)$:
	\begin{align*}
 \varphi(a_{1}\cdots a_{m})=\sum_{\pi\in NC(m)}\,\kappa_\pi(a_{1},\ldots,a_{m}),
	\end{align*}
	where for 
	$\pi=\{B_1,\ldots,B_k\}\in NC(m)$
	we denote
	\begin{align*}
	\kappa_\pi(a_1,\ldots,a_m)=\prod_{j=1}^k\,
	\kappa_{|B_j|}\left(a_i;i\in B_j\right).
	\end{align*}
\end{definition}

It turns out that freeness can be characterized in terms of free cumulants as
follows \cite[Theorem~11.16]{NicaSpeicherLect}:
random variables $X_1$ and $X_2$ are free
if  $\kappa_n\left(X_{i_1}, X_{i_2}, \ldots, X_{i_n} \right)=0$ whenever $n\geq2$,
and there are $1\leq l, k\leq n$ such that $i_l\neq i_k$.

In this paper we will deal mainly with two types of non-selfadjoint random variables: Haar unitary and circular elements. They are both examples of a more general and important family of so called $R$--diagonal elements, introduced in \cite{NicaSpeicherRdiag}. 

\begin{remark} $\ $ \label{rem:Rdiag}
	
	\begin{enumerate}
		\item Fix a non-commutative probability space $(\mathcal{A},\varphi)$ and consider related free cumulants functionals $(\kappa_n)_{n\geq 1}$. We say that a non-selfadjoint element $a\in\mathcal{A}$ is an $R$--diagonal element if for $\kappa_n(a^{\epsilon_1},\ldots,a^{\epsilon_n})\neq 0$ only when $n$ is even and $\epsilon_1\neq \epsilon_2\neq\ldots\neq\epsilon_n$. In other words only even length, alternating cumulants in $\{a,a^*\}$ are non-zero.
		
		\item 
We will consider only non--commutative probability spaces where $\varphi$ is trace. Then $\kappa_{2n}(a^*, a, \ldots ,a^*,a) = \kappa_{2n}(a, a^*, \ldots ,a,a^*)$. For an $R$-diagonal element $a$, the sequence $(\kappa_{2n}(a^*, a, \ldots ,a^*,a))_{n\geq 1}$ is called the \textit{determining sequence} of $a$.
		
		\item Two canonical examples of $R$-diagonal elements are  circular elements and Haar unitaries. By a \textit{circular element} we mean an operator $c$, such that $\kappa_2(c,c^*)=\kappa_2(c^*,c)=1$ and all other $*$-cumulants vanish. A \textit{Haar unitary} is a unitary  i.e. $uu^*=u^*u=1$ such that $\varphi(u^k)=0$ for all $k\in \mathbb{Z}\setminus \{0\}$. The determining sequence for a Haar unitary element is given by $\kappa_{2n}(u^*, u, \dots, u^*, u) = (-1)^{n-1}C_{n-1}$ where $C_n = \frac{1}{n+1} \binom{2n}{n}$ is the $n^\ith$ Catalan number.
	\end{enumerate}
\end{remark}

\subsection{Unitary Weingarten calculus}
We first recall the definition and properties of the unitary Weingarten function, $\Wg_N$,  we need. Recall from \cite{CollinsSniady} that the Weingarten function uses Schur-Weyl duality to reduce an integral over the unitary group to a sum over the symmetric group. 
\begin{remark}
	If 
	$U=[u_{i,j}]_{1\leq i,j\leq N}$ 
	is an $N\times N$
	 Haar distributed unitary random matrix then
	\begin{align}\label{eqn:Weingarten}
	\E\big(
	u_{i_1,j_1}u_{i_2,j_2}\cdots u_{i_n,j_n}
	\overline{u_{i^\prime_1,j^\prime_1}}\,  &
	\overline{u_{i^\prime_2,j^\prime_2}}
	\cdots
	\overline{u_{i^\prime_n,j^\prime_n}}
	\big)\\
\nonumber	= &
	\sum_{\sigma,\tau\in S_n}
	\big(
	\prod_{k=1}^n
	\delta_{i_k,i^\prime_{\sigma(k)}}
	\delta_{j_k,j^\prime_{\tau(k)}}
	\big) 
	\mathrm{Wg}_N (\sigma^{-1}\tau),
	\end{align}
Where $\mathrm{Wg}:S_n\times \mathbb{N}\to \mathbb{R}$ is the unitary Weingarten function. $\Wg_N(\sigma)$ is a rational function of $N$ with integer coefficients.

We will not need the exact values of the Weingarten function, we will be interested the leading term when expanded as power series in $\frac{1}{N}$. For $\sigma\in S_n$, let $\#(\sigma)$ be the number of cycles in the cycle decomposition of $\sigma$. Then the first non-zero term in the $\frac{1}{N}$ expansion of $\Wg_N(\sigma)$ is $N^{\#(\sigma)-2n}$. The Weingarten function is a central function on $S_n$, this means that its value only depends on the cycle decomposition of a permutation. Suppose $\sigma$ has the cycle decomposition $c_1 \cdots c_k$ we let $|c_i|$ be the number of elements in the $i^{th}$ cycle $c_i$.
Then
\begin{align*}
\mathrm{Wg}_N (\sigma)
=N^{\#(\sigma)-2n+} \prod_{i=1}^{k} (-1)^{|c_i|-1} C_{|c_i|-1}+O(N^{\#(\sigma) - 2(n+1)})
\end{align*}
where $C_l = \frac{1}{2l +1} \binom{2l}{l}$ is the $l^{\ith}$ Catalan number. Note the appearance of the cumulants of a Haar unitary in Remark \ref{rem:Rdiag} (2). $\hfill $ $ \square $
\end{remark}

For our purposes it is more convenient to keep track of which indices in (\ref{eqn:Weingarten}) of the $u$'s are paired with indices of the $\overline{u}$'s using pairings, instead of permutations,. In the following remark we explain how one can do this. Recall from Definition \ref{def:pairings_partitions} (3) that we will think of a pairing as a permutation with all cycles of length 2. 

\begin{remark}\label{remark:2.5}
Consider the set
 ${P}_2^\delta(2n) = \{p \in {P}_2(2n) \mid p(t) > n \mbox{ for } t \in [n] \}$;
  i.e. for each pair
   $(r, s)$
    in $p$ one element, $r$ say, is in $[n]$ and the other, in this case $s$,  is in 
     $[n+1, 2n]\ab = \{n +1, \dots, 2n\}$. 
     Define $\Psi:S_n\to {P}_2^\delta(2n)$ 
     by
      $\Psi(\sigma)(t)=n+\sigma(t)$ for $t \in [n]$. It is immediate to see that $\Psi$ is a bijection. Indeed, there is a simple algebraic relation  that will be useful. We consider $S_n$ to be a subgroup of $S_{2n}$ in the usual way; namely a permutation in $S_n$ acts trivially on the set $[n+1, 2n]$. We let $\delta \in S_{2n}$ be the permutation given in cycle form by $(1, n+1) (2, n +2) \cdots \ab (n, 2n)$. Then $\Psi(\sigma) = \sigma^{-1} \delta \sigma$. Note that if $\gamma = (1,2,3, \dots, n) \in S_n$ then $\delta \gamma$ is the permutation with the single cycle $(\delta(1), 1, \delta(2), 2, \dots, \delta(n), n)$. If $\sigma \in NC(n)$ i.e. $\sigma$ is non-crossing with respect to $\gamma$ then, $\Psi(\sigma)$ is non-crossing with respect to $\delta \gamma$. 

	Consider now permutations 
	$\sigma,\tau\in S_n$,
	 let
	  $p = \Psi(\tau)$
	   and 
	   $q = \Psi(\sigma)$
	    and consider the permutation
	     $\sigma^{-1}\tau$.
Observe that as permutations, $pq$ maps $[n]$ to $[n]$ and the restriction of $pq$ to $[n]$ is $\sigma^{-1}\tau$. In \cite[Lemma 2]{MingoPopaOrth} it was shown that the cycle decomposition of $pq$ can be written as $c_1 c_1 ' \cdots c_k c'_k$ with $c'_i = qc^{-1}q$. If we view $p$ and $q$ as partitions then the blocks of $p \vee q$ are $\{ c_1 \cup c_1', \dots, c_k \cup c'_k\}$.  Hence for each cycle
	        $(l_1,\ldots,l_t)$ in $\sigma^{-1}\tau$,
	         $\{l_1,p(l_1),l_2,p(l_2)\ldots, l_t, p(l_t)\}$ 
	         will be a block
	          $p \vee q$.
    Observe that all blocks of
     $p \vee q$ 
     will have an even number of elements and
      thus 
       $p \vee q$
        is the partition of
        $[2n]$
         with blocks
          $\{a_1,\ldots,a_{2m}\}$ (for some $m \in [n]$)
             of the form 
	\begin{align}
	\label{eqn:partition}
	a_2=p(a_1),\,a_3=q(a_2),\,a_4=p(a_2),\ldots,a_1=q(a_m).
	\end{align}
	For example take 
	$\sigma=(1)(2,3)$ 
	and
	 $\tau=(1,2,3)$ 
	 then
	  $\sigma^{-1}\tau=(1,3)(2)$
	   and we have
	    $p \vee q=\{( 1,3,4,5),(2,6)\}$ 

An important property of the Weingarten function is that $\Wg_N(\sigma)$ depends only on the conjugacy class of $\sigma$. So for any two pairings 
$ p, q \in \cP_2(2n) $   we define
 $\Wg_N(p,q)$ as follows. We write the cycle decomposition of the product $pq$ as $c_1 c'_1 \cdots c_k c'_k$, and then define $\Wg_N(p,q)$ to be $\Wg_N(\sigma)$ where $\sigma \in S_n$ is any permutation with the cycle type of $c_1 \cdots c_k$. $\Wg_N(p,q)$
	  has the asymptotic behaviour
	\begin{align*}
	\Wg_N(p,q)=N^{-2n+ |p \vee q|}\prod_{i=1}^{|p \vee q|} (-1)^{|B_i|/2-1} C_{|B_i|/2-1}+O(N^{-2n+ | p \vee q |-2}),
	\end{align*}
	where $p \vee q=\{B_1,\ldots,B_{k}\}$. We shall let $C_{p,q}$ denote the coefficient of $N^{-2n + \#(p \vee q)}$ above, namely $C_{p,q} = \ds \prod_{i=1}^{|p \vee q|} (-1)^{|B_i|/2-1} C_{|B_i|/2-1}$.

Given 
$p$, $q \in \cP_2^\delta(2n)$
 with
  $\sigma, \tau \in S_n$ 
  such that $\Psi(\sigma) = p$ and $\Psi(\tau) =q$, we have $pq|_{[n]} = \sigma^{-1} \tau$ and the pairs of the cycles of $pq$ are such that one is in $[n]$ and the other is in $[n+1, 2n]$. Thus according to our definition we have $\Wg_N(p,q) = \Wg_N(\sigma^{-1} \tau)$. Then Equation (\ref{eqn:Weingarten}) can be rewritten as
	\begin{align}\label{eqn:Weingarten2}
	\E\big(
	u_{r_1,l_1}u_{r_2,l_2}
	\cdots 
	u_{r_{n},l_{n}} \overline{u_{r_{n+1},l_{n+1}}}
	\cdots
	\overline{u_{r_{2n},l_{2n}}}
	\big)
	=
	\sum_{p,q\in \mathcal{P}_2^\delta(2n)}
	\big(
	\prod_{k=1}^n\delta_{r_k,r_{p(k)}}\delta_{l_k,l_{q(k)}}
	\big)
	 \Wg_N(p,q).
	\end{align}\qed\end{remark}

	We next need a reformulation of Equation \eqref{eqn:Weingarten2} in which elements of the matrix $U$  are not ordered so that all complex conjugates are grouped to the right. In principle this always can be done, as all the random variables commute, however to have good control of the formulas we carefully introduce here how this can be done.
\begin{remark}\label{rem:24}
	We are interested in formula for 
	$		\E \big(
		u_{i_1,j_1}^{\epsilon(1)}
		u_{i_2,j_2}^{\epsilon(2)}
		\cdots u_{i_{2n},j_{2n}}^{\epsilon(2n)}
		\big),
$
		where 
	$\epsilon\in\{1,*\}^n$.
	We denote
	 $u_{i,j}^*=\overline{u_{i,j}}$
	  and 
	  $u_{i,j}^1=u_{i,j}$.
	   In view of \eqref{eqn:Weingarten} we are only interested in such sequences $\epsilon$
	    for which $1$ and $*$ each appear $n$ times. 
	Define
	\begin{align*}
	\rho_\epsilon(s)=\begin{cases}
		|\{t\leq s \mid \epsilon(t)=1\}| & \mbox{ if } \epsilon(s)=1,\\
		n+|\{t\leq s \mid \epsilon(t)=*\}| & \mbox{ if } \epsilon(s)=*.
	\end{cases}
	\end{align*}
If we write $\epsilon^{-1}(1) = \{r_1, \dots, r_n\}$ with $r_1 < \cdots < r_n$ and $\epsilon^{-1}(*) = \{s_1, \dots, s_n\}$ with $s_1 < \cdots < s_n$ then $\rho_\epsilon(r_k) = k$ and $\rho_\epsilon(s_k) = n + k$ for $1 \leq k \leq n$. 
Let $k(t) = i(\rho_\epsilon^{-1}(r_t))$ and $l(t) = i(\rho_\epsilon^{-1}(s_t))$ then
\begin{align}\label{eq:re-indexing}
\E \big(
		u_{i_1,j_1}^{\epsilon(1)}
		u_{i_2,j_2}^{\epsilon(2)}
		\cdots u_{i_{2n},j_{2n}}^{\epsilon(2n)}
		\big) = 
		\E\left(u_{k_1,l_1}u_{k_2,l_2}\ldots u_{k_{n},l_{n}} \overline{u_{k_{n+1},l_{n+1}}}\ldots\overline{u_{k_{2n},l_{2n}}}\right).
	\end{align}
Now let
 $ \cP_2^\epsilon(2n) = \{ p \in \cP_2(2n) \mid \epsilon_u \not = \epsilon_v$
  for all pairs $(u, v) \in p\}$. Then $p \mapsto \rho_\epsilon^{-1} p \rho_\epsilon$ is a bijection from $ \cP_2^\epsilon(2n)$
  to 
  $ \cP_2^\delta(2n)$. 
  Let
   $\tilde p = \rho_\epsilon^{-1} p \rho_\epsilon$, then 
   $i = i \circ \tilde p$ $\Leftrightarrow$  	$i = i \circ \rho_\epsilon^{-1} \circ p \circ \rho_\epsilon$ $\Leftrightarrow$ $i \circ \rho_\epsilon^{-1} = i \circ \rho_\epsilon^{-1} \circ p$ $\Leftrightarrow$ $k = k\circ p$.
    Thus
\begin{equation}\label{eq:subscripts}
\prod_{(\tilde u, \tilde v) \in \tilde p} \delta_{i_{\tilde u}, i_{\tilde v}} =
\prod_{(u,v) \in p} \delta_{k_u, k_v}
\mbox{ and }
\prod_{(\tilde u, \tilde v) \in \tilde q} \delta_{j_{\tilde u}, j_{\tilde v}} =
\prod_{(u,v) \in q} \delta_{l_u, l_v}.
\end{equation}
We can make our notation even more compact by writing $\delta_{i, i\circ  p} $ for $\prod_{(u, v) \in p} \delta_{i_{u}, i_{v}}$. 
From our definition of $\Wg_N(p,q)$ we have that $\Wg_N(p,q) = \Wg_N(\rho_\epsilon^{-1} p \rho_\epsilon, \rho_\epsilon^{-1} q \rho_\epsilon)$. With Equations (\ref{eq:re-indexing}), (\ref{eq:subscripts}), and the relation above we have that Equation (\ref{eqn:Weingarten2}) can now be written as
\begin{equation}\label{eq:weingarten_3}
\E \big(
		u_{i_1,j_1}^{\epsilon(1)}
		u_{i_2,j_2}^{\epsilon(2)}
		\cdots u_{i_{2n},j_{2n}}^{\epsilon(2n)}
		\big) =
\sum_{p, q \in \cP_2^\epsilon(2n)} 
 \delta_{i, i \circ p} \delta_{j, j \circ q} 
\Wg_N(p,q).
\end{equation}
\end{remark}

\section{Integrals of permuted Haar unitary random matrices}

Let $U_N$ be an $N\times N$ random Haar unitary matrix. As we mentioned in the introduction we will work with its permuted copies. Let us fix 
$N,n\in\mathbb{N}$,
 and fix 
 $\sigma_{1,N},\ldots,\sigma_{2n,N}\in S\left([N]^2\right)$, then for $k=1,2,\ldots,2n$ matrix $U_N^{\sigma_{k,N}}$ is defined as $\left[U_N^{\sigma_{k,N}}\right]_{i,j}=\left[U_N\right]_{\sigma_{k,N}(i,j)}$.
 In order to make the formulas more transparent we suppress $N$ in most of the notation we use, and write 
$\sigma_k $
 instead of 
 $\sigma_{k,N}$,
  for 
  $ k =1,\ldots,k$. 
  Similarly we shall write
   $U$ for $U_N$. One should keep in mind that we work with several permutations of the same matrix $U_N$.

Our main goal in this section is to understand the asymptotic behaviour of integrals of the type
\begin{align*}
\E\circ \tr
\big(
U^{(\sigma_1,\varepsilon_1)}
U^{(\sigma_2,\varepsilon_2)}
\cdots 
U^{(\sigma_{2n},\varepsilon_{2n})}
\big),
\end{align*}
where
 $\varepsilon_s\in\{1,\ast\}$,
 i.e.
 $ \varepsilon_s = \varepsilon(s) $
  for some map 
 $ \varepsilon : \{ 1, 2, \dots, 2n \} \rightarrow \{ 1, \ast \} $,
  and
\begin{align*}
U^{(\sigma_s,\varepsilon_s)}=\begin{cases}
U^{\sigma_s} & \mbox{ if } \varepsilon_s=1,\\
\big(U^{\sigma_s}\big)^\ast &\mbox{ if } \varepsilon_s=\ast.
\end{cases}
\end{align*}
First we will rewrite the above integral in a form which allows us to use the Weingarten calculus.

According to the review of the Weingarten calculus in the previous section, all integrals with odd number of products are zero, similarly we need to have equal number of  $\ast$'s and $1$'s.

Expanding the trace we get
\begin{align}\label{eqn:31}
\E\circ \tr
\big(
U^{(\sigma_1,\varepsilon_1)}U^{(\sigma_2,\varepsilon_2)}
\cdots
 U^{(\sigma_n,\varepsilon_{2n})}
\big)=
\sum_{i_1,\ldots,i_{2n}=1}^N\frac{1}{N}
\E\big(
\big[U^{(\sigma_1,\varepsilon_1)}\big]_{i_1,i_2}
\cdots
\big[U^{(\sigma_{2n},\varepsilon_{2n})}\big]_{i_{2n},i_1}
\big),
\end{align}
where
\begin{align*}
\big[
U^{(\sigma_s,\varepsilon_s)}
\big]_{k,l}
=\begin{cases}
& \left[U^{\sigma_s}\right]_{k,l}=u_{\sigma_s (k,l)} \mbox{ if } \varepsilon_s =1\\
&\\
&\overline{\left[U^{ \sigma_s } \right]}_{l,k}=\overline{u_{\sigma_s (l,k)} } \mbox{ if }
 \varepsilon_s = \ast,
\end{cases}
\end{align*}
for 
$ s = 1, \dots, 2n $.

We need to introduce another bit of notation, for $ s = 1, \dots, 2n $ we will write
\begin{align*}
\big[
   U^{(\sigma_s ,\varepsilon_s)} 
   \big]_{k,l}
   =u^{(\varepsilon_s)}_{\sigma_s \circ \varepsilon_s (k,l)},
\end{align*}
where
\begin{align*}
a^{(\varepsilon_s )}=\begin{cases}
&a \mbox{ if } \varepsilon_s =1,\\
&\overline{a} \mbox{ if } \varepsilon_s =\ast\\
\end{cases}
\end{align*}
and 
\begin{align*}
\varepsilon_s (i,j)=\begin{cases}
&(i,j) \mbox{ if } \varepsilon_s =1,\\
&(j,i) \mbox{ if } \varepsilon_s =\ast.\\
\end{cases}
\end{align*}
Continuing the equation \eqref{eqn:31} we get
\begin{align}\label{eqn:32}
\sum_{i_1,\ldots,i_{2n}=1}^N
\frac{1}{N}
\E\big( &
u^{(\varepsilon_1)}_{\sigma_1\circ\varepsilon_1(i_1,i_2)} 
\cdots u^{(\varepsilon_{2n})}_{\sigma_{2n}\circ\varepsilon_{2n}(i_{2n},i_1)}
\big)\\&=
\sum_{\substack{i_1,\ldots,i_{2n}\in\{1,\ldots,N\},\\
				j_1,\ldots,j_{2n}\in\{1,\ldots,N\},\\
				j_k=i_{k+1} \textrm{ for } k=1,\ldots,{2n}-1,\\
				j_{2n}=i_1 
			}
	}^N
	\frac{1}{N}
	\E\big(
	u^{(\varepsilon_1)}_{\sigma_1\circ\varepsilon_1(i_1,j_1)}\cdots u^{(\varepsilon_{2n})}_{\sigma_{2n}\circ\varepsilon_{2n}(i_{2n},j_{2n})}\big).\nonumber
\end{align}

Denoting $(k_m,l_m)=\sigma_m\circ\varepsilon_m(i_m,j_m)$ for $m=1,2,\ldots,2n$ we rewrite \eqref{eqn:32} as 
\begin{align*}
\sum_{\substack{i_1,\ldots,i_{2n}\in\{1,\ldots,N\},\\
		j_1,\ldots,j_{2n}\in\{1,\ldots,N\},\\
		j_k=i_{k+1} \textrm{ for } k=1,\ldots,{2n}-1,\\
		j_{2n}=i_1 
	}
}^N\frac{1}{N}\E\left(u^{(\varepsilon_1)}_{k_1,l_1}\cdots u^{(\varepsilon_{2n})}_{k_{2n},l_{2n}}\right).
\end{align*}

With this notations as in Remark \ref{rem:24} we have
 \begin{align}\label{weigarten}
\E\circ \tr
\big(
U^{(\sigma_1,\varepsilon_1)}U^{(\sigma_2,\varepsilon_2)}
\cdots U^{(\sigma_{2n},\varepsilon_{2n})}
\big)
=
\sum_{p,q\in \mathcal{P}_2^\varepsilon(2n)}
\VN ,
\end{align} 
where
$ \overrightarrow{\sigma} = ( \sigma_1,  \sigma_2, \dots, \sigma_{2n}) $,
$ \varepsilon = (\varepsilon_1, \dots, \varepsilon_{2n}) $ 
and
\begin{align*}
\VN = \Wg_N (p, q)
\cdot\frac{1}{N}
| \AN|,
\end{align*}
and 
$ \AN $ 
is  the set 
\begin{align*}
\AN = \big\{
(i_1, j_1, \dots, i_{2n}, j_{2n}) \in [ N ]^{4n}: & 
\ i_1= j_{2n}, i_{s+1} = j_s
\textrm{ for each } s \in [ {2n} -1], \\
 &  \hspace { 1.4 cm } \textrm { and }
  k_s = k_{p(s)}, l_s = l_{ q ( s ) } , \textrm { for each } s \in [ {2n} ] \big\}
\end{align*}
and
$ \Wg_N (p, q) $ 
is the value of the unitary Weingarten function as reviewed in Remark \ref{rem:24}.

In the remaining part of this section we will study  the asymptotic behavior of the quantity 
$ \VN $.
The main result is summarized in the Proposition below. Next we spend most of this section on proving it.

\begin{proposition}\label{prop:31}
	For any $n>0$, and any sequences $(\sigma_s)_{s=1,2,\ldots,2n},(\varepsilon_s)_{s=1,2,\ldots,2n}$, we have that:
	
	$1^\mathrm{o}$ For any $p,q\in\mathcal{P}_2^{\varepsilon}(2n)$
	 we have
	 $ \VN = O( 1 )$ 
	as
	$ N \to  \infty $.
	
	$2^\mathrm{o}$ Suppose $ p, q \in \cP_2^{\epsilon}(2n) $. 
	Either 
	$\VN = O(N^{-1})$,
	 or for any block of 
	$p \vee q $
	of the form
	$\{ a_1, a_2, \dots, a_m \}$
	where
	$ a_1 < a_2 < \dots < a_m $, and where we identify
	$ a_{ m + 1} = a_1 $, 
	we have that 
	for each
	$ s \in [ m ] $,
	  \begin{align} \label{eq:321}
	\{ p(a_s), q(a_s) \} = \{ a_{  s - 1 }, a_{ s + 1 } \},
	\end{align}
	In particular when $(\ref{eq:321})$ holds we have
	$ \varepsilon_{ a_s} \neq 
	\varepsilon_{ a_{ s + 1 } } $
	for all
	$ s =1, 2, \dots, m $.
	
	$3^{\mathrm{o}}$ 	If 
	$p \vee q$
	is crossing then
	$\VN= O(N^{-1})$.

\end{proposition}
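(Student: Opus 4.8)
The plan is to analyze the three quantities that make up $\VN$ -- the Weingarten factor $\Wg_N(p,q)$, the normalizing factor $\frac 1N$, and the cardinality $|\AN|$ -- and to track the power of $N$ each contributes. We know from Remark \ref{remark:2.5} that $\Wg_N(p,q) = N^{-2n + |p\vee q|}\,C_{p,q} + O(N^{-2n+|p\vee q|-2})$, so the whole proposition reduces to obtaining a matching bound $|\AN| = O(N^{2n + 1 - |p\vee q|})$ for part $1^{\mathrm o}$, and identifying exactly when this bound is saturated for parts $2^{\mathrm o}$ and $3^{\mathrm o}$.

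For part $1^{\mathrm o}$, I would estimate $|\AN|$ by viewing the defining conditions as a system of constraints on the free index vector $(i_1,j_1,\dots,i_{2n},j_{2n}) \in [N]^{4n}$. The cyclic conditions $i_1 = j_{2n}$ and $i_{s+1} = j_s$ cut the number of free indices roughly in half (these are the constraints coming from expanding the trace), leaving $2n$ truly free indices, say $i_1,\dots,i_{2n}$, with $j_s = i_{s+1}$; this accounts for the leading $N^{2n}$ and the normalizing $\frac 1N$. On top of this the pairing conditions $k_s = k_{p(s)}$ and $l_s = l_{q(s)}$, after translating through the relation $(k_m,l_m) = \sigma_m\circ\varepsilon_m(i_m,j_m)$, impose further equalities among the $i$'s. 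The key combinatorial point is that the blocks of $p\vee q$ organize these equalities: as Remark \ref{remark:2.5} shows, following alternately $p$ and $q$ walks through a block, and each such step forces one index to equal a $\sigma$-shifted version of another. Counting how many of the $2n$ free indices survive gives $|\AN| \le N^{2n - |p\vee q| + 1}\cdot (\text{const})$ because each block of $p\vee q$ beyond a spanning-tree count kills one degree of freedom; combined with the Weingarten estimate this yields $\VN = O(1)$.

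For parts $2^{\mathrm o}$ and $3^{\mathrm o}$ I would examine precisely when the above counting is tight. Equality $|\AN| = \Theta(N^{2n - |p\vee q| + 1})$ requires that within each block of $p\vee q$ the successive $p$- and $q$-steps produce only ``trivial'' constraints -- that is, the permutations $\sigma_s$ on consecutive entries must cancel against each other along the block rather than producing genuinely new equations or, worse, contradictory ones. Writing out a block $\{a_1 < \dots < a_m\}$ and chasing the alternating $p$/$q$-images, one finds that non-degeneracy of the contribution forces $\{p(a_s), q(a_s)\} = \{a_{s-1}, a_{s+1}\}$ for every $s$ (Equation \eqref{eq:321}); otherwise the block ``folds back'' on itself, creating an extra independent constraint and dropping the order by at least $N^{-1}$. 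The parity statement $\varepsilon_{a_s} \ne \varepsilon_{a_{s+1}}$ then follows because $p, q \in \cP_2^\varepsilon(2n)$ pair indices of opposite $\varepsilon$-type and consecutive elements of a block are related by $p$ or $q$. Finally, part $3^{\mathrm o}$: if $p\vee q$ is crossing then some block has elements interleaved with another block's, and the cyclic trace structure $j_s = i_{s+1}$ together with \eqref{eq:321} is incompatible with such interleaving -- the crossing forces at least one genuine extra index identification, so $\VN = O(N^{-1})$.

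The main obstacle I anticipate is the bookkeeping in part $2^{\mathrm o}$: carefully setting up the walk through a block of $p\vee q$ in terms of the original trace indices $i_s$ (rather than the re-indexed $k_s, l_s$), keeping track of how the maps $\sigma_m$ and the transposition action $\varepsilon_m$ interleave, and showing rigorously that any deviation from \eqref{eq:321} introduces an independent linear constraint over the index set. One has to argue that the relevant ``constraint graph'' on $\{i_1,\dots,i_{2n}\}$ has exactly $2n - |p\vee q| + (\text{number of connected components})$ edges that are non-redundant precisely in the good case, and one extra non-redundant edge otherwise; this is where a clean spanning-tree / cycle-rank argument is needed. Parts $1^{\mathrm o}$ and $3^{\mathrm o}$ should then follow as corollaries of this analysis, with $3^{\mathrm o}$ using the standard fact that a crossing pairing cannot satisfy the ``interval-like'' structure that \eqref{eq:321} imposes relative to the cyclic order.
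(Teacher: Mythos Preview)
Your high-level strategy --- match the $N^{-2n+|p\vee q|}$ from the Weingarten asymptotics against a bound $|\AN|\le N^{2n+1-|p\vee q|}$, and then analyze when that bound is sharp --- is exactly right and is what the paper does. The execution, however, differs in an important way.

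You propose a global ``constraint graph / spanning-tree'' count on the indices $i_1,\dots,i_{2n}$. The difficulty is that the constraints $k_s=k_{p(s)}$, $l_s=l_{q(s)}$ are \emph{not} linear equalities among the $i$'s: they read $\pi_1\big(\sigma_s\circ\varepsilon_s(i_s,i_{s+1})\big)=\pi_1\big(\sigma_{p(s)}\circ\varepsilon_{p(s)}(i_{p(s)},i_{p(s)+1})\big)$ for arbitrary bijections $\sigma_s$. A cycle-rank argument on a graph whose edges are such relations does not immediately give an upper bound on the solution set, because there is no underlying linear or group structure to exploit. The paper sidesteps this by a \emph{sequential} device: it introduces $\FNm{S}$, the number of partial tuples $(i_s,j_s)_{s\in S}$ extendable to a full element of $\AN$, and proves a lemma (Lemma \ref{fpq:1}) controlling how $\FNm{S}$ grows when one adjoins a single index to $S$. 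The point is that adding $m+1$ costs at most a factor $N$ (since $i_{m+1}=j_m$ is forced and $j_{m+1}$ is free), and costs nothing if the rest of the $p\vee q$-block containing $m+1$ already lies in $S$ (since then both $k_{m+1}$ and $l_{m+1}$ are pinned, hence so is $(i_{m+1},j_{m+1})$ via $\sigma_{m+1}^{-1}$). Iterating from $S=\{1\}$ to $S=[2n]$ gives the bound for $1^{\mathrm o}$ and provides the scaffolding for $2^{\mathrm o}$ and $3^{\mathrm o}$.

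For $2^{\mathrm o}$ the paper does not argue by ``tightness of a spanning tree'' but by a cyclic-permutation trick: if \eqref{eq:321} fails at some $a_t$, rotate so that $a_{t+1}=2n$; then both $p(a_t)$ and $q(a_t)$ lie strictly below $a_t$, so $(i_{a_t},j_{a_t})$ is completely determined by earlier indices even though $a_t$ is not the maximum of its block. This is the concrete mechanism producing the extra factor $N^{-1}$, and it is not visible from a global edge-count.

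Finally, there is a genuine gap in your sketch of $3^{\mathrm o}$. You suggest that ``a crossing pairing cannot satisfy the interval-like structure that \eqref{eq:321} imposes relative to the cyclic order.'' This is false: take $n=2$, $\varepsilon=(1,1,\ast,\ast)$ and $p=q=(1,3)(2,4)$; then $p\vee q=\{\{1,3\},\{2,4\}\}$ is crossing yet \eqref{eq:321} holds trivially on each two-element block. So $2^{\mathrm o}$ does not by itself imply $3^{\mathrm o}$. The paper's argument for $3^{\mathrm o}$ is an independent and rather delicate computation: given a crossing $a<b<c<d$ with $a,c$ in one block and $b,d$ in another, it bounds $\FNm{[b-1]\cup\{c\}}$, extends down to $\FNm{[c]\setminus\{b\}}$, and then uses the trace relations $i_b=j_{b-1}$, $j_b=i_{b+1}$ to insert $b$ at no cost --- the saving of one power of $N$ comes precisely from this last step, not from any failure of \eqref{eq:321}. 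Your plan would need to supply an argument of this kind.
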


 	Before we proceed with proofs of all parts of the above Proposition we need to prove a technical lemma, for which we need additional notation.
 	
 	For $S\subseteq \{1,\ldots,2n\}$  define
\begin{align*}
\ANm{S}&=\{(i_s,j_s)_{s\in S}; \mathrm{\ there\ exists\ } (i_{r},j_{r})_{r\notin S} \mbox{ such that } (i_1,j_1,\ldots,i_{2n},j_{2n})\in\AN\},\\
\FNm{S}&=|\ANm{S}|.
\end{align*}

	That is for a fixed subset $S$ of $[2n]$ the number	$\FNm{S}$
	  counts the subsequences with indices picked according to the set $S$, that can be continued to a full sequence in 
	  $\AN$.
	   We will write
	    $\ANm{m}$ and $\FNm{m}$ 
	    for 
	    $\ANm{[m]}$ and $\FNm{[m]}$
	     respectively. Obviously we have
	      $\ANm{2n}=\AN$.

\begin{lemma}\label{fpq:1}
 With the notations above, we have that:
	\begin{enumerate}
		\item[(1)]  If 
		$ m \in S $,
		then
		\begin{align*}
		\FNm{ S \cup  \{ m + 1 \} } & \leq N \FNm{ S }, \\
		\FNm{ S \cup \{ m - 1 \}  } & \leq N \FNm{ S }.
		\end{align*}
		\item[(2)] If $ B $ is a block of 
		$ p \vee q  $,
		$ m  \in B $ 
		and 
		$ B \setminus \{ m \} \subseteq S $, then
		\begin{align*}
		\FNm{ S \cup \{ m \} } = \FNm{ S } .
		\end{align*}	
		\item[(3)] If 
		 $ m, m^\prime \geq 1 $
		  are such that
		    $ m + m^\prime \leq 2n $, 
		    then
		\begin{align*}
\FNm{ m  + m^\prime }\leq\FNm{ m  } N^{  m^\prime -|\{B\in p \vee q \,\mid\, m < \max( B ) \leq  m+ m^\prime \}|},
		\end{align*}
		where
		 $ \max(S) $
		  denotes the largest element of the set $ S $.
		  \item[(4)] If $S$ is an interval, that is for some $s,m>0$ it is of the form
		  $ S =\{s, s+1, \dots, s+m \} $  
		  and moreover we have
		  \begin{align}\label{lemma:3.2.4}
		  \FNm{S} = o\big( N^{  | S | + 1  -|\{B\in p \vee q \,\mid\, 
		  			B \subseteq S  \}|} \big) 
		  \end{align}
		  then
		  $ \FNm{2n} = o( N^{ 2n + 1 - |p \vee q | } ) $
		\end{enumerate}
\end{lemma}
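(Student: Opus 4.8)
\textbf{Proof plan for Lemma \ref{fpq:1}.}

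The plan is to prove the four parts in sequence, since each later part leans on the earlier ones. For part (1), the idea is that adding one new coordinate index $m+1$ to an already-counted subsequence costs at most a factor of $N$: given a partial configuration in $\ANm{S}$ with $m \in S$, the constraint $j_m = i_{m+1}$ (or $j_{m+1} = i_m$ if we are adding $m-1$, via the cyclic relations $i_1 = j_{2n}$, $i_{s+1}=j_s$) already determines one of the two coordinates of the new pair, so we gain at most one free index ranging over $[N]$. This is essentially a counting inequality: the map $\ANm{S \cup \{m+1\}} \to \ANm{S}$ that forgets the $(i_{m+1},j_{m+1})$-pair has fibers of size at most $N$, because $i_{m+1} = j_m$ is forced and only $j_{m+1}$ is free. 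For part (2), if $B$ is a block of $p \vee q$ and every element of $B$ except $m$ is already in $S$, then the pairing constraints $k_s = k_{p(s)}$, $l_s = l_{q(s)}$ walk around the block $B$ (using the description of blocks of $p\vee q$ in Remark \ref{remark:2.5}, Equation \eqref{eqn:partition}) and determine $(i_m,j_m)$ completely from the already-chosen coordinates; hence adding $m$ adds no new freedom and $\FNm{S \cup \{m\}} = \FNm{S}$. The one subtlety is checking that both coordinates $i_m$ and $j_m$ get pinned down — this follows because the block, traversed via alternating applications of $p$ and $q$, connects $m$ to its neighbours on both the $k$-side and the $l$-side, and $(k_m, l_m) = \sigma_m \circ \varepsilon_m(i_m, j_m)$ is a bijection so $(i_m, j_m)$ is recovered from $(k_m, l_m)$.

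Part (3) is the combinatorial heart of the lemma: iterate. Build up the index set from $[m]$ to $[m + m']$ by adjoining $m+1, m+2, \dots, m+m'$ one at a time. Each adjunction of a new index $t$ is of one of two types: either $t = \max(B)$ for the block $B$ of $p\vee q$ containing $t$ — in which case all other elements of $B$ lie in $[t-1] \supseteq$ the current set, so part (2) applies and we pay no factor of $N$ — or $t$ is not the maximum of its block, in which case part (1) gives a factor of at most $N$. The number of the "free" (factor-$N$) steps is $m' - \#\{B \in p\vee q : m < \max(B) \leq m+m'\}$, giving exactly the claimed bound. I would be slightly careful that part (1) is stated for adding $m+1$ with $m$ in $S$, which is always the case here since we adjoin indices in increasing order; and that when $t = \max(B)$ I do have $B \setminus \{t\} \subseteq [t-1]$, which holds precisely because $t$ is the largest element of $B$.

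For part (4): suppose $S = \{s, s+1, \dots, s+m\}$ is an interval with $\FNm{S} = o(N^{|S| + 1 - |\{B \in p\vee q : B \subseteq S\}|})$. The strategy is to extend $S$ on both sides up to the full set $[2n]$, applying part (3) to the two "tails" $[2n] \setminus S$ split as $\{s+m+1, \dots, 2n\}$ and $\{1, \dots, s-1\}$ (this second piece attaches to $S$ through the cyclic index relation $i_1 = j_{2n}$, so the same argument runs around the cycle). Extending by the $2n - |S|$ missing indices costs a factor $N^{2n - |S| - (\text{number of blocks whose max lies outside } S \text{ and is reached during the extension})}$; every block of $p\vee q$ not entirely contained in $S$ has its maximum picked up exactly once during the full extension, so the exponent gain is $2n - |S| - (|p\vee q| - |\{B : B \subseteq S\}|)$. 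Multiplying by the hypothesis $\FNm{S} = o(N^{|S| + 1 - |\{B : B \subseteq S\}|})$ yields $\FNm{2n} = o(N^{2n + 1 - |p\vee q|})$, as desired. The main obstacle — and the place to be most careful — is the bookkeeping of exactly which blocks contribute a factor of $N$ versus which are "absorbed" by part (2) when the extension wraps around the cyclic trace structure; one must check that no block is double-counted and none is missed, i.e. that the partition of $[2n]$ into $S$ and its complement interacts cleanly with the block structure of $p \vee q$ under the cyclic relations. Once that accounting is pinned down, parts (3) and (4) are immediate consequences of (1) and (2).
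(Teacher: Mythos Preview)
Your plan is correct and matches the paper's approach essentially step for step in parts (1)--(3); the paper likewise proves (3) by reducing to $m'=1$ and splitting on whether $m+1$ is the maximum of its block.

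For part (4) there is one small bookkeeping slip worth flagging. You phrase the savings as ``every block not contained in $S$ has its \emph{maximum} picked up during the extension'', but when you extend to the left (or wrap cyclically), a block $B \not\subseteq S$ can have $\max(B) \in S$ already --- e.g.\ $B=\{2,4\}$ with $S=\{3,4,5\}$ --- so its maximum is never picked up. The correct invariant, and what the paper uses, is: at each one-step extension $S \to S' = S \cup \{t\}$, check whether $t$ is the \emph{last} element of its block to be added, i.e.\ whether $|\{B : B \subseteq S'\}| = |\{B : B \subseteq S\}| + 1$; apply (2) if so and (1) otherwise. With that replacement your count $c_S + d = |p\vee q|$ goes through exactly, and in fact no cyclic wrap-around is needed --- the relations $j_{t-1}=i_t$ already handle leftward extension, so you can simply grow the interval linearly in both directions until it is $[2n]$. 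This is precisely how the paper argues, and once that accounting is in place your (4) is complete.
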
	      

\begin{proof}
Part (1):  Suppose 
 $ m \in S $, 
 then
 $ i_m, j_m \in \ANm{S} $
 so in order to extend the sequence to 
 $ \ANm{S \cup \{ m + 1 \} } $,
 at most we need to add
 $ i_{ m  + 1 } $ and $ j_{ m + 1 } $.
 Since 
 $i_{m+1}=j_m$
 by definition of $\AN $, we have at most $N$ choices for $ j_{m+1} $. 
 The argument for the second inequality is analogous.
 
Part (2). We have that 
 $ ( i_m, j_m ) = ( \sigma_m \circ \varepsilon_m )^{ -1} ( k_m, l_m ) $.
 Since
 $ m \in B $,
 there exist some
 $ a, b \in B \setminus \{ m \} $
 such that 
 $ m = p( a) = q( b) $,
 so
 $ k_m = k_a $ 
 and
 $ l_m =l_b $.
 Hence
 $ ( i_m, j_m) $
 is determined by 
 $ \big\{ ( k_s, l_s ): s \in B \setminus \{ m \} \big\} $, 
 and hence  is determined by
 $ \big\{ (i_s, j_s ): s \in S \big\}. $
 
 For part (3), it suffices to prove the result for
  $ m^\prime = 1 $
  and the general case will follow by iteration.
  Denote by 
  $ B_1 $
   the block of
    $ p \vee q  $
    that contains 
    $ m + 1 $.
 If 
 $ m + 1 $
  is the largest element in 
  $ B _1 $
  then 
   $ \{ B\in p \vee q \mid m < \max (B) \leq m+ 1 \} = \{ B_1 \} $
 and
  $ B_1 \setminus \{ m + 1 \} \subseteq [ m ] $. 
  So part (2) gives
   \begin{multline*}
   \FNm{m + 1} = \FNm{ [ m ] \cup \{ m + 1 \} } = \FNm{ m } \\
   = \FNm{ m } \cdot N^{ 1 - | \{ B \in  p \vee q \mid m < \max (B) \leq m+ 1 \}| } .
   \end{multline*}
  
  If
  $ m + 1 $
  is not the largest element in 
  $ B_1 $, 
  then
   $  \{ B \in p \vee q \mid  m < \max (B ) \leq m + 1 \}
    = \emptyset $
    and part (1) gives
     \begin{align*}
     \FNm{ m + 1} 
     = \FNm{ [ m ] \cup \{ m + 1 \} }
     \leq 
     \FNm{ m } \cdot N^{ 1 - | \{ B\in p \vee q \mid m < \max (B) \leq m+ 1 \}| } .
     \end{align*}

     Proof of Part (4), we will expand the interval by adding one element such that the new set is still an interval. Thus we define $S'=S\cup \{m\}$ or $S'=S\cup \{s+m+1\}$. We shall show that inequality \eqref{lemma:3.2.4} holds for $S'$, then iteration until we reach full interval $[2n]$ will show (4).
     
     Assume that $S'=S\cup \{m\}$, the case $S'=S\cup \{s+m+1\}$ is similar. Then we have two possible cases:
     \begin{itemize}
     	\item $|\{B\in p \vee q : \ 
     	B \subseteq S'  \}|=|\{B\in p \vee q : \ 
     	B \subseteq S  \}|+1$ and of course $|S'|=|S|+1$, so  there is a block containing $m$ which is a subset of $S'$. Then by (2) of this lemma we have $\FNm{S'}=\FNm{S}$ and \eqref{lemma:3.2.4} is satisfied. 
     	\item $|\{B\in p \vee q : \ 
     	B \subseteq S'  \}|=|\{B\in p \vee q : \ 
     	B \subseteq S  \}|$ and $|S'|=|S|+1$ and \eqref{lemma:3.2.4} follows form point (1) of the lemma.
     \end{itemize}  
\end{proof}

\begin{proof}[Proof of $1^\mathrm{o}$ of Proposition \ref{prop:31}]
	Since 
	$ \FNm{1} \leq N^2 $, the inequality from Lemma \ref{fpq:1} (3) also gives that
	for 
	$ 1 \leq m \leq n $,
	\begin{align}\label{eq:fnm}
	\FNm{m} \leq  N^{ m + 1-| \{ B \in p \vee q \,\mid\,  \max(B) \leq m   \}|}.
	\end{align}
	
	In particular,
	$\FN \leq N^{ n + 1 - | p \vee q|} $.
	On the other hand,
	$\VN=\tfrac{1}{N}\Wg_N(p,q)\FN$ 
	and
	$\Wg_N(p,q)=O(N^{-n+ | p \vee q |})$.
		
\end{proof}
Next we proceed with a proof of $2^\mathrm{o}$ of Proposition \ref{prop:31}. For convenience we state it as a lemma.
\begin{lemma}\label{lemma:c2}
 Suppose that 
$ p, q \in \cP_2^{\epsilon}(n) $ 
and 
$ p \vee q $
has the block
$( a_1, a_2, \dots, a_m )$
with
$ a_1 < a_2 < \dots < a_m $ with the identification 
 $ a_{ m + 1} = a_1 $.
Then, either $ \VN = O(N^{-1}) $, or relation $(\ref{eq:321}) $ 
holds true 
 for each
 $ s \in [ m ] $ 
i.e. 
  $
 \{ p(a_s), q(a_s) \} = \{ a_{  s - 1 }, a_{ s + 1 } \}.
 $

   In particular when $(\ref{eq:321})$ holds then
 $ \varepsilon_{ a_s} \neq \varepsilon_{ a_{ s + 1 } } $
  for all
   $ s =1, 2, \dots, m $.
\end{lemma}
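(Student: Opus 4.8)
The plan is to push everything through the Weingarten expansion \eqref{weigarten} and reduce to a counting estimate on $\AN$. Since $\VN=\frac1N\Wg_N(p,q)\,\FN$ with $\Wg_N(p,q)=O(N^{-2n+|p\vee q|})$ and $\FN\le N^{2n+1-|p\vee q|}$ (the bound from the proof of $1^\mathrm{o}$), it is enough to prove: \emph{if \eqref{eq:321} fails for some block of $p\vee q$, then $\FN=O(N^{2n-|p\vee q|})$}, i.e.\ one power of $N$ better than the generic bound.

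First I would set up a dictionary. By Remark \ref{remark:2.5} the elements of a block $B=\{a_1<\dots<a_m\}$ of $p\vee q$ form one cycle whose edges alternate between pairs of $p$ and pairs of $q$, and \eqref{eq:321} holds for $B$ exactly when this cycle is the ``increasing polygon'' $a_1-a_2-\dots-a_m-a_1$. I would also sharpen the estimate behind $1^\mathrm{o}$: bounding $\FN$ by building up from $\FNm{1}\le N^{2}$, adjoining one index at a time (Lemma \ref{fpq:1}(1)), the count grows by a factor $\le N$ at each step and \emph{does not grow} whenever the adjoined index $r$ has both its cycle--neighbours $p(r),q(r)$ already present — for then $(k_r,l_r)$, hence $(i_r,j_r)$, is forced; this is Lemma \ref{fpq:1}(2), whose proof really only uses $p(r),q(r)\in S$. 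Calling such an $r$ a \emph{local maximum} of the cycle of its block, one gets $\FN\le N^{2n+1-g}$ with $g=\sum_{B}\#\{\text{local maxima of the cycle of }B\}$; since each block contributes at least its numerical maximum, $g\ge|p\vee q|$, which is $1^\mathrm{o}$. The goal is to produce one further local maximum.

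Now suppose \eqref{eq:321} fails for $B$. A cyclic word has $\ge 2$ local maxima unless it is unimodal, and the increasing polygon is unimodal but not conversely, so there are two cases. \textbf{(a)} If the cycle of $B$ is not unimodal it has at least two local maxima (automatically $\ge 2$, since $\min B$ is a local minimum), so $g\ge|p\vee q|+1$ and $\FN\le N^{2n-|p\vee q|}$ outright. \textbf{(b)} If the cycle of $B$ is unimodal but not the increasing polygon, then both the ascending and the descending arc from $\min B$ to $\max B$ must contain an interior vertex, so the two cycle--neighbours $u,z$ of $a_m:=\max B$ both lie in $\{a_2,\dots,a_{m-1}\}$; in particular $b:=\min(u,z)$ satisfies $a_1<b<a_m$. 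I would take the interval $S=\{b,b+1,\dots,a_m\}$ (so $S\ne[2n]$ and $B\not\subseteq S$), build $\FNm{S}$ up from $\FNm{\{b\}}\le N^2$, and count non--growing steps: the numerical maximum of every block contained in $S$ is one (its cycle--neighbours lie in that block, hence in $S$, and are smaller), and $a_m$ is an \emph{additional} one (its cycle--neighbours $u,z$ lie in $[b,a_m-1]\subseteq S$, while $a_m$ is the maximum of no block contained in $S$). Hence $\FNm{S}\le N^{|S|+1-(|\{B''\subseteq S\}|+1)}$, one below the generic bound of Lemma \ref{fpq:1}(1)--(3), and Lemma \ref{fpq:1}(4) (whose proof carries the exponent deficit through each one--index extension) upgrades this to $\FN=O(N^{2n-|p\vee q|})$.

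Finally, when \eqref{eq:321} does hold, for each $s$ one of $p(a_s),q(a_s)$ equals $a_{s+1}$, so $\{a_s,a_{s+1}\}$ is a pair of $p$ or of $q$; as $p,q\in\cP_2^{\varepsilon}(2n)$ every such pair joins indices with opposite $\varepsilon$, giving $\varepsilon_{a_s}\neq\varepsilon_{a_{s+1}}$ for all $s$. The step I expect to be the main obstacle is case \textbf{(b)}: extracting from ``unimodal but not the increasing polygon'' the precise fact that both cycle--neighbours of $\max B$ exceed $\min B$ (so that $S$ is a proper interval missing $B$), and — because $B$ itself straddles the boundary of $S$ — checking rigorously that the extra non--growing step at $a_m$ is not already charged to the blocks contained in $S$, so that $\FNm{S}$ genuinely beats the generic bound by one power of $N$.
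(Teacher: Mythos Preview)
Your argument is correct, and the underlying engine is the same as the paper's: if \eqref{eq:321} fails, exhibit an index in the block whose $p$- and $q$-neighbours are both already ``to the left'', so that adjoining it costs no factor of $N$ beyond the one already charged to the block's numerical maximum, yielding $\FN\le N^{2n-|p\vee q|}$.

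Where you and the paper differ is organisation. You bifurcate into (a) a non-unimodal cycle (two local maxima, done) and (b) a unimodal cycle that is not the increasing polygon, and in (b) you work on the proper interval $S=[b,a_m]$ to produce an extra non-growing step at $a_m$ not attributable to any block contained in $S$. The paper instead observes that \eqref{eq:321} is equivalent to the simpler condition $a_{s+1}\in\{p(a_s),q(a_s)\}$ for all $s$; if it fails at $t$, a single \emph{cyclic shift} of $[2n]$ sending $a_{t+1}$ to $2n$ turns the offending $a_t$ into an element with both $p$- and $q$-images strictly smaller than itself while not being the block maximum. This collapses your case split: after the shift one is always in your case (a), and the bound $\FNm{a_t}=\FNm{a_t-1}$ together with Lemma~\ref{fpq:1}(3) finishes directly, with no need for the interval construction or the strengthened version of Lemma~\ref{fpq:1}(2). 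So your route works and your diagnosis of the delicate point (both cycle-neighbours of $\max B$ exceeding $\min B$, and the extra step at $a_m$ not being double-counted) is accurate; the paper's cyclic-shift trick simply makes that delicate point evaporate.
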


\begin{proof}

 First, remark that (\ref{eq:321}) is equivalent to each 
 $ s \in [ m ] $ 
 satisfies the following property (again, we identify 
 $ a_{m+1} = a_1 $):
  \begin{align}\label{eq:c2:02}
   a_{s+1} \in \{ p(a_s), q(a_s)\}.
  \end{align}
  That (\ref{eq:321}) implies (\ref{eq:c2:02}) is obvious. For the converse, if 
  $ a_s = p (a_{s-1} ) $, 
  then, since $ p $ is a pairing,
   $ p(a_s) = a{s-1} $. 
   Then, since
    $ a_{s+1}\in \{ p(a_s), q (a_s)\} $,
    we get
    $ a_{s+1} = q(a_s) $,
    that is (\ref{eq:c2:02}). 
    The case 
     $ a_s = q (a_{s-1}) $ 
     is similar.

 To prove relation \eqref{eq:c2:02}, suppose that  is not satisfied by some 
 $ t \in [ m ] $.
  Via a circular permutation, we can suppose that
  $ 2n = a_{t + 1 } $. 
  Since (\ref{eq:c2:02}) does not hold true for  $ t $, 
  we get that both
  $ p(a_t) $ and $ q ( a_t ) $ 
  are strictly less than $ a_t $.
   Then (\ref{eq:fnm}) gives that
 \begin{align*}
  \FNm{a_t - 1 } \leq  N^{ a_t - | \{ B \in p \vee q \mid : \  \max(B) \leq a_t-1  \}|}.
  \end{align*}
 Since
 $ k_{a_t} = k_{p(a_t)} $ 
 and 
 $ l_{a_t } = l_{ q( a_ t ) }$
 and
 $ p(a_t ) , q ( a_t ) < a_t $,
 we have that 
 $ ( i_{a_t}, j_{ a_t } ) $  
 is uniquely determined by
 $ \{ ( i_s, j_s ): s < a_t \} $,
   i.e.
   $\FNm{ a_t }= \FNm{ a_t - 1 }. $
 
  Furthermore, Lemma \ref{fpq:1}(3) gives
  \begin{align*}
  \FNm{2n} &\leq \FNm{a_t} \cdot 
  N^{ 2n- a_t - | \{  B \in p \vee q \mid : \ a_t < \max(B) < 2n  \} | }
  \\
  &\leq N^{ a_t - | \{ B \in p \vee q \mid : \  \max(B) \leq a_t-1  \}|}
   \cdot 
  N^{ 2n- a_t - | \{  B \in p \vee q \mid : \   a_t < \max(B) \leq 2n  \}  | }
  \\
  & \leq
  N^{ 2n - |\{ B \in p \vee q \mid :\   \max(B) \neq a_t \} | }.
  \end{align*}
  But
   $ a_{t+1} = 2n > a_t $,
   so 
   $ a_t $
   is not the largest element in any block of 
   $ p \vee q $,
   therefore
   $ \FNm{2n} \leq N ^{ 2n - | p \vee q | } $, 
   hence
   $ \VN = O( N^{-1} ) $.

   \end{proof}

Finally we proceed with the proof of $3^\mathrm{o}$ from Proposition \ref{prop:31}.
\begin{lemma} \label{lemma:c1}
	With the notations as above,
	if 
	$ p \vee q $
	 is crossing then
	  $ \displaystyle \lim_{N\to\infty}\VN = 0 $.
\end{lemma}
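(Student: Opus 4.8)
The plan is to show that a crossing $p\vee q$ forces $\FNm{2n}$ to be of order strictly below the generic bound $\FNm{2n}\le N^{2n+1-|p\vee q|}$ obtained in part $1^\mathrm{o}$. Indeed, from $\VN=\Wg_N(p,q)\cdot\tfrac1N\FNm{2n}$ and the expansion $\Wg_N(p,q)=C_{p,q}N^{|p\vee q|-2n}+O(N^{|p\vee q|-2n-2})$ of Remark \ref{remark:2.5}, the estimate $\FNm{2n}=o(N^{2n+1-|p\vee q|})$ (using the crude bound for the remainder term) immediately gives $\VN=o(1)$, which is the claim.

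First I would dispose of the trivial alternative: if $\VN=O(N^{-1})$ there is nothing to prove, so assume not. Then part $2^\mathrm{o}$ (Lemma \ref{lemma:c2}) applies and the cyclic relation \eqref{eq:321} holds on every block of $p\vee q$. In particular, inside each block $\{a_1<\dots<a_m\}$ the $p$-pairs and $q$-pairs join cyclically consecutive block-elements, so, drawn as a chord diagram on $[2n]$, every block is internally non-crossing. Since $p\vee q$ is crossing, two distinct blocks $B\neq B'$ interleave on $[2n]$; because each is internally non-crossing, their interleaving is witnessed by two crossing chords, one a $p$- or $q$-pair $\{c,c'\}$ lying in $B$ and the other a $p$- or $q$-pair $\{d,d'\}$ lying in $B'$, and after possibly interchanging the two pairs we may arrange $c<d<c'<d'$ (note $c'\ge 3$).

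The core of the argument is to select an interval adapted to this crossing — for instance $S=\{1,2,\dots,c'-1\}$, which contains $c$ and $d$ but neither $c'$ nor $d'$, so that both $B$ and $B'$ straddle the right endpoint of $S$ (choosing $S$ correctly in full generality is itself part of the work, but this is the natural candidate) — and to prove the strengthened count
\[
\FNm{S}=o\big(N^{\,|S|+1-|\{B''\in p\vee q\ :\ B''\subseteq S\}|}\big),
\]
one power of $N$ below \eqref{eq:fnm}. Granting this, Lemma \ref{fpq:1}(4) at once yields $\FNm{2n}=o(N^{2n+1-|p\vee q|})$ and we finish as in the first paragraph. To obtain the strengthened count I would run the element-by-element build-up of $\ANm{S}$ as in Lemma \ref{fpq:1}, now exploiting that a tuple counted by $\FNm{S}$ must be extendable to a full element of $\AN$: extendability couples the coordinates carried by indices in $S$ to those outside through the two pairs of $B$ (and the two pairs of $B'$) that straddle the endpoint $c'-1$, together with the cyclic adjacency relations $i_{s+1}=j_s$ and $i_1=j_{2n}$; because the chord $\{c,c'\}$ crosses the chord $\{d,d'\}$, these two systems of couplings interlock, and the net effect is that one more $S$-coordinate is pinned down as a function of the remaining ones than the block count $|\{B''\subseteq S\}|$ accounts for, which is exactly the saved power of $N$.

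The main obstacle is precisely this strengthened estimate, and what makes it delicate is that it must hold \emph{uniformly over all permutations} $\sigma_1,\dots,\sigma_{2n}$: each $\sigma_s$ is an arbitrary bijection of $[N]^2$, so the conditions $k_s=k_{p(s)}$ and $l_s=l_{q(s)}$ mix the two coordinates of an index in a $\sigma$-dependent way and cannot be unwound by explicit substitution (which is what makes the toy computations with $\sigma_s=\mathrm{id}$ misleadingly easy). The bound therefore has to be extracted from a purely structural degree-of-freedom count along the build-up: one must locate, from the incidence pattern of $p$, $q$, the cyclic adjacency and the interval $S$ alone, a step of the build-up at which the newly added index has both of its partners already present but lying on opposite sides of the interlocked chords, so that — using only that the $\sigma_s$ are bijections, hence transmit the constraint without loss — this step imposes a genuine equation on the previously chosen coordinates instead of merely failing, as in Lemma \ref{fpq:1}(2), to introduce a new degree of freedom. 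Making this bookkeeping rigorous, and checking that a suitable interval always exists, is the technical heart of the proof.
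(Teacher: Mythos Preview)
Your high-level plan is exactly the one the paper uses: assume $\VN$ is not $O(N^{-1})$ so that relation \eqref{eq:321} holds, pick an interval $S$ adapted to a crossing, prove a bound on $\FNm{S}$ one power of $N$ below the generic estimate \eqref{eq:fnm}, and conclude via Lemma~\ref{fpq:1}(4). So the architecture is correct and coincides with the paper.

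The gap is that you explicitly leave the crucial step --- the strengthened count on $\FNm{S}$ --- as ``the technical heart of the proof'' without carrying it out, and the mechanism you describe for saving a power of $N$ (finding an index whose $p$/$q$ partners are both present and lie on opposite sides of the interlocked chords, so that bijectivity of the $\sigma_s$ forces a genuine equation) is not the one that actually works. The paper's trick is more concrete and different in character. With the crossing $a<b<c<d$, $\{a,c\}\subset B_1$, $\{b,d\}\subset B_2$, and $a,c$ consecutive in $B_1$, the paper takes $S=[c]$ but builds it in a \emph{non-sequential} order: first $[b-1]$ via the standard bound; then jump directly to $c$, where \eqref{eq:321} gives $c\in\{p(a),q(a)\}$, so one of $k_c,l_c$ is already fixed and $\FNm{[b-1]\cup\{c\}}\le N\cdot\FNm{b-1}$ (or $=\FNm{b-1}$ if $c=\max B_1$, by Lemma~\ref{fpq:1}(2)); then fill in $c-1,c-2,\dots,b+1$ by the usual one-step bounds; and only at the very end add $b$. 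At that final step $i_b=j_{b-1}$ and $j_b=i_{b+1}$ are both determined by the \emph{adjacency} relations (not by $p$ or $q$), so $\FNm{[c]}=\FNm{[c]\setminus\{b\}}$. Since $d>b$ lies in the same block $B_2$, the index $b$ is not a block-maximum, and this is precisely where the extra power of $N$ is saved. In short: the savings come from the cyclic adjacency pinning $b$ from both neighbours once you have arranged to place $b$ last, not from an over-determination in the $p$/$q$ constraints as you suggest.
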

\begin{proof}

Suppose that 
$ p \vee q $ 
has a crossing
$ (a, b, c, d )$,
i.e. 
$  a < b < c < d $
and there are two distinct block
$ B _1 $ 
and 
$ B _2 $
of 
$ p \vee q $
such that
$ a, c \in B_1 $ 
and 
$ b, d \in B _2 $.
It suffices to show that
\begin{align}\label{eq:c2:01}
\FNm{c}\leq 
 N^{c - | \{ B \in p \vee q \mid :  \  \max ( B ) \leq c \} | }
 = N^{ | [ c ] | - | \{ B \in p \vee q \mid : \ B \subseteq [ c ] \} | } 
\end{align}
and the conclusion follows from Lemma \ref{fpq:1}(4) with $S=[c]$.

Without loss of generality we can assume that 
 $ a $ and $ c $
 are consecutive elements in
  $ B_1 $.

  Applying  (\ref{eq:fnm}) we obtain that
   \begin{align*}
    \FNm{ b  - 1 } \leq  N^{ b - | \{ B \in p \vee q  \mid : \  \max(B) \leq b - 1  \}|}.
    \end{align*}
    
    Next we shall show that
    \begin{align}\label{eq:c2:03}
     \FNm{ [ b  - 1 ] \cup \{ c \}  } \leq  N^{ b + 1 - | \{ B \in p \vee q \mid : \  B \subseteq [ b - 1 ] \cup \{ c \}  \}|}.  
    \end{align}
    
    We shall prove  (\ref{eq:c2:03}) by considering two cases. First, suppose that
    $ c $ 
    is the largest element of the block
     $ B _1 $.
     Since 
     $ a $ 
      and 
      $ c $
      are consecutive elements of 
      $ B_1 $
       and 
       $ a < b $,
       it follows that
       $ B_1 \setminus \{ c \} \subseteq [ b -1 ] $. 
       and Lemma \ref{fpq:1}(2) gives
    \begin{align*} 
    \FNm{[ b -1 ] \cup \{ c \} } = \FNm{ [ b - 1 ] } 
    \leq 
    N^{ b - | \{ B \in p \vee q \mid: \  B \subseteq [ b -1 ] \} | }.
    \end{align*}
    But 
     $ B_1 $ 
     is the only block of 
     $ p \vee q $ 
     not contained in 
     $ [ b -1 ] $
     and contained in
      $ [ b -1] \cup \{ c \} $, 
      so  inequality (\ref{eq:c2:03}) follows.
      
      Next, suppose that 
      $ c $
      is not the largest element of 
      $ B _1 $. 
      Since 
      $ a $
       and
        $ c $
         are assumed to be consecutive elements in
          $ B_1 $,  according to Lemma \ref{lemma:c2} we have that
            $ c \in \{ p(a), q(a)\} $.
    
    If 
    $ c = p(a) $, 
    then
    $ k_a = k_c $.
    So the sequence
     $  (i_s, j_s )_ { s \leq b-1 } $
    uniquely determines $ k_c $.
    Therefore, for the each sequence 
     $  (i_s, j_s )_{s \leq b -1}  \in \ANm{b-1} $
     there are at most 
     $ N $ 
     couples 
     $ ( i_c, j_c ) =
     ( \sigma_c \circ \varepsilon_c )^{-1} (k_c, l_c ) $
     such that
     $  (i_s, j_s )_{ s \in [ b -1 ] \cup \{ c \} } 
     \in  \ANm{ [ b-1] \cup \{ c \} }$. 
     That is
      \begin{align*}
      \FNm{ [ b-1] \cup \{ c \} } \leq N \cdot 
      \FNm{ [ b -1 ]}
      \leq
       N^{ b+ 1 - | \{ b \in p \vee q \mid : \ B \in [ b -1 ] \} | }.
      \end{align*}
   But $ c $ is not the largest element of $ B _1 $, so 
   $ B_1 $
   is not contained in $ [ b -1 ] \cup \{ c \} $, 
   therefore inequality \eqref{eq:c2:03} follows from the relation above. The case 
   $ c = q( a ) $,
   i. e.
   $ l_a = l_c $
   follows from a similar argument. Thus we proved \eqref{eq:c2:03}.
   
   Now we are ready to prove \eqref{eq:c2:01}. Using (\ref{eq:c2:03}) and applying parts (1) and (2) of Lemma \ref{fpq:1} for 
   $ m = c - t  $
   and 
   $ S = [ b - 1] \cup \{ c - t+ 1 , c-t+2, \dots, c \} $, 
   where 
   $ t = 1, 2,  \dots, c - b  - 1 $,
   an induction argument on 
   $ t $
   gives that 
   \begin{align}\label{eq:c2:04}
   \FNm{ [c]\setminus \{ b \} }
    \leq 
 N^{c - | \{ B \in p \vee q \mid : \ B \in [c ] \setminus \{ b \} \} | }.
   \end{align}

 Since 
 $ i_b = j_{ b -1} $
 and 
 $ j_b = i_{b+1} $,
 we have that each tuple 
 $ (i_s, j_s )_{ s \in [ c ] \setminus \{ b \} } $
from 
$ \ANm{[ c ] \setminus \{ b \} } $
has a unique extension to a tuple from
 $ \ANm{[ c ]}$. Therefore
 $ \FNm{[c ]} = \FNm{ [ c ] \setminus \{ b \} } $
 we know that $b$ is in the same block with $d$ so it is not the last element of any block, hence \eqref{eq:c2:01},
 follows from (\ref{eq:c2:04}). 
 
\end{proof}


It is useful to note the asymptotics of $\VN $ for unpermuted matrix $ U_N $ , a special choice of $p$ and $q$ and alternating sequence $(\varepsilon_s)_{s=1,\ldots,2n}$.
\begin{remark}\label{remark:3:1}
	Fix $n>0$ and set
	$ \varepsilon_1 = 1 $
	and 
	$ \varepsilon_s \neq \varepsilon_{ s  + 1 }$
	for all 
	$ s \leq 2n -1 $.
	Consider the pair partitions 
	$ \widetilde{p}_{n} , \widetilde{q}_{n} \in \cP_2^{\varepsilon} ( 2n ) $
	defined by
	$  \widetilde{p}_{n}( 2n ) = 1 $
	and 
	$ \widetilde{p}_n( 2 t ) = 2 t + 1 $
	for 
	$1 \leq  t < n $,
	respectively
	$ \widetilde{q}_n ( 2 t  ) = 2t - 1 $ 
	for
	$ 1 \leq t \leq n $.
	Then
	$ \widetilde{p}_n  \vee \widetilde{q}_{n}  $ 
	has only one block, the entire set 
	$ [ 2n ] $,
	so
	\begin{align*}
	\Wg_N ( \widetilde{p}_n ,\widetilde{q}_n ) = N^{  - 2n + 1} 
	( - 1)^{ n - 1}
	C_{ n - 1 } + O (N^{ -n -1 }).     
	\end{align*}
	
	If, moreover,
	$ \sigma_{ s, N } = \textrm{Id} $
	for all $ s \in [ n ] $, 
	we have that
	\begin{align*}
	k_s = \left\{ 
	\begin{array}{ l l}
	i_s & \textrm{if } s \textrm{ is odd}\\
	j_s &  \textrm{if } s \textrm{ is even}
	\end{array}
	\right., \  \ 
	l_s = \left\{ 
	\begin{array}{ l l}
	j_s & \textrm{if } s \textrm{ is odd}\\
	i_s &  \textrm{if } s \textrm{ is even.}
	\end{array}
	\right.
	\end{align*}   
	Therefore the condition on $k$'s and $l$'s do not bring any new restrictions and we have
	\begin{align*}
	\mathcal{A}_{ \overrightarrow{\sigma}, \varepsilon, N}^{( \widetilde{p}_n , \widetilde{q}_n)} = \big\{
	(i_1, j_1, \dots, i_{2n}, j_{2n}) \in [ N ]^{4n}: & 
	\ i_1= j_{2n}, i_{s+1} = j_s
	\textrm{ for each } s \in [ 2n -1]  
	\big\},
	\end{align*}
	thus $|\mathcal{A}_{ \overrightarrow{\sigma}, \varepsilon, N}^{( \widetilde{p}_n , \widetilde{q}_n)}|=N^{2n}$. 
	This gives
	\begin{align}\label{eq:u:frcm:1}
	\mathcal{V}_{ \overrightarrow{\sigma}, \varepsilon, N}^{( \widetilde{p}_n, \widetilde{q}_n )} = 
	( - 1 )^{ n - 1 }
	C_{n-1} 
	+ O(N^{ -2 } ).
	\end{align}
	Observe that from \ref{rem:Rdiag} we have that alternating cumulants of the length $2n$ of Haar unitary element are equal to the limits of $\mathcal{V}_{ \overrightarrow{\sigma}, \varepsilon, N}^{(\widetilde{p}_n, \widetilde{q}_n )}$, we will use this fact later in the proofs. We also note that it is well known (see \cite{HiaiPetz}) that in the limiting $*$-distribution of $U_N$ the $*$-distribution of a Haar unitary element.
	$\hfill $ $ \square $
\end{remark}    	

 Another consequence of Proposition \ref{prop:31} is the uniform (in $\sigma_N $) asymptotic norm bound of $ U_N^{\sigma_N} $.   It is known that such results do not hold true for other classes, such as Wishart random matrices.
 
 \begin{corollary}\label{cor:36}
 There exist some 
 $ \sqrt{27} \geq M > 0 $ 
 such that for any sequence 
 $ ( \sigma_N )_N $ 
 with  each
 $ \sigma_N $
  a permutation on the set 
 $ [ N ] ^2 $
   we have that
 \begin{align}\label{eq:m:sigma}
 \limsup_{N \rightarrow \infty } \| U_N^{ \sigma_N } \| < M.
 \end{align}
 \end{corollary}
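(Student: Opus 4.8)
The plan is to estimate the even moments of $U_N^{\sigma_N}(U_N^{\sigma_N})^{*}$ uniformly in $\sigma_N$ and then invoke the standard passage from moment bounds to an operator‑norm bound. Since $\|U_N^{\sigma_N}\|^{2}=\|U_N^{\sigma_N}(U_N^{\sigma_N})^{*}\|$ and, for a positive $N\times N$ matrix $B$, one has the deterministic inequality $\|B\|^{n}\le\Tr(B^{n})=N\tr(B^{n})$, it suffices to control
\[
m_n^{(N)}:=\E\circ\tr\big((U_N^{\sigma_N}(U_N^{\sigma_N})^{*})^{n}\big).
\]
This is a special instance of the integral studied in this section: it is \eqref{weigarten} with $\overrightarrow{\sigma}=(\sigma_N,\dots,\sigma_N)$ ($2n$ copies) and $\varepsilon=(1,*,1,*,\dots,1,*)$, so
\[
m_n^{(N)}=\sum_{p,q\in\mathcal{P}_2^{\varepsilon}(2n)}\mathcal{V}^{(p,q)}_{\overrightarrow{\sigma},\varepsilon,N}.
\]

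The key observation is that Proposition~\ref{prop:31} already yields a bound on $\limsup_N m_n^{(N)}$ that is independent of the permutations. Part $1^{\mathrm o}$ gives $\mathcal{V}^{(p,q)}_{\overrightarrow{\sigma},\varepsilon,N}=O(1)$ uniformly in $\sigma_N$; keeping track of constants, the cardinality bound used there is $\sigma_N$‑free and, combined with the leading Weingarten asymptotics, gives $\limsup_N|\mathcal{V}^{(p,q)}_{\overrightarrow{\sigma},\varepsilon,N}|\le|C_{p,q}|$ for every $p,q$, while part $3^{\mathrm o}$ forces this $\limsup$ to be $0$ when $p\vee q$ is crossing and part $2^{\mathrm o}$ shows that the surviving terms have $p\vee q$ non‑crossing, with every block of even size (alternating in $\varepsilon$) and obeying \eqref{eq:321}. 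Hence, uniformly in $\sigma_N$, $\limsup_{N\to\infty}m_n^{(N)}\le\beta_n$, where $\beta_n$ is the sum of $|C_{p,q}|$ over all pairs $p,q\in\mathcal{P}_2^{\varepsilon}(2n)$ with $p\vee q$ non‑crossing and satisfying \eqref{eq:321}. The constant $\sqrt{27}$ now comes from a Fuss--Catalan count of $\beta_n$: the admissible joins $p\vee q$ are exactly the non‑crossing partitions of $[2n]$ all of whose blocks have even size, and there are $\tfrac{1}{2n+1}\binom{3n}{n}\le(27/4)^{n}$ of them; given such a $\pi$, relation \eqref{eq:321} together with $p,q\in\mathcal{P}_2^{\varepsilon}(2n)$ pins down $(p,q)$ on each block up to interchanging the roles of $p$ and $q$ there, so at most $2^{|\pi|}$ pairs produce $p\vee q=\pi$, while $|C_{p,q}|=\prod_{B\in\pi}C_{|B|/2-1}\le 4^{\,n-|\pi|}$. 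Therefore
\[
\beta_n\le\sum_{\pi}2^{|\pi|}\,4^{\,n-|\pi|}=4^{n}\sum_{\pi}2^{-|\pi|}\le 4^{n}\cdot\tfrac12\Big(\tfrac{27}{4}\Big)^{n}\le 27^{n},
\]
the sums running over non‑crossing partitions $\pi$ of $[2n]$ into even blocks; and since a generic such $\pi$ has $\Theta(n)$ blocks, $\sum_{\pi}2^{-|\pi|}$ in fact has exponential rate strictly below $27/4$, so $\beta_n\le C\,(27-\delta)^{n}$ for some absolute $\delta>0$.

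To finish I would combine the deterministic inequality $\|U_N^{\sigma_N}(U_N^{\sigma_N})^{*}\|^{n}\le N\tr\big((U_N^{\sigma_N}(U_N^{\sigma_N})^{*})^{n}\big)$ with the moment estimate and let $n=n(N)\to\infty$ slowly enough that $N^{1/n(N)}\to 1$; taking expectations via Jensen's inequality (or applying Markov's inequality together with a Borel--Cantelli argument, for an almost‑sure statement) yields $\limsup_{N}\|U_N^{\sigma_N}\|^{2}\le 27-\delta$, so the corollary holds with any $M$ between $\sqrt{27-\delta}$ and $\sqrt{27}$, in particular with $M=\sqrt{27}$.

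The hard part is the uniformity needed in this last step. Passing to an operator‑norm bound forces $n(N)\gg\log N$, whereas Proposition~\ref{prop:31} controls $m_n^{(N)}$ only in the regime $N\to\infty$ with $n$ fixed; a crude union bound is hopeless because part $3^{\mathrm o}$ only says that a crossing term is $O(N^{-1})$ and there may be $(n!)^{2}$ terms. The remedy — in the spirit of the genus ('t~Hooft) estimates used for Wigner and Ginibre ensembles — is to upgrade the counting behind Lemmas~\ref{fpq:1}--\ref{lemma:c1} into a genuine expansion: to show that a term for which $\mathcal{F}_N^{(p,q)}\le N^{\,2n+1-|p\vee q|-2g}$ is suppressed by $N^{-2g}$ and that the number of such terms, weighted by $|C_{p,q}|$, is at most $\beta_n\,(Cn^{3})^{g}/g!$. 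Summing over $g$ then gives $m_n^{(N)}\le\beta_n\exp(Cn^{3}/N^{2})\le 2\beta_n$ for $n\le N^{2/3}$, after which the choice $n(N)=\lfloor N^{1/3}\rfloor$ closes the argument. Quantifying how far below $N^{\,2n+1-|p\vee q|}$ the cardinality $\mathcal{F}_N^{(p,q)}$ actually drops, as a function of the crossing structure of $p\vee q$, is the technical obstacle I expect to spend the most effort on.
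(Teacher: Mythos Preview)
Your derivation of the uniform moment bound
\[
\limsup_{N\to\infty}\E\circ\tr\big[(U_N^{\sigma_N}(U_N^{\sigma_N})^\ast)^n\big]\le 27^{\,n}
\]
is essentially the paper's argument: restrict via Proposition~\ref{prop:31} to pairs $(p,q)$ with $p\vee q$ non-crossing and satisfying \eqref{eq:321}, bound each surviving $|\VN|$ by the leading Weingarten coefficient $\prod_{B}C_{|B|/2-1}$, observe that \eqref{eq:321} allows at most two choices of $(p,q)$ on each block of $p\vee q$, and combine the Fuss--Catalan count $\tfrac{1}{2n+1}\binom{3n}{n}<(27/4)^n$ with $\prod_B C_{|B|/2-1}<4^n$. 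The only cosmetic differences are that the paper uses the slightly sharper multiplicity $2^{|\{B:|B|>2\}|}$ (blocks of size $2$ force $p=q$ there) and does not try to squeeze out a $(27-\delta)^n$.

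The substantive divergence is your last two paragraphs. The paper's proof \emph{stops} at the displayed moment inequality and writes ``hence the conclusion'': it does not let $n$ depend on $N$, does not attempt a genus expansion, and does not run a Borel--Cantelli argument. In effect the paper reads the corollary as a bound on the norm of any subsequential limiting $\ast$-distribution, i.e.\ on $\sup_n\bigl(\limsup_N\E\circ\tr[\,\cdot\,]^n\bigr)^{1/n}$, rather than as an almost-sure bound on the random quantity $\|U_N^{\sigma_N}\|$. This reading is consistent with the remark after Lemma~\ref{lemma:cov}, which says that the covariance estimate of Section~6 is what promotes Corollary~\ref{cor:36} to an almost-sure statement. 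So the ``hard part'' you flag---making the $O(N^{-1})$ of Proposition~\ref{prop:31} uniform when $n$ grows with $N$---is a genuine obstacle for the stronger statement you are aiming at, but it is one the paper simply does not confront here; your genus-expansion programme would buy a self-contained random-matrix norm bound, at the cost of work the paper defers (in weaker form) to the variance estimates of Section~6.
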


 \begin{remark}\label{rem:sharp_M}
 Recall that if $c$ is a circular operator then $c^*c$ has the Marchenko-Pastur distribution with parameter $1$, and thus has norm $4$; hence $\|c \| = 2$. Thus Theorem \ref{thm:12}  gives that the sharpest $M$ cannot be less than $2$.  
 \end{remark}
 
 \begin{proof}
 
  Denote by 
   $ NC ( 2n, alt )$ 
   the set of all non-crossing permutations on the set 
   $ [ 2n ] $ 
   such that if
    $ B = \{ a_1, a_2, \dots, a_m \} $
    is a block of such a partition with
    $ a_1 < a_2 < \dots <  a_m $, 
    then
    $ m $ is even and $ a_s $ has different parity from $ a_{s + 1} $ for each $ 1 \leq s \leq m-1 $.  As shown in Proposition \ref{prop:31},
    denoting 
     $ \varepsilon = (1, -1, \dots, 1, -1) $,
     we have that
 \begin{align*}
 \E \circ \tr \big( [ U_N^{\sigma_N} ( U_n^{\sigma_N})^\ast ]^n   \big)  &= 
 \sum_{\substack{  p, q \in P_2^{\epsilon}( 2n )\\ p \vee q \in NC ( 2n,  alt )} } \VN + O( N^{-1} )\\
  & = \sum_{ \pi \in NC (2n, alt) }  \big(  
 \sum_{ \substack{ p, q \in P_2^\epsilon ( 2n ) \\ p \vee q = \pi }} 
 \VN  \big) + O( N^{-1} ).
 \end{align*}

 Let
  $ \pi \in NC ( 2n , alt ) $ 
 and 
 $ p, q \in P_2^\epsilon ( 2n ) $
 be such that 
 $ p \vee q  = \pi $. 
 Since 
  $\VN=\tfrac{1}{N}\Wg_N(p,q)\FN$ 
 and, as seen in the proof of Proposition 3.1, 
 $\FN \leq N^{ n + 1 - | p \vee q|} $, 
 we have that
  $ | \VN | $ 
 is bounded by the absolute value of the leading term in the expansion of 
 $ \Wg_N (p, q) $, 
 that is 
  \begin{align*}
  | \VN | \leq \prod_{ B \textrm{ block in } \pi } Cat_{\frac {| B |}{2} -1 }.
 \end{align*}
 On the other hand, 
 $ \VN  = O(N^{-1}) $
 unless the restrictions  of
  $ p $ and $ q $
  to each block of
  $ \pi $
  satisfy the relation (\ref{eq:321}) from part $2^\mathrm{o}$ of Proposition \ref{prop:31}. 
  Furthermore, note that if 
  $ B $
   is a block of 
  $ \pi $
  with exactly 2 elements, then the restrictions of $ p $ and $ q $ to $ B $ 
    are uniquely determined by
   relation (\ref{eq:321});  if 
  $ B $ is a block of 
  $ \pi $
   of length greater than $ 2 $, then relation (\ref{eq:321}) allows for at most two possible restrictions to $ B $ of the pair
    $(p. q) $,
     one when 
  $ a_{2}= p(a_1) $ 
  and one when 
  $ a_2 = q(a_1) $.
 Henceforth, using that 
 $ \displaystyle  Cat_p < \frac{4^{2p}}{2} $,
 we obtain 
 \begin{align*}
  \limsup_{ N \rightarrow \infty } | \E \circ \tr \big( [ U_N^{\sigma_N} ( U_n^{\sigma_N})^\ast ]^n   \big) |   \leq  & 
  \sum_{ \pi \in NC(2n, alt) } 2^{ | \{ B \textrm{ block in } \pi : \  | B | > 2 \} | } \cdot
  \prod_{ B \textrm{ block in } \pi } Cat_{ \frac{| B |}{2} -1 } \\
   & \  \hspace{1cm} < 
  | NC(2n, alt) | \cdot 4^n.
 \end{align*}
 
 Next, note that 
 $ NC( 2n, alt ) $ 
 coincides with the set of non-crossing partitions on 
 $ [ 2n ] $
  with all blocks of even length, hence
  $ \displaystyle | NC( 2n, alt) | 
  =  \frac{1}{2n +1} \binom{3n}{n} $
  (see, for example sequence  A001764 in \cite{OEIS})).  
Furthermore, since  
    $\displaystyle \binom{3n}{n} < \frac{3^{3n}}{4^n}$, 
    it follows that 
 \begin{align*}
   \limsup_{ N \rightarrow \infty } | \E \circ \tr \big( [ U_N^{\sigma_N} ( U_n^{\sigma_N})^\ast ]^n   \big) | 
   \leq 27^n,
   \end{align*}
   hence the conclusion.
 \end{proof}


\section{Asymptotic distribution of permuted Haar unitary matrices}
\label{sec:asymptotic_distribution}
In this section we prove Theorem \ref{thm:11} from the introduction. We prove separately the claim about the $\ast$--distribution of a permuted matrix and the claim about asymptotic freeness. The two theorems of this section combined give the result announced in the introduction. We identify conditions on a deterministic sequence $(\sigma_N)_N$ of permutations of matrices, under which the sequence $\left(U_N^{\sigma_N}\right)_N$ of permuted Haar unitary matrices is with $N\to\infty$: (1) asymptotically circularly distributed, (2) asymptotically free from the unpermuted sequence $(U_N)_N$. We state the conditions below. In Sections 6 we shall show that the the results of this section hold also in the almost sure sense. Moreover in Section 7 we show that the conditions for asymptotic circular distribution and asymptotic freeness are satisfied almost surely by a sequence of random permutations, showing that these are rather generic and general conditions.

\begin{notation}\label{not:41}
For
$ N \geq 1 $,
let 
 $\sigma$
	  and
	  $ \mu $
	   be  permutations on 
	   $[N]\times[N]$.

  $1^\mathrm{o}$ Define
 \begin{align*} 
 	X_N(\sigma,\tau)=|\{(i,j,k) \mbox{ such that } \sigma(i,j)\in\{\tau(i,k),\tau(k,j)\}|.
 	\end{align*}
 	In particular $X_N(\sigma,id_N)$ is the number of elements which do not change row or column after the permutation $\sigma$.
 	
 	$2^\mathrm{o}$ Define
 		\begin{align*}
 		Z_N(\sigma)
 		&=| \big\{ (i,j, k, l ) \in [ N ]^4 : \
 			\sigma (i, j) =
 			\big( \pi_1 \circ \sigma (i, l), 
 			\pi_2 \circ \sigma ( k, j) \big)
 		 \big\} |\\
 		&+| \big\{ (i,j, k, l ) \in [ N ]^4 : \
 			\sigma (i, j) = 
 			\big( \pi_1 \circ \sigma ( k, j),
 			\pi_2 \circ \sigma (i, l) 
 			\big)
 			  \big\} |.	
 		\end{align*}
 
	$3^\mathrm{o}$ 
	   Denote
	   \begin{enumerate}
	   \item[${}$]$ Y_{1,N}(\sigma, \mu) =
	   | \big\{  
	   (i, j, k) \in [ N ]^3:\
	    \pi_1 \circ \sigma (i, j) \in 
	    \{ 
	    \pi_1 \circ \mu (i, k), 
	    \pi_1 \circ \mu ( k, j)   \} 
	   \big\} |, $
	   \item[${}$]
	   $ Y_{2,N}(\sigma, \mu) =
	   	   | \big\{  
	   	   (i, j, k) \in [ N ]^3:\
	   	    \pi_2 \circ \sigma (i, j) \in 
	   	    \{ 
	   	    \pi_2 \circ \mu (i, k), 
	   	    \pi_2 \circ \mu ( k, j)   \} 
	   	   \big\} |, $
	   \end{enumerate}
	   and
	   \[ 
	   Y_N( \sigma, \mu ) = Y_{1,N}(\sigma, \mu ) 
	   + Y_{2,N} ( \sigma , \mu ).
	   \]
	Thus $Y_N(\sigma,\mu)$ counts the number of elements that were in the same row or column and permutations $\sigma,\mu$ take them to the same row or column.
\end{notation}
 
 \begin{definition}\label{def:permprop}
 	Let
 	 $\left(\sigma_N\right)_{N\in\mathbb{N}}$ 
 	 be a sequence of permutations, such that
 	  $\sigma_{N}\in \mathcal{S}( [ N ]^2 )$
 	   for 
 	   $N\geq 1$.
 	We say that the sequence $(\sigma_N)_N$ satisfies property:
 	\begin{enumerate}
 	  \item[(C1)] if \quad 
 	  	$\ds
 	   		\lim_{N \rightarrow \infty } \frac{ Z_N(\sigma_N) }{ N^4 } = 0,$
 	   		 \bigskip
 	   		 		
 	  	\item[(C2)] if \quad 
 	    $\ds
 	   	\lim_{N \rightarrow \infty } \frac{X_N(\sigma_N,id_N)}{N^2} = 0,	$
 	   	\bigskip
 	   	
 \item[(C)  ] if \quad
  		$\ds
  		\lim_{N \rightarrow \infty } \frac{ Y_N(\sigma_N , \sigma_N ) }{ N^3 } = 0.$

 	\end{enumerate}
 \end{definition}
 
 \begin{remark}\label{c:c1c2}
Condition
  (C) 
 imply both conditions 
  (C1) 
  and 
   (C2) .
  
  First, it is straightforward to observe that for any permutation
   $\sigma$  of $[N]^2$
    we have
     $Z_N(\sigma)\leq N Y_N(\sigma,\sigma)$. Indeed any tuple 
     $(i,j,k)$ 
     which was counted by 
     $Y_N(\sigma,\sigma)$
      will contribute to
       $Z_N(\sigma)$
        and there are at most 
        $N$ 
        ways to choose the fourth element in 
        $(i,j,k,l)$.
        
    Next, if for each
     $ i \in [ N ] $
     we denote
     \begin{align*}
     a_i  & = | \big\{ j \in [ N ] : \ 
      \pi_1\circ \sigma ( i, j) = i   \big\} |
      \\
      b_i  &
       = | \big\{ 
        j \in [ N ]: \ 
         \pi_2 \circ \sigma ( j, i) = i 
       \big\} |,
     \end{align*}
     we have that
      $ \ds 
      Y_{1, N} ( \sigma, \sigma ) 
      \geq \sum_{ i =1}^N a_i^2 $    
      and
     $ \ds 
     Y_{ 2, N } ( \sigma, \sigma) 
     \geq \sum_{i,j=1}^N 
     b_i^2 $, 
     hence
     \begin{align*}
   X_N ( \sigma, id_N ) =
   \sum_{ i=1}^N ( a_i + b_i)
   \leq 
   \frac{1}{N} \sum_{i,j=1}^N a_i^2 + b_i^2
   \leq \frac{1}{N} Y_N ( \sigma).
     \end{align*}
     
    In Section 7 we will show that conditions from Definitions  \ref{def:permprop} and \ref{def:pairperm} are almost surely satisfied by a sequence of random permutations. This remark implies that it is enough to check that 
    (C)
     is generic and it follows that so are
     (C1)
     and 
     (C2). 
     This observation justifies also reference to condition
      (C)
       in the Introduction.  
\end{remark} 

\begin{remark}
	Condition 
	(C2)
	 implies that 
	$ Y( \sigma_{s, N},id_N) = o(N^3).$
	
	Indeed, if
	$ \sigma_{s, N} $ 
	satisfies 
	 (C2),
	  then
	\begin{align*}
	Y_1( \sigma_{s,N}, \textrm{Id}) & = 
	| \big\{
	(i, j, k) \in [ N ]^3:\ 
	\pi_1 \circ \sigma_{s, N} (i, j) \in \{ i, k \} 
	\big\}|\\
	\leq & |\big\{
	(i, j) \in [ N]^2:\ \pi_1 \circ \sigma_{s, N}(i, j) = i 
	\big\}|
	+ 
	| \big\{
	(i, j, k) \in [ N ]^3:\ k = \pi_1 \circ \sigma_{s, N}(i, j)
	\big\}|
	\\
	=  & o(N^2) + N^2,
	\end{align*}
	and similar computations hold  true also for 
	$ Y_2( \sigma_{s, N}, \textrm{Id})$.
	
\end{remark}

\begin{definition}\label{def:pairperm}
	Let
	$\left(\sigma_N\right)_{N\in\mathbb{N}},\left(\tau_N\right)_{N\in\mathbb{N}}$  
	be two sequences of permutations, such that
	$\sigma_{N},\tau_N \ab \in \mathcal{S}( [ N ]^2 )$
	for 
	$N\geq 1$.
	We say that the pair of sequences 
	$(\sigma_N)_N, ( \tau_N )_N $
	 satisfy property  
\begin{align} \mathit{if\ } \tag{C3}
		\lim_{N \rightarrow \infty } \frac{ Y_N(\sigma_N , \tau_N ) }{ N^3 } = 0
		\mathrm{\ and \ }
		\lim_{N \rightarrow \infty } \frac{X_N(\sigma_N,\tau_N)}{N^2} = 0.		\end{align}
\end{definition}

The next theorem proves the first claim of Theorem \ref{thm:11}. Observe that it is enough to consider any monomial in $U_N^{\sigma_N}$ and $\left(U_N^{\sigma_N}\right)^*$ instead of a general polynomial.
\begin{theorem}\label{thm:circ}
If 
$(\sigma_N)_N$
 satisfies
  \emph{(C1)},
   then
 $ U_N^{\sigma_N} $ 
 is asymptotically circularly distributed with variance 1.
\end{theorem}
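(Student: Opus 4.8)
The goal is to show that for the alternating-length monomials — recall that odd-length moments and moments with unequal numbers of $U^{\sigma_N}$ and $(U^{\sigma_N})^*$ vanish — the limit of
\[
\E\circ\tr\big((U_N^{\sigma_N})^{\varepsilon_1}\cdots(U_N^{\sigma_N})^{\varepsilon_{2n}}\big)
\]
equals the corresponding $*$-moment of a circular element. Since a circular element $c$ with $\kappa_2(c,c^*)=\kappa_2(c^*,c)=1$ and all other cumulants zero has $*$-moments given by $\varphi(c^{\varepsilon_1}\cdots c^{\varepsilon_{2n}}) = |NC_2^{\varepsilon}(2n)|$ when $\varepsilon$ has equally many $1$'s and $*$'s (and the pairing only connects a $1$ to a $*$), and $0$ otherwise, the target is: the limit is the number of non-crossing pairings of $[2n]$ that match each $U^{\sigma_N}$-leg to a $(U^{\sigma_N})^*$-leg. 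Actually, it is cleaner to observe that for a circular element the moment equals $\sum_{\pi}\kappa_\pi = |NC_2^{\varepsilon}(2n)|$; so I want to produce exactly one contribution of size $1+o(1)$ for each such non-crossing pairing, and show everything else is $O(N^{-1})$.

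**Key steps, in order.** First, apply the Weingarten expansion \eqref{weigarten}: the moment is $\sum_{p,q\in\mathcal{P}_2^\varepsilon(2n)}\VN$. By Proposition~\ref{prop:31}, part $1^\mathrm{o}$, each term is $O(1)$; by part $3^\mathrm{o}$, only pairs with $p\vee q$ non-crossing survive in the limit; by part $2^\mathrm{o}$, a surviving term additionally forces, on each block $\{a_1<\dots<a_m\}$ of $p\vee q$, the relation $\{p(a_s),q(a_s)\}=\{a_{s-1},a_{s+1}\}$ and hence $\varepsilon_{a_s}\neq\varepsilon_{a_{s+1}}$ — in particular every block has even size with alternating $\varepsilon$-pattern. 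Second — and this is the place where condition (C1) enters — I must show that a surviving term cannot have a block of size $\geq 4$; equivalently, that whenever $p\vee q$ has a block of size $\geq 4$, the set $\AN$ is small enough that $\VN=O(N^{-1})$. The mechanism: if a block $B$ of $p\vee q$ has four consecutive (in cyclic order within $B$) elements $a,b,c,d$, then by the relation from part $2^\mathrm{o}$ one of the following index coincidences holds, say $k_a=k_b$ and $l_b=l_c$ type relations, which after unwinding $(k_m,l_m)=\sigma_m\circ\varepsilon_m(i_m,j_m)$ translate into an identity of the form $\sigma(i,j)=(\pi_1\circ\sigma(i,l),\pi_2\circ\sigma(k,j))$ (or the variant with the roles swapped). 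The number of such $(i,j,k,l)$ is exactly $Z_N(\sigma_N)$, which by (C1) is $o(N^4)$. I would then feed this into the $\FNm{\cdot}$ bookkeeping of Lemma~\ref{fpq:1}: the interval containing $\{a,b,c,d\}$ has its count reduced by a factor $o(N)$ relative to the generic bound, and Lemma~\ref{fpq:1}(4) propagates this to $\FNm{2n}=o(N^{2n+1-|p\vee q|})$, whence $\VN=o(1)$, and combined with the $O(1)$ bound this forces $\VN=O(N^{-1})$ after a closer look (or simply: it tends to $0$, which is all we need). Third, for the terms that do survive — those with $p\vee q$ non-crossing with all blocks of size exactly $2$ — I compute the exact leading coefficient: such a $p\vee q$ is a non-crossing pairing $\pi$; $C_{p,q}=1$ since each block has size $2$; and $p=q=\pi$ is the unique choice of $(p,q)\in\mathcal{P}_2^\varepsilon(2n)^2$ with $p\vee q=\pi$ and size-$2$ blocks, which forces the $\varepsilon$-alternation along $\pi$ (so $\pi\in NC_2^\varepsilon(2n)$). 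One checks, exactly as in Remark~\ref{remark:3:1} but now for the relevant block structure, that $\FN = N^{2n+1-|\pi|}(1+o(1)) = N^{n+1}(1+o(1))$ — here I need that $\AN$ imposes no further constraints beyond those coming from the chain conditions $i_{s+1}=j_s$ and the block identifications, which on a non-crossing pairing with $p=q$ exactly matches the count; then $\VN = \Wg_N(\pi,\pi)\cdot\frac{1}{N}\FN = N^{-2n+n}\cdot C_{\pi,\pi}\cdot\frac{1}{N}\cdot N^{n+1}(1+o(1)) = 1+o(1)$. Summing over all $\pi\in NC_2^\varepsilon(2n)$ gives the moment of a circular element.

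**Main obstacle.** The crux is the second step: extracting from the combinatorial relation in Proposition~\ref{prop:31} part $2^\mathrm{o}$ (on a block of size $\geq 4$) the precise pair of index identities that matches the definition of $Z_N$, and being careful that \emph{some} such relation always appears (handling the several cases of which of $p,q$ realizes which adjacency, and the $\varepsilon$-dependence coming from $\varepsilon_m(i,j)$ swapping coordinates). Once that translation is pinned down, invoking (C1) and Lemma~\ref{fpq:1}(4) is mechanical. A secondary nuisance is bookkeeping the exact power of $N$ in $\FN$ for the surviving pairings — one has to make sure no hidden coincidences inflate or deflate the count — but Remark~\ref{remark:3:1} is essentially a template for this, and the general non-crossing-pairing case follows the same pattern block by block.
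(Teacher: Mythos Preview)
Your proposal is correct and follows essentially the same route as the paper's proof: Weingarten expansion, reduction to non-crossing $p\vee q$ via Proposition~\ref{prop:31}, elimination of blocks of size $\geq 4$ using condition~(C1), and identification of the surviving contributions with $NC_2^\varepsilon(2n)$. Two small points where the paper sharpens your sketch: (i) only \emph{three} consecutive elements $s-1,s,s+1$ of a block are needed (not four), and the paper fixes the case analysis by first cyclically rotating so the block is $[m]$ and then choosing $s$ with $\varepsilon_{s-1}=\varepsilon_{s+1}=1$, $\varepsilon_s=\ast$, which makes the unwinding land exactly on one of the two sets defining $Z_N(\sigma_N)$ and allows a direct application of Lemma~\ref{fpq:1}(4) with $S=\{s-1,s,s+1\}$; (ii) rather than computing $\FN$ directly for a general $\pi\in NC_2^\varepsilon(2n)$, the paper handles the surviving case $m=2$ by showing $i_1=i_3$ and then invoking induction on $n$, which sidesteps the block-by-block bookkeeping you allude to.
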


\begin{proof}
 Fix 
 $ n \in \mathbb{N} $ 
 and $ \varepsilon : [ 2n ] \rightarrow \{ 1, \ast \} $
  and denote
  \begin{align*}
  NC_2^\varepsilon ( 2n ) = &
  \big\{  \pi \in NC_2(2n) 
  \textrm { such that }
   \varepsilon_{ \pi (s) } \neq \varepsilon_s  
   \textrm { for all } s \in [2n ] 
   \big\}.
  \end{align*}
  
 It suffices to show that 
 \begin{align}\label{thm:01:01}
 \lim_{ N \rightarrow \infty }
\E\circ \tr
\left( ( U_N^{\sigma_N})^{\varepsilon(1)} 
( U_N^{\sigma_N})^{\varepsilon(2)} 
\cdots 
( U_N^{\sigma_N})^{\varepsilon(2n)} 
\right)
=  | NC_2^\varepsilon ( 2n ) |,
 \end{align}
 and the conclusion follows because these are the $*$-moments of circular operator of variance 1.
 
 According to Lemma \ref{lemma:c1}, we have that
 \begin{align} \label{thm:01:02}
  \lim_{ N \rightarrow \infty }
 \E\circ \tr
 \left( ( U_N^{\sigma_N})^{\varepsilon(1)} 
 ( U_N^{\sigma_N})^{\varepsilon(2)} 
 \cdots 
 ( U_N^{\sigma_N})^{\varepsilon(2n)} 
 \right)
 = 
 \sum_{\substack{p,q\in \mathcal{P}_2^\varepsilon(2n)\\ p \vee q \textrm { is non-crossing}}}
  \lim_{N \rightarrow \infty}
  \VN,
 \end{align}
 so it suffices to show the equality  of the  right-hand terms from (\ref{thm:01:01}) and (\ref{thm:01:02}).
 
 From the construction of the partition 
 $ p \vee q $,
  we have that 
 $ (a, b) $ is a block of length $ 2 $ in
  $ p \vee q $
  if and only if 
  $ p(a) = q (a) = b $. 
  So 
  $ p \vee q \in NC_2^\varepsilon ( 2n ) $
  if and only if 
  $ p = q $. 
  Conversely, given 
  $ \pi \in NC_2^\varepsilon(2n) $,
  taking 
  $ p = q = \pi $
  gives that
  $ p \vee q = p = \pi $.
  Hence
  it suffices to show that condition 
  (C1)
   implies that
   $ \displaystyle 
    \lim_{N \rightarrow \infty } \VN = 0 $
    unless 
  $ p = q $.
  
  We shall prove the statement above by induction on $ n $. For
   $ n =1 $ 
   the property is trivial.
    Fix 
     $ n \geq 2 $ 
     and suppose that 
     $ p, q $  are two pairings from 
     $ \cP_2^\varepsilon ( 2n ) $ 
     such that 
      $\displaystyle 
      \limsup_{N \rightarrow \infty} |\VN | > 0$.
      From Lemma \ref{lemma:c1}, the partition
      $ p \vee q $ 
      is then non-crossing, hence (see, for example \cite[Remark 9.2.2]{NicaSpeicherLect}) it has a block
      of the form
      $ ( a, a+1, a+2, \dots, a+ m ) $. 
      Then, via a cyclic permutation on the set
      $ [ 2n ] $, 
      we can suppose that
      $ (1, 2, \dots, m ) $ 
      is a block in 
      $ p \vee q $.
      
      We shall show that condition
      (C1)
       implies that if $p,q$ are such that $p \vee q$
        has a block with more than two elements then 
        $\VN\to 0$ 
        as 
         $N\to\infty$.
      Suppose first that
       $ m > 2 $. 
       Since 
       $ m $ 
       is even, we have that 
       $ m \geq 4 $;
       moreover, Lemma \ref{lemma:c2} gives that
       $ \varepsilon(s) \neq \varepsilon (s+1) $
       for all $ s \in [ m -1 ] $.
       Therefore there exists 
        $ s \in \{1,\ldots,m-1\} $
        such that
        $ \varepsilon(s-1) = \varepsilon (s+1) =1 $
        and
        $ \varepsilon(s) =\ast $, 
        that is
         $( k_{s-1}, l_{s-1}) = \sigma_N ( i_{s-1}, j_{s-1}),\,$
         $ (k_s, l_s ) = \sigma_N ( j_s, i_s )$
         and
         $ ( k_{s+1}, l_{s+1} ) = \sigma_N ( i_{s+1}, j_{s+1}) $.

    Denote 
    $ i: = j_s = i_{s+1} $,
    $ j := j_{s-1} = i_s $,
     $ k := i_{s-1} $ 
     and
     $ l := j_{s+1} $.
      Lemma  \ref{lemma:c2} gives also that
          $ \{p(s),  q (s ) \} = \{ s-1, s+1 \} $.
     Consider two cases:
     \begin{itemize}
     	\item If
     	$ ( p(s), q(s)) = (s-1, s+1 ) $, 
     	then
     	$ k_s = k_{s-1} $ 
     	and
     	$ l_s = l_{s+1} $,
     	so
     	\begin{align*}
     	\FNm {\{ s, s-1, s+1 \} }
     	\leq | \big\{ (i, j, k, l):\ 
     	\sigma_N (i, j) = ( k_s, l_s ) = 
     	\big( \pi_1 \circ \sigma_N ( k, j),
     	\pi_2 \circ \sigma_N ( i, l) \big) 
     	\big\}|
     	\end{align*}   
     	and condition
     	 (C1)
     	  gives that
     	$ \FNm { \{ s, s-1, s+1 \} } = o(N^4)$
     	hence Lemma \ref{fpq:1}(4) implies that
     	$ \displaystyle \lim_{N \rightarrow \infty } 
     	\VN = 0 .$
     	\item     Similarly, if
     	$(p(s), q(s) ) = (s+1, s-1) $, 
     	then 
     	$ k_s = k_{s+1} $
     	and
     	$ l_s = l_{s-1}$, 
     	therefore
     	\begin{align*}
     	\FNm { \{ s, s-1, s+1 \} }
     	\leq | \big\{ (i, j, k, l):\ 
     	\sigma_N (i, j) = ( k_s, l_s ) = 
     	\big( \pi_1 \circ \sigma_N ( i, l),
     	\pi_2 \circ \sigma_N ( k, j) \big) 
     	\big\}|
     	\end{align*}   
     	and again condition
     	(C1) 
     	 and   Lemma \ref{fpq:1}(4) imply that
    	$ \displaystyle \lim_{ N \rightarrow \infty } 	\VN  = 0.$
     	
     \end{itemize}

    The only case remaining is
    $ m = 2 $. 
    then $ (k_1, l_1) = (k_2, l_2 ) $
    and, since 
    $ \varepsilon_1 =  \varepsilon_2 \circ \ast  $,
     we have that 
     \[
     (i_1, i_2) = \big( \sigma_N \circ \varepsilon_1 \big) ^{ -1} 
     (k_1, l_1) = 
  \ast \circ \varepsilon_{ 2 }^{-1} \circ \sigma_N^{-1} ( k_1, l_1)
      =  \ast ( i_2, i_3) 
     \]
      that is
       $ i_1 = i_3 $. Finally we apply the induction hypothesis to the partitions obtained by removing the block 
       $ (1, 2) $
       from 
       $ p \vee q $ and the conclusion follows.
\end{proof}

  \begin{remark}\label{remark:4:1}
 The proof of Theorem \ref{thm:circ} above does not use that 
  $ \sigma_{ s, N } = \sigma_N $ 
  for 
  $ s > m $. 
  In fact, we have proved a more general technical result from below, which we shall use later in this section.

  \emph{ Suppose that}
   $ \big( \sigma_N \big)_N $ 
   \emph{is a sequence of permutations that satisfies condition
    \emph{(C1)}
   and that}
   $ \sigma_{ s, N } = \sigma_N $
   \emph{for all}
   $ s \in [ m ] $.
   \emph{Suppose also that}
   $ p, q \in \cP_2^\varepsilon (2n) $ 
   \emph{are such that}
   $ [ m ] = \{ 1, 2, \dots, m \} $
   \emph{ is a block of }
   $ p \vee q  $,
   \emph{i.e.}
   $ p \vee q $
   \emph{is the juxtaposition}
   $ [ m ] \oplus \tau $
   \emph{of the block}
   $ [ m ] $
   \emph{and of}
   $ \tau $,
   \emph{some partition on}
   $ [ n ] \setminus [ m ] $.
   \emph{Then:}
   \begin{enumerate}
   \item[(i)] \emph{If} 
    $ m > 2 $
    \emph{then}
    $ \displaystyle 
   \lim_{N \rightarrow\infty }  \VN = 0 $.
   \item[(ii)]\emph{If}
   $ m =2 $, 
   \emph{then} 
   $ p(1) = q (1) = 2 $, \textit{both} 
   $\ds\lim_{N \rightarrow \infty} \VN$ \textit{and}  $\ds\lim_{ N \rightarrow \infty }
   \mathcal{V}_{ \overrightarrow{\sigma}^\prime, \varepsilon^\prime} ( \widetilde{ p }, \widetilde{ q } )$ \textit{exist and are equal}
   \emph{where 
   $ \overrightarrow{\sigma}^\prime $, 
  $ \varepsilon^\prime $,
   $ \widetilde{ p } $
   respectively
   $ \widetilde{ q } $
   are the restrictions of
    $ \overrightarrow{\sigma} $,
    $ \varepsilon $,
   $ p $, 
   respectively 
   $ q $
   to the set}
  $  \{ 3, 4, \dots, n \}.  $  $\hfill \square$
   \end{enumerate}
  \end{remark}

The next Theorem gives a sufficient condition on the sequence
 $ \big( \sigma_N \big)_N $ 
 such that
  $ U _N $ 
  and 
  $ U_N^{ \sigma_N } $
  are asymptotically free. 
  For the proof, we shall need the following technical result.
  
  \begin{lemma}\label{tech:main:01}
  	Let
  	$ p, q \in \cP_2^\varepsilon (2n ) $
  	be such that
  	$ [ m ] = \{ 1, 2, \dots, m \} $
  	is a block of
  	$ p \vee q $.
  	Suppose also that
  	 $ \displaystyle 
  	 \limsup_{ N \rightarrow \infty}
  	 \VN > 0 $
  	 and that there exists 
  	 $ s\in\{2,\ldots, m \}$ 
  	 such that 
  	 $ \sigma_{s-1} = \sigma_{ s} = \textrm{Id} $.
  	 We have then:
  	 \begin{enumerate}
  	 	\item[(i)] if 
  	 	$ \varepsilon_s = 1 $,
  	 	then
  	 	 $ s = p( s -1 ) $ 
  	 	 and 
  	 	  $ i_s = k_{ s - 1 } = k_{ s } $;
  	 	  \item[(ii)] if
  	 	   $ \varepsilon_s = \ast $, 
  	 	    then
  	 	    $ s = q ( s -1 ) $
  	and
  	 $ i_s = l_{ s -1} = l_s $. 	    
  	 \end{enumerate}
  	  \end{lemma}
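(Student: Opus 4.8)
The proof will follow the pattern of Lemmas~\ref{lemma:c1} and \ref{lemma:c2}: argue by contradiction, assuming the asserted pairing fails, and show that then $\VN = O(N^{-1})$, contradicting $\limsup_{N\to\infty}\VN > 0$. First I record the standing consequences of $\limsup_N \VN > 0$. By parts $2^\mathrm{o}$ and $3^\mathrm{o}$ of Proposition~\ref{prop:31}, $p\vee q$ is non-crossing and the relation (\ref{eq:321}) holds on every block; on the block $[m]$, where the ordered elements are $a_t = t$, this reads $\{p(t),q(t)\} = \{t-1,t+1\}$ with the cyclic convention $m+1\equiv 1$, together with $\varepsilon_t \neq \varepsilon_{t+1}$ for all $t\in[m]$ (cyclically). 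In particular $\varepsilon_{s-1}\neq\varepsilon_s$, so in case~(i) $\varepsilon_{s-1} = \ast$ and in case~(ii) $\varepsilon_{s-1} = 1$, and moreover $s\in\{p(s-1),q(s-1)\}$, i.e.\ either $p(s) = s-1$ or $q(s) = s-1$. Once the correct one is identified the rest of the conclusion is automatic: in case~(i), $p(s) = s-1$ gives $k_s = k_{p(s)} = k_{s-1}$ by the definition of $\AN$, while $\sigma_s = \textrm{Id}$ and $\varepsilon_s = 1$ give $(k_s,l_s) = (i_s,j_s)$, whence $i_s = k_s = k_{s-1}$; case~(ii) is identical with $q$, $l$ in place of $p$, $k$. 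When $m = 2$ the relation (\ref{eq:321}) forces $p(1) = q(1) = 2$, so both possibilities coincide and the claim is immediate; assume therefore $m\geq 4$.

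For case~(i), suppose for contradiction that $q(s) = s-1$. If $s\geq 3$, then $q(s) = s-1$ forces $q(s-1) = s$, and (\ref{eq:321}) on $[m]$ then gives $p(s-1) = s-2$ (here $s-2\geq 1$, so there is no cyclic wrap). Since $\sigma_{s-1} = \textrm{Id}$ and $\varepsilon_{s-1} = \ast$, we have $(k_{s-1},l_{s-1}) = (j_{s-1},i_{s-1})$, hence $j_{s-1} = k_{s-1} = k_{p(s-1)} = k_{s-2}$, and $k_{s-2}$ is determined by $(i_{s-2},j_{s-2})$; together with $i_{s-1} = j_{s-2}$ from the definition of $\AN$, the pair $(i_{s-1},j_{s-1})$ is determined by the coordinates of the positions in $[s-2]$, so $\FNm{[s-1]} = \FNm{[s-2]}$. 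By (\ref{eq:fnm}), $\FNm{[s-2]}\leq N^{|[s-2]|+1}$, and since $s-1 < m$ no block of $p\vee q$ is contained in $[s-1]$; thus $\FNm{[s-1]}\leq N^{|[s-1]|} = o\big(N^{|[s-1]|+1 - |\{B\in p\vee q:\, B\subseteq[s-1]\}|}\big)$. Applying Lemma~\ref{fpq:1}(4) with $S = [s-1]$ gives $\FNm{2n} = o(N^{2n+1-|p\vee q|})$, hence $\VN = O(N^{-1})$, the desired contradiction. The subcase $s = 2$ is handled the same way on the interval $[2]$: $\sigma_1 = \sigma_2 = \textrm{Id}$, $\varepsilon_1 = \ast$, $\varepsilon_2 = 1$ give $l_1 = i_1$, $l_2 = j_2$, and $q(2) = 1$ forces $j_2 = i_1$, so with $i_2 = j_1$ the pair $(i_2,j_2)$ is determined by position $1$, $\FNm{[2]} = \FNm{[1]}\leq N^2 = o(N^3)$, and Lemma~\ref{fpq:1}(4) again gives the contradiction. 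Therefore $p(s) = s-1$, i.e.\ $s = p(s-1)$. Case~(ii) follows by the same argument with the roles of $p$ and $q$ (equivalently, of the first and second indices) interchanged: assuming $p(s) = s-1$ one gets $q(s-1) = s-2$, $\varepsilon_{s-1} = 1$, $(k_{s-1},l_{s-1}) = (i_{s-1},j_{s-1})$, hence $j_{s-1} = l_{s-1} = l_{s-2}$, so again $\FNm{[s-1]} = \FNm{[s-2]}$ and a contradiction; thus $q(s) = s-1$.

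The main obstacle is the combinatorial bookkeeping that converts the ``wrong'' pairing into an extra factor $N^{-1}$: one must trace how $q(s) = s-1$ (resp.\ $p(s) = s-1$) propagates through the cyclic relation (\ref{eq:321}) to pin down a neighbouring index, and one must pick an interval $S$ — here $[s-1]$ when $s\geq 3$ — that contains \emph{no} complete block of $p\vee q$, for otherwise the relevant count would only match, not beat, the generic bound (\ref{eq:fnm}) and Lemma~\ref{fpq:1}(4) would not be applicable. The degenerate cases $m = 2$ and $s = 2$ must be separated out, as above, but are straightforward.
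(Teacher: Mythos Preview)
Your proof is correct and follows the same contradiction strategy as the paper's, reducing to Lemma~\ref{fpq:1}(4). The paper's version is slightly more streamlined: instead of propagating back to position $s-2$ via $p(s-1)=s-2$ and using the interval $[s-1]$ (which forces your $s=2$/$s\geq 3$ case split), it works directly with the two-element interval $\{s-1,s\}$, observing that the ``wrong'' pairing $q(s-1)=s$ yields $l_{s-1}=l_s$, hence $i_{s-1}=i_{s+1}$ and $\FNm{\{s-1,s\}}\leq N^2$, uniformly in $s$.
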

  	  
  \begin{proof}
  	 If 
  	  $ m =2 $,
  	   the result is trivial.
  	   
  	   Suppose that 
  	    $ m > 2 $ and
  	    $ \varepsilon_s = 1 $.
  	    Lemma \ref{lemma:c2} gives that
  $ \varepsilon_{ s -1 }= \ast $.
  So
  	$ l_s = i_{ s + 1 } $,
   $ l_{s-1} = i_{ s -  1 }  $
   and
   	$ k_s =k_{ s - 1 } =  i_s $.  
   	
   	If $ s \neq p ( s - 1 ) $,
   	 applying again Lemma \ref{lemma:c2}, we obtain 
   	 $ s = q ( s -1 ) $, 
   	 i.e. $l_s=l_{s-1}$ which in turn implies that
   	 $ i_{s-1} = i_{ s + 1 } $,
   	 so
   	 $\FNm { \{ s-1, s \} }  \leq N^2 $
   	 and the result follows from Lemma \ref{fpq:1} (4).

   	 The case 
   	 $ \varepsilon_s = \ast $
   	 is similar.   
  \end{proof}

The next theorem proves the second claim of Theorem \ref{thm:11} and Theorem \ref{thm:14}, again it is enough to consider monomials as by linearity we get the claim for any polynomial.

\begin{theorem}\label{thm:main:2}
 Let
  $ \big\{ 
  \big( \sigma_{t, N} \big)_{ N>0} : 1 \leq t \leq r
   \big\} $
  be a family of 
  $ r $ sequences of permutations 
  such that each
  $ \big( \sigma_{t, N} \big)_{ N>0} $
  satisfies conditions 
  \emph{(C1)}
  and
   \emph{(C2)}
   then
  $ (U_N)_N $ 
  is asymptotically free from the family 
  $ \big\{ \big( U_N^{ \sigma_{s, N}} \big)_N :
  1 \leq s \leq r 
  \big\} $.
  
  Moreover if each pair $(\sigma_{s,N}),(\sigma_{t,N})$ for
  $1\leq s \neq t \leq r$, satisfies
   \emph{(C3)}
     then
   $ \big\{ \big( U_N^{ \sigma_{s, N}} \big)_N :
    1 \leq s \leq r 
   \big\} $
   forms a family of asymptotically free random matrices.  
\end{theorem}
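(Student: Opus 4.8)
The plan is to pin down the entire limiting joint $*$--distribution of the tuple $(U_N,U_N^{\sigma_{1,N}},\dots,U_N^{\sigma_{r,N}})$ and to identify it with that of $(u,c_1,\dots,c_r)$, where $u$ is a Haar unitary $*$--free from a free family $\{c_1,\dots,c_r\}$ of circular elements; since asymptotic freeness is equivalent to such a statement about the joint distribution, this yields both assertions at once, the first using only \emph{(C1)} and \emph{(C2)} and the second also using \emph{(C3)}. Fix a word $A_1\cdots A_{2n}$ in which each $A_s$ is one of $U_N,U_N^*,U_N^{\sigma_{t,N}},(U_N^{\sigma_{t,N}})^*$, and write $A_s=U^{(\sigma_s,\varepsilon_s)}$ where $\sigma_s\in\{\mathrm{Id},\sigma_{1,N},\dots,\sigma_{r,N}\}$ records the ``colour'' of the $s$-th letter ($\mathrm{Id}$ standing for $U_N$) and $\varepsilon_s\in\{1,*\}$. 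Then \eqref{weigarten} gives $\E\circ\tr(A_1\cdots A_{2n})=\sum_{p,q\in\cP_2^\varepsilon(2n)}\VN$, and by Proposition \ref{prop:31} only the pairs $(p,q)$ with $p\vee q$ non-crossing and each of its blocks satisfying \eqref{eq:321} survive in the limit.

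The crucial step is to show $\lim_{N\to\infty}\VN=0$ unless every block of $p\vee q$ is \emph{monochromatic}, i.e.\ all of its positions carry the same colour. Suppose $B=(a_1<a_2<\dots<a_m)$ is a block satisfying \eqref{eq:321} but not monochromatic. Going cyclically around $B$ (with $a_{m+1}:=a_1$) there is an $s$ with $\sigma_{a_s}\neq\sigma_{a_{s+1}}$; by \eqref{eq:321} one has $a_{s+1}\in\{p(a_s),q(a_s)\}$, hence $k_{a_s}=k_{a_{s+1}}$ or $l_{a_s}=l_{a_{s+1}}$, while $\varepsilon_{a_s}\neq\varepsilon_{a_{s+1}}$. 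Unfolding $k,l$ through $\sigma_{a_s},\sigma_{a_{s+1}}$ and the loop identities $i_{u+1}=j_u$ defining $\AN$, this scalar equation forces a sub-tuple of the $(i_u,j_u)$'s near $a_s,a_{s+1}$ into a set whose size is governed by one of the quantities of Notation \ref{not:41}: when $\{\sigma_{a_s},\sigma_{a_{s+1}}\}=\{\mathrm{Id},\sigma_{t,N}\}$ it is controlled by $X_N(\sigma_{t,N},\mathrm{Id})=o(N^2)$ (condition \emph{(C2)}) or by $Y_N(\sigma_{t,N},\mathrm{Id})=O(N^2)$ (its consequence, recorded in the remark following Remark \ref{c:c1c2}); when $\{\sigma_{a_s},\sigma_{a_{s+1}}\}=\{\sigma_{u,N},\sigma_{v,N}\}$ with $u\neq v$ it is controlled by $X_N(\sigma_{u,N},\sigma_{v,N})=o(N^2)$ or $Y_N(\sigma_{u,N},\sigma_{v,N})=o(N^3)$ (condition \emph{(C3)}). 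In each case one deduces, for the interval $S$ bracketing $a_s$ and $a_{s+1}$ (and possibly a neighbouring index), a bound $\FNm{S}=o\big(N^{|S|+1-|\{B'\in p\vee q:B'\subseteq S\}|}\big)$, whence Lemma \ref{fpq:1}(4) gives $\FN=o(N^{2n+1-|p\vee q|})$ and so $\VN\to0$. This is the mechanism already used in the proof of Theorem \ref{thm:circ} and, when $\mathrm{Id}$ is one of the two colours, it is essentially Lemma \ref{tech:main:01}. Finally, a monochromatic block in a $\sigma_{t,N}$-colour of length $>2$ is eliminated by Remark \ref{remark:4:1}(i) via \emph{(C1)}, exactly as in Theorem \ref{thm:circ}.

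It remains to assemble the surviving terms. If every block of $\pi:=p\vee q$ is monochromatic, then by Remark \ref{remark:4:1}(ii) and the length-$2$ analysis inside the proof of Theorem \ref{thm:circ} every block in a $\sigma_{t,N}$-colour has two elements, carries $p=q$ with $\varepsilon$ alternating, and contributes the factor $1$; every block in the $\mathrm{Id}$-colour is an alternating non-crossing block (Lemma \ref{lemma:c2}) which, as in Remark \ref{remark:3:1}, contributes the Haar-unitary cumulant $(-1)^{|B|/2-1}C_{|B|/2-1}$; and the powers of $N$ cancel, so $\lim_N\VN$ equals the product of these factors. Regrouping the sum over $(p,q)$ by $\pi$, we get $\lim_N\E\circ\tr(A_1\cdots A_{2n})=\sum_\pi\kappa_\pi$, where $\pi$ runs over non-crossing partitions of $[2n]$ with all blocks monochromatic and $\kappa_\pi$ is the product of the above cumulants. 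By the free moment--cumulant formula and Remark \ref{rem:Rdiag} this is exactly $\varphi$ evaluated on the corresponding word in $(u,c_1,\dots,c_r)$, with $u$ a Haar unitary $*$--free from the free circular family $\{c_1,\dots,c_r\}$, which proves the convergence and both freeness statements.

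The main obstacle will be the bookkeeping in the crucial step: turning the single scalar identity $k_{a_s}=k_{a_{s+1}}$ (or $l_{a_s}=l_{a_{s+1}}$) at a bichromatic adjacency, together with the cyclic loop constraints, into membership in precisely one of the sets counted by $X_N$, $Y_N$ (or, inside a longer block, $Z_N$), while keeping track of how $\varepsilon_{a_s}\neq\varepsilon_{a_{s+1}}$ exchanges the roles of rows and columns and of which of $p,q$ realises the adjacency. Once the right counting set is identified, the estimate and the application of Lemma \ref{fpq:1}(4) are immediate from the hypotheses.
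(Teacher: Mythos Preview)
Your proposal is correct and follows essentially the same route as the paper's proof: reduce via Proposition \ref{prop:31} to non-crossing $p\vee q$ with alternating blocks, kill bichromatic blocks using the $X_N$/$Y_N$ bounds from (C2) and (C3) together with Lemma \ref{fpq:1}(4), kill long $\sigma_{t,N}$-monochromatic blocks via (C1) (Remark \ref{remark:4:1}), and identify the surviving contributions with the free cumulants of $(u,c_1,\dots,c_r)$ via Remark \ref{remark:3:1} and Lemma \ref{tech:main:01}. The paper organizes this as an induction peeling off one interval block $[m]$ at a time (after a cyclic shift), which is exactly the device that makes your ``interval $S$ bracketing $a_s$ and $a_{s+1}$'' honest and splits the bookkeeping cleanly into the $m=2$ case (handled by $X_N$) and the $m>2$ case (handled by $Y_N$).
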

                                                       Before proceeding with the proof, we will make the following remark.
\begin{remark}
Conditions
(C1)
and
(C2)
are sufficient, but not necessary for the 
asymptotic free independence of 
$ U_N $ 
and
$ U_N^{ \sigma_N } $.
Example \ref{d-part-transp} from next section describes a sequence 
$ ( \sigma_N)_N $ 
which does not verify 
(C1)
but asymptotic free independence still holds. 
An example of sequence of permutations not verifying 
(C2)
 but asymptotic free independence still holding true is given by each  
$\sigma_N$
 sifting circularly the $i$-th row of a matrix by $i-1$ to the right, i.e. 
 $\sigma_N(i,j)=\big(i,j+i-1 (\mod N) \big) $.
 
\end{remark}

\begin{proof}[Proof of Theorem \ref{thm:main:2}]

 Let $ n $ be a positive integer, 
 $ \varepsilon : [ n ] \rightarrow \{ 1, \ast \} $
 and
 $ \theta : [ n ] \rightarrow \{ id_N , 
 \sigma_{1, N}, \dots, \sigma_{r, N} \} $.
  From (\ref{weigarten}) on page \pageref{weigarten}, we have that
  \begin{align*}
  \E \circ \tr \big(
  U^{ ( \theta_1, \varepsilon_1 ) }
  U^{ ( \theta_2, \varepsilon_2 ) }
  \cdots
  U^{( \theta_n, \varepsilon_n )}
   \big) 
   = \sum_ {p, q \in \cP_2^\varepsilon ( n ) } 
   \VN.
  \end{align*}
  
  If 
  $ p \vee q $
  is crossing, then by Lemma \ref{lemma:c1}, we have that
  $ \displaystyle 
   \lim_{N \rightarrow \infty } \VN = 0 $.
   So the equation above becomes
   \begin{align}
  \lim_{N \rightarrow \infty }
   \E \circ \tr \big(
    U^{ ( \theta_1, \varepsilon_1 ) }
    U^{ ( \theta_2, \varepsilon_2 ) }
    \cdots
    U^{( \theta_n, \varepsilon_n )}
     \big) 
   =
   \sum_{\substack{p, q \in P^\varepsilon_2 (n )\\ p \vee q \textrm{ non-crossing }}}
  \lim_{ N \rightarrow \infty } \VN.
   \end{align}

 Consider
  $( \mathfrak{A}, \varphi ) $
  a non-commutative probability space containing
   the $*$-free 
   $ r + 1 $-tuple 
   $ \{ x_0, x_1, \dots, x_r \} $
   with
    $ x_0 $ 
    a Haar unitary and
    $ x_1, \dots, x_n $
    a circular elements of variance $ 1 $. 
    For 
    $\tau 
    \in \{ id_N, \sigma_{1,N}, \dots, 
    \sigma_{n,N}\}$
     and 
     $\eta \in \{*, 1\}$,
     denote
    \begin{align*}
    x^{( \tau, \eta)} = \left\{
    \begin{array}{cl}
    x_0  & \textrm{ if } 
    ( \tau, \eta ) = ( id_N , 1 ) \\
    x_0^\ast  & \textrm{ if } 
        ( \tau, \eta ) = ( id_N , \ast ) \\
    x_s  &  \textrm{ if } 
        ( \tau, \eta ) = ( \sigma_{s, N}, 1 ) \\ 
    x_s^\ast  & \textrm{ if } 
        ( \tau, \eta ) = ( \sigma_{s, N}, \ast ) .
    \end{array}
    \right.
    \end{align*}
    
    With the assumption that the range of $ \theta $ contains
     $id_N $
      only if each 
      $ \big( \sigma_{s, N})_N $
      satisfies condition ($C1$), 
       it suffices to show that for each
    $ \omega $
     non-crossing partition on 
     $ [ n ] $,
    \begin{align}
    \lim_{ N \rightarrow \infty } 
    \sum_{ \substack{ p, q \in \cP_2^\varepsilon ( n ) \\ p \vee q = \omega }}
    \VN = 
    \kappa_{ \omega } \big[ x^{ ( \theta_1, \varepsilon_1 )},
    x^{ ( \theta_2, \varepsilon_2 )},
    \cdots,
    x^{ ( \theta_n, \varepsilon_n )}
      \big].
    \end{align}

   Fix $\omega \in NC(n)$ as above. Then 
  $ \omega $
  has a block with consecutive elements (see \cite[Remark 9.2.2]{NicaSpeicherLect}) and 
   via a cyclic permutation, we may suppose that this block is
    $ [ m ] = (1, 2, \dots, m ) $.
    Then 
    $ \omega $ 
    can be written as the juxtaposition
     $ \omega_1 \cup\omega_2 $
      where $\omega_1$ consists of the single block
    $ [ m ] $
    and $\omega_2$ is
     a non-crossing partition
      of the set
      $[ n ] \setminus [ m ]= \{m +1 , \dots, n\}$.
      Denoting by
      $ \varepsilon^\prime $, 
      respectively 
      $ \overrightarrow{\sigma}^\prime $
      the restrictions of
       $ \varepsilon $,
       respectively
       $ \overrightarrow{\sigma}$
       to the set 
       $ [ n ] \setminus [ m ] $,
    it suffices to show that
       \begin{align}\label{eq:main2:02}
     \lim_{ N \rightarrow \infty } 
    \sum_{ \substack{ p, q \in \cP_2^\varepsilon ( n ) \\ p \vee q = \omega }}
    \VN & \\
     =
    \kappa_m 
    \big(   x^{( \theta_1, \varepsilon_1)}, &
     x^{( \theta_2, \varepsilon_2)},
     \dots,
     x^{( \theta_m, \varepsilon_m)}
    \big)
    \cdot
      \lim_{ N \rightarrow \infty } 
     \sum_{ \substack{ 
    p^\prime, q^\prime \in \cP_2^{\varepsilon^\prime} ( n-m ) \\ p^\prime \vee q^\prime = \omega_2 }}
     \mathcal{V}_{ \overrightarrow{\sigma}^\prime, \varepsilon^\prime, N } (p^\prime, q^\prime ) \nonumber
       \end{align}
    and the result will follow by induction on 
     the number of blocks of $\omega$. 
     
     If 
     $ m $ 
     is odd, then the left hand side of 
     (\ref{eq:main2:02}) trivially vanishes as there are no
     $ p, q \in \cP_2^\varepsilon ( n ) $
     such that
     $ p \vee q $
     has blocks of even length. Moreover, from Lemma \ref{lemma:c2},
     the left hand side also vanishes if
      $ \varepsilon $
       is not  alternating on 
       $ [ m ] $, 
       i.e.
        $ \varepsilon_s \neq \varepsilon_{ s + 1} $
        for all
        $ s \in [ m -1 ]$. Let us show that is either of these conditions holds then the right hand side of (\ref{eq:main2:02}) also vanishes. 
        If 
        $ \theta $ 
         is not constant on 
         $ [ m ] $, the right hand side vanishes from the assumed $*$-free independence of the set 
         $\{ x_0, x_1, \dots, x_r \} $,
     and if 
         $ \theta $ 
         is constant on 
          $ [ m ] $ but $ \varepsilon $
       is not  alternating on 
       $ [ m ] $ then the right hand side vanishes 
          from the fact that circular operators and Haar unitaries  are
          $ R $-diagonal (i.e. only their alternating free cumulants of even order may be different from zero, see for example \cite[Lect. 15]{NicaSpeicherLect}). It suffices then to prove the equality in
          (\ref{eq:main2:02})
          under the additional assumptions that 
          $ m $
          is even and 
          $ \varepsilon_s \neq \varepsilon_{ s + 1 } $
          for all
          $ s \in [ m -1 ] $.
 
Suppose that 
 $ \theta $
 is not constant on the set 
 $ [ m ] $,
  we shall show next that the left hand side of  equation (\ref{eq:main2:02}) also vanishes.
   
   If  
   $ m =2 $,
 then
 \begin{align*}
 \FNm{2}\leq | \{ ( i_1, i_2, i_3 )\in [ N ]^3:\ 
 \theta_1 \circ \varepsilon_1 ( i_1, i_2 ) 
 = 
 \theta_2 \circ \varepsilon_2 ( i_2, i_3 )  \} | .
 \end{align*}   
 Denoting $ (i,j) = \varepsilon_1 ( i_1, i_2) $ 
 and 
 $ k =i_3 $, 
 we get that
   \begin{align*}
   \FNm{2}\leq |  \big\{ ( i, j, k )\in [ N ]^3:\ 
   \theta_1 (i, j)  
   \in \{ 
   \theta_2 (i, k), (k, j)\}  \big\} | .
   \end{align*} 
   so condition (C2) and the second part of condition (C3)  give that
    $ \FNm{2} = o(N^2) $
    and, from Lemma \ref{fpq:1}(4)
    it follows that
    $ \displaystyle
     \lim_{N \rightarrow \infty} 
     \mathcal{V}_N (p, q )
     =0 $.


   Next suppose that
    $ m > 2 $. 
     Suppose first 
     $ \theta $ 
     is not constant on the set 
     $ [ m ] $
 and let
  $ s \in [ m-1 ] $
  be such that
   $ \theta_s \neq \theta_{s+1} $. 
   According to Lemma \ref{fpq:1}, we have that 
   $ \varepsilon_s \neq \varepsilon_{s+1} $
   and 
   $ s+1 \in \{ p(s), q(s) \} $.
  Denote
    $ (i, j) = \varepsilon_s( i_s, j_s ) $
    and 
    $ k = j_{s+1} $.
   With these notations, if 
    $ s+1 = p(s) $,
    then
    \begin{align*}
     \FNm{ \{ s, s+1 \} } & \leq 
     |\big\{
     (i, j, k):\ 
     \pi_1 \circ \theta_s (i, j)  = \pi_1 \circ \theta_{s+1} ( i_{s+1}, k )
     \big\}|   \\
     & \leq 
     |\big\{
          (i, j, k):\ 
          \pi_1 \circ \theta_s (i, j) 
          \in \{ \pi_1 \circ \theta_{s+1} (i, k) ,
          \pi_1 \circ \theta_{s+1} (k, j )
          \}
          \big\}|\\
     & \leq
     Y_1( \theta_s, \theta_{s+1} ).
     \end{align*}
 So the first part of condition (C3) and (C2) give that
  $ \FNm{ \{ s, s+1 \} } = o( N^3 ) $,
  i.e, according to Lemma \ref{fpq:1}(4), 
  the left-hand side of equation (\ref{eq:main2:02})
  vanishes.
  
  If 
  $ s+1 = q(s) $,
  then 
      \begin{align*}
       \FNm{ \{ s, s+1 \} } & \leq 
       |\big\{
       (i, j, k):\ 
       \pi_2 \circ \theta_s (i, j)  = \pi_2 \circ \theta_{s+1} ( i_{s+1}, k )
       \big\}|   \\
       & \leq 
       |\big\{
            (i, j, k):\ 
            \pi_2 \circ \theta_s (i, j) 
            \in \{ \pi_2 \circ \theta_{s+1} (i, k) ,
            \pi_2 \circ \theta_{s+1} (k, j )
            \}
            \big\}|\\
       & \leq
       Y_2( \theta_s, \theta_{s+1} )
       \end{align*}
and, similarly, the let-hand side of equation (\ref{eq:main2:02})
  vanishes again.

     Next, still under assumption that
           $ m $ 
           is even and that
            $ \varepsilon_s \neq \varepsilon_{ s + 1 }$
            for all 
            $ s \in [ m -1 ], $
  we shall show that equation (\ref{eq:main2:02}) holds when 
     $\theta $ 
     is constant on the set 
     $ [ m ] $.
     
     If 
    $ \theta_t = \sigma_{s, N} $ 
    for all
    $ t \in [ m ] $ 
    and
    $ m > 2 $,
     then the left hand side of (\ref{eq:main2:02}) vanishes by Remark \ref{remark:4:1}(i) and so does the right hand side, since 
     $ x_s $ 
     is a circular element of
     $ \mathfrak{A}$.
     If 
     $ m =2 $, 
     then part (ii) of Remark \ref{remark:4:1} gives that any pair
     $ p^\prime, q^\prime
      \in \cP_2^{ \varepsilon^\prime } (n - m )$ 
      such that
      $ \displaystyle 
     \lim_{ N \rightarrow \infty } \mathcal{V}_{ \overrightarrow{\sigma}^\prime, \varepsilon^\prime } ( 
      p^\prime, q^\prime ) \neq 0 $
      extends uniquely to some
      $ p, q \in \cP_2^\varepsilon ( n ) $ 
      such that
      \begin{align*}
     \lim_{N \rightarrow \infty}
     \VN = 
     \lim_{ N \rightarrow \infty }
     \mathcal{V}_{\overrightarrow{\sigma}^\prime, \varepsilon^\prime, N} ( p^\prime, q^\prime )
     = \kappa_2 \big( x_s^{\varepsilon_1}, x_s^{ \varepsilon_2}  \big)
      \lim_{ N \rightarrow \infty } 
          \mathcal{V}_{\overrightarrow{\sigma}^\prime, \varepsilon^\prime, N}
          ( p^\prime, q^\prime ), 
      \end{align*}
 where, as before,
 $ \varepsilon^\prime$ and $ \overrightarrow{\sigma}^\prime $
 are the restrictions of 
 $ \varepsilon $,
  respectively
  $\overrightarrow{\sigma}$
 to the set
 $ [ n ] \setminus [ m ] $
 and
 $ p, q $
 are defined by 
 \begin{align*}
 p(s) = \left\{
 \begin{array}{ l l }
 2, & \textrm{if } s =1\\
 1, & \textrm{if } s =2\\
 p^{ \prime}(s-m) & \textrm{if } s > m
 \end{array}
  \right.
  \textrm{ respectively }
  q(s) = \left\{
   \begin{array}{ l l }
   2, & \textrm{if } s =1\\
   1, & \textrm{if } s =2\\
   q^\prime(s-m) & \textrm{if } s > m.
   \end{array}
    \right.
 \end{align*}    
    
    Suppose then that 
    $ \theta_t =\textrm{Id} $
    for all
    $ t \in [ m ] $.
 and fix 
    $ p^\prime, q^\prime
     \in \cP_2^{ \varepsilon^\prime}(n-m) $ 
     such that
    $ \displaystyle 
        \lim_{ N \rightarrow \infty } \mathcal{V}_{\overrightarrow{\sigma}^\prime, \varepsilon^\prime, N}
                  ( p^\prime, q^\prime ) \neq 0 $.
           
     If
     $ \varepsilon_1 = 1 $, 
     then, with the notations from Remark \ref{remark:3:1},  let
     $ p, q \in \cP_2^\varepsilon ( n) $
     be given by
\begin{align*}
   p(s) = \left\{ 
   \begin{array}{l l}
   \widetilde{p}_{ m} ( s ) & \textrm{ if } s \leq m \\
   p^\prime ( s- m )  & \textrm{ if } s \geq m
   \end{array}
    \right.
    \textrm{, respectively }
    q( s ) = \left\{
     \begin{array}{l l}
      \widetilde{q}_{ m} ( s ) & \textrm{ if } s \leq m \\
       q^\prime ( s- m )  & \textrm{ if } s \geq m
       \end{array}
    \right.
\end{align*}     
   In particular,
   $ p(m) =1 $,
   so
   $ i_1 = k_1 = k_m = i_m $.
   Therefore, denoting 
   $ \overrightarrow{\textrm{Id}} = ( \textrm{Id}, \dots, \textrm{Id}) $
   and 
   $ \widetilde{\varepsilon} = (1, \ast, \dots, 1, \ast ) $, we have that
   \begin{align*}
   \lim_{ N \rightarrow \infty } \VN
   = \lim_{N \rightarrow \infty }
   \mathcal{V}_{\overrightarrow{\textrm{Id}}, \widetilde{\varepsilon}, N} ( \widetilde{p}_{m}, \widetilde{q}_{m})
   \cdot
   \mathcal{V}_{\overrightarrow{\sigma}^\prime, \varepsilon^\prime, N}
             ( p^\prime, q^\prime ).
   \end{align*}
     Using equation (\ref{eq:u:frcm:1}), the equality  above becomes
   \begin{align*}
   \lim_{ N \rightarrow \infty } \VN
   =
   \kappa_{m}(  x_1, x_1^\ast, \dots, x_1, x_1^\ast )
   \cdot
    \lim_{N \rightarrow \infty }
   \mathcal{V}_{\overrightarrow{\sigma}^\prime, \varepsilon^\prime, N}
                ( p^\prime, q^\prime ).
   \end{align*}
   Moreover, from Lemma \ref{tech:main:01} for any  
   $ p , q   \in \cP_2^\varepsilon ( n )$
   we have that
   $ \displaystyle \lim_{ N \rightarrow \infty }
   \VN= 0 $
   unless
   $ p _{| [ m ] } = \widetilde{ p}_{ m } $
   and
   $ q_{| [ m ] } = \widetilde{q}_{ m } $.
   
    The case 
    $ \varepsilon_1 = \ast $ 
    is similar. We let
   \begin{align*}
      p(s) = \left\{ 
      \begin{array}{l l}
      \widetilde{q}_{ m} ( s ) & \textrm{ if } s \leq m \\
       p^\prime ( s- m )  & \textrm{ if } s \geq m
      \end{array}
       \right.
       \textrm{, respectively }
       q( s ) = \left\{
        \begin{array}{l l}
          \widetilde{p}_{ m} ( s ) & \textrm{ if } s \leq m \\
           q^\prime ( s- m )  & \textrm{ if } s \geq m
          \end{array}
       \right.
   \end{align*} 
 and using equation (\ref{eq:u:frcm:1}) we obtain
  \begin{align*}
  \lim_{ N \rightarrow \infty } \VN
  =
  \kappa_{m}(  x_1^\ast, x_1, \dots, x_1^\ast, x_1 )
  \cdot
   \lim_{N \rightarrow \infty }
  \mathcal{V}_{\overrightarrow{\sigma}^\prime, \varepsilon^\prime, N}
               ( p^\prime, q^\prime ).
  \end{align*}  
  Finally, in this case  Lemma \ref{tech:main:01} gives that 
  that for any  
     $ p, q  \in \cP_2^\varepsilon ( n )$
     we have that
     $ \displaystyle \lim_{ N \rightarrow \infty }
     \VN = 0 $
     unless
     $ p_{| [ m ] } = \widetilde{q}_{ m } $
     and
     $ q_{| [ m ] } = \widetilde{p}_{ m } $.
 \end{proof}

\section{Examples}

 In this section we shall apply the results from the previous section to two classes of particular permutations on the entries of square matrices: the partial transpose with growing number of blocks and the mixing map. Moreover we give an example of a sequence of permutations which does not satisfy our assumptions, the partial transpose with fixed number of blocks.
 
 We define the partial transpose 
 $ \Gamma_{ b, d}  \in \mathcal{S}( [ bd ]^2 ) $
 as follows.
 Fix two positive integers 
 $ b $ and $ d $ 
 and let 
 $ M = bd $.
 First consider the bijection
 $ \varphi_{b, d}: 
 [ M]^2 \rightarrow \big( [ b ] \times [ d ] \big)^2 $
 given by
 $ \varphi_{ b, d } ( i, j) 
 = ( \alpha_1, \beta_1, \alpha_2, \beta_2 ) $
 whenever
  \begin{align*}
  (i, j) = \big(
   ( \alpha_1 - 1 )d + \beta_1, ( \alpha_2 -1 ) d + \beta_2 
   \big).
  \end{align*}
 Then define
   $ \rho: \big( [ b ] \times [ d ] \big)^2 
   \rightarrow
   \big( [ b ] \times [ d ] \big)^2 $
   by
   \begin{align*}
   \rho ( \alpha_1, \beta_1, \alpha_2, \beta_2 ) = 
   ( \alpha_1, \beta_2, \alpha_2, \beta_1).
   \end{align*}
   The $ (b, d) $-partial transpose is the permutation 
   \begin{align*}
   \Gamma_{b, d} =
    \varphi_{b, d}^{ -1} \circ \rho \circ \varphi_{ b, d }.
   \end{align*}

Intuitively, each
 $ bd \times bd $
 square matrix can be seen as a
 $ b \times b $
 whose entries are
  $ d \times d $ 
  block matrices.
  Its 
  $ (b, d) $-partial transpose is obtained by taking the matrix transpose of each of the 
  $ b^2 $ 
  blocks, but keeping the position of the blocks.

  With the notations above, we have the following result.
  
  \begin{example}\label{thm:partialtranspose}
   Suppose that
    $ ( b_N )_N $ 
   and 
   $ (d_N )_N $ 
   are two non-decreasing sequences of positive integers such that
   $ \displaystyle
   \lim_{ N \rightarrow \infty} b_N 
   = \lim_{ N \rightarrow \infty } d_N = \infty $.
   Then
   $ U_N^{ \Gamma_{b_N, d_N}}$
   is asymptotically circular and free from
   $ U_N $.
  \end{example}
  \begin{proof}
   By Theorem \ref{thm:main:2} and Remark \ref{c:c1c2},
    we only need to show that 
   $ \big( \Gamma_{b_N, d_N}\big)_N $
   satisfies  condition
   (C).
  For any positive integers 
  $ b, d $
  we have that
   \begin{align*}
   Y_{1, bd} (\Gamma_{b, d}, \Gamma_{b, d})
   & = | \big\{
    (i, j, k) \in [ bd ]^3:\ 
  \pi_1 \circ \Gamma_{b, d} ( i, j)
    \in\{  \pi_1 \circ \Gamma_{b, d}( i, k), \pi_1 \circ \Gamma_{b, d}(k, j) \}
    \big\}|\\
    = &
    |\big\{
    (\alpha_s, \beta_s)_{ 1 \leq s \leq 3 } \in
     ( [b] \times [ d ])^3 :\
     ( \alpha_1, \beta_2 ) \in \{ ( \alpha_1, \beta_3 ), ( \alpha_3, \beta_2 )\}
    \big\}|\\
    = &
    |\big\{
        (\alpha_s, \beta_s)_{ 1 \leq s \leq 3 } \in
         ( [b] \times [ d ])^3 :\
         \beta_2 = \beta_3 \textrm{ or  } \alpha_1 = \alpha_3 
        \big\}|\\
       = & b^3 d^3 \left( \frac{1}{b} + \frac{1}{d} \right).
   \end{align*}
   
   Similarly,
   $ \ds 
    Y_{2, bd} ( \Gamma_{ b, d }, \Gamma_{ b, d} )
    = b^3 d^3 \big( \frac{1}{b} + \frac{1}{d} \big) $.
    
    Hence,
    \begin{align*}
   \lim_{ N \rightarrow \infty }
   ( b_Nd_N )^{ -3} Y_N ( \Gamma_{b_N, d_N} ) = \lim_{ N \rightarrow \infty }\frac{2}{b_N} + \frac{2}{d_N} = 0.
    \end{align*}
  \end{proof}
  
  With the notations from above, 
  let
  $ \mu_N : [ N ]^4 \rightarrow [ N ]^4 $
  given by
  $ \mu_N ( a, b, c, d ) = ( a, c, b, d ) $.
  We define the `mixing map' 
  $ m_N : [ N^2 ]^2 \rightarrow [ N^2 ]^2 $
  via (see \cite{MandLinowZycz}) :
  \begin{align*}
  m_N = \varphi_{ N, N}^{ -1} \circ \mu_N \circ \varphi_{N , N} .
  \end{align*}
 As in the case of the partial transpose, we have the following result.
 
 \begin{example}\label{thm:mixingmap}
 The random matrix
 $ U_N^{ m_N} $
 is asymptotically (as 
 $ N \rightarrow \infty $) 
 circularly distributed; furthermore, the family 
 $\{ U_{ N^2 }, U_{ N^2 }^{ \Gamma_{N, N}},
  U_{ N^2 }^{\mu_N}\} $
 is asymptotically free. 
 \end{example}
    
\begin{proof}
It is straightforward to check that  
   $ ( m_N )_N $
   satisfies  condition
    (C).
We have that 
     \begin{align*}
  Y_1(m_N, m_N  )  &   = 
  | \big\{
  ( i, j, k ) \in [ N^2 ]^3: \pi_1 \circ m_N ( i, j)
  \in\{  \pi_1 \circ m_N ( i, k), \pi_1 \circ m_N (k, j) \}
  \big\}| \\
  &  =  
  | \{
  (\alpha_1, \beta_1, \alpha_2, \beta_2, \alpha_3, \beta_3) \in [ N ]^6: (\alpha_1, \alpha_2) \in
   \{ ( \alpha_1, \alpha_3),   (\alpha_3, \alpha_2 ) \}
  \big\} |\\
  &  = 
    | \{
    (\alpha_1, \beta_1, \alpha_2, \beta_2, \alpha_3, \beta_3) \in [ N ]^6:  \alpha_3 \in
     \{  \alpha_1, \alpha_2\}
    \big\} |\\
  & =  2 N^5,
     \end{align*}
 while, similarly
      \begin{align*}
   Y_2 ( m_N, m_N ) &   = 
   | \big\{
    ( i, j, k ) \in [ N^2 ]^3: \pi_2 \circ m_N ( i, j)
    \in\{  \pi_2 \circ m_N ( i, k), \pi_2 \circ m_N (k, j) \}
    \big\}| \\
     = & 
     | \big\{
  (\alpha_1, \beta_1, \alpha_2, \beta_2, \alpha_3, \beta_3) \in [ N ]^6:\
  (\beta_1, \beta_2) \in 
 \{ 
 (\beta_1, \beta_3), ( \beta_3, \beta_2)
  \}
    \big\} |\\
    &  = 
        | \{
        (\alpha_1, \beta_1, \alpha_2, \beta_2, \alpha_3, \beta_3) \in [ N ]^6:  \beta_3 \in
         \{  \beta_1, \beta_2\}
        \big\} |
      =  2 N^5,      
         \end{align*} 
   hence
   $ \displaystyle 
   \lim_{N \rightarrow \infty }
    \frac{1}{(N^2)^3} Y ( m_N, m_N ) = 0, 
   $.
   
   It remains to check that the pair
    $(m_N, \Gamma_{N, N})$ 
    satisfies
     (C3).
     The computations are very similar to the ones above. That is
    \begin{align*}
    X_N(\mu_N, \Gamma_{N, N})
    & =| \big\{ (\alpha_s, \beta_s)_{1\leq s \leq 3} \in [ N ]^6 :\ \\
     & \hspace{4cm}
    ( \alpha_1, \alpha_2, \beta_1, \beta_2)
     \in 
     \{ 
     (\alpha_1, \beta_3, \alpha_3, \beta_1),
     (\alpha_3, \beta_2, \alpha_2, \beta_3)\}\}
    \big\} |\\
    & = 2N^3 = o(N^4),
    \end{align*}
 and also
    \begin{align*}
    Y(m_N, \Gamma_{N, N})
     & =| \big\{ (\alpha_s, \beta_s)_{1\leq s \leq 3} \in [ N ]^6 :\ 
     (\alpha_1, \alpha_2) 
     \in \{
      ( \alpha_1, \beta_3), (\alpha_3, \beta_2)
      \}\\
      &\hspace{7cm}\textrm{ or } ( \beta_1, \beta_2) 
     \in \{ 
     ( \alpha_3, \beta_1), ( \alpha_2, \beta_3)
     \}
     \big\} \\
     &  = o(N^6).
    \end{align*}
\end{proof}   

 With the notations from the beginning of this section, the matrix transpose can be seen as  the 
$ (1, N) $-partial transpose. 
It does not satisfy (C1), as it transforms any two entries from the same column into entries from the same row. Yet, as shown in \cite{MingoPopaUnitarlyInvariant2016}, 
$U_N $ 
and 
$U_N^t $
are asymptotically free. 
More generally, the relation (C1) is not satisfied by
$ \Gamma_{b, N} $ 
whenever 
$ b $ 
is a fixed and
$ N \rightarrow \infty $,
since
the computation from the proof of Example \ref{thm:partialtranspose} gives that
\begin{align*}
(bN)^{ -3} Y^{(r)}_N ( \Gamma_{b, N}) = \frac{1}{b}\neq 0.
\end{align*} 
As stated in the Introduction, partial transposes with fixed numbers of blocks have been previously studied in the framework of Wishart random matrices (as in \cite{BaNec} and\cite{MingoPopaWishart1}). We will describe some properties of permuted Haar unitaries by partial transposes with fixed blocks in the example below.
\begin{example}\label{d-part-transp}
   Let
    $ b $ 
    be a (fixed) positive integer.
    As in Section 5, we denote by 
    $ \Gamma_{b, N} $
    the partial transpose of parameters 
    $ b $ and $ N $
    on $ bN \times bN $ 
    matrices.
    
     We shall show that the asymptotic (as 
     $ N \rightarrow \infty $)
      $ \ast $-distribution of
    $ U^{ \Gamma_{b, N}}_{bN} $ 
    is R-diagonal with the alternating free cumulants of order
    $ 2n $ 
     equal to 
     $ b^{ 2-2n}  ( -1 )^{ n -1} C_{ n -1 } $ 
     and that
     $ U_{bN} $
     and
     $ U^{ \Gamma_{b, N}}_{bN} $ 
     are asymptotically free.
     In particular, the limit
     $ \ast$-distribution of
     $ b \cdot U_{ bN}^{ \Gamma_{b, N}} $
     equals that of a sum of
     $ b^2 $
     free Haar unitaries.
\end{example}
\begin{proof}
   For any positive integers 
   $ b, d $
   and 
   $ N = bd $,
   we have that
   \begin{align*}
   X_N(\Gamma_{b, d}. \Gamma_{b, d})& = 
  | \{ ( i_1, i_2)  \in [ bd ]^2: 
   \pi_1 \circ \Gamma_{ b, d } ( i_1, i_2) = i_1 \textrm{ or }
   \pi_2 \circ \Gamma_{ b, d } (i_1, i_2 ) = i_2 \} |\\
   = &
   | \{ 
   ( \alpha_1, \beta_1, \alpha_2, \beta_2)  \in 
   \big( [ b] \times [ d ]  \big)^2  : 
    ( \alpha_1, \beta_1) = ( \alpha_1, \beta_2 ) \textrm{ or }
    ( \alpha_2, \beta_2 ) = ( \alpha_2, \beta_1 )
   \} |\\
    = & 
     | \{ 
     ( \alpha_1, \beta_1, \alpha_2, \beta_2)  \in 
     \big( [ b] \times [ d ]  \big)^2  :  \beta_1 = \beta_2 \}|
     = b^2 d . 
   \end{align*} 
  So
  $ \big(  \Gamma_{b, N} \big)_N $ 
  satisfies condition 
  (C2)
  since
  \begin{align*}
  \lim_{N \rightarrow \infty }\frac{ X_{bN} ( \Gamma_{b, N})}{ ( bN)^2 } = \lim_{ N \rightarrow \infty } N^{ -1} = 0.
  \end{align*}
 
         As before, let 
     $ n $ 
     be a positive integer,
     $ \epsilon \rightarrow \{ 1, \ast \} $
     and
     $ \theta
      \rightarrow \{ \textrm{Id}_{bN} , \Gamma_{ b, N }\} $.
     Since both alternate free cumulants of order 
     $ 2r $ 
     of a Haar unitary equal 
     $ (-1)^{ r -1} C_{ r -1} $,
     it suffices to show the following  statement. 
     
     \emph{Fix 
      $ m \leq n $, 
     and denote by 
     $ \overrightarrow{\sigma}^\prime$
    and
    $ \varepsilon^\prime $
    the restrictions of
    $\overrightarrow{\sigma} $,
    respectively 
  $ \varepsilon$
  to
  $ [ n ] \setminus [ m ] $.
  Also, fix 
  $ \tau $ 
  a non-crossing partition on 
  $ [ n ] \setminus [ m ] $
  and let
  $ \omega = [ m ] \oplus \tau $. Then
   }
   \begin{align*}
   \lim_{ N \rightarrow \infty} 
   \sum_{ \substack{ p, q \in \cP_2^{ \epsilon }( n )\\
   p \vee q  = [ m ] \oplus \tau } }
   \mathcal{V}_{\overrightarrow{\sigma}, \varepsilon, bN }{(p,q)}
   = k(\varepsilon, \theta, m ) \cdot
   \lim_{ N \rightarrow \infty }
    \sum_{ \substack{ p^\prime, q^\prime 
      \in \cP_2^{ \epsilon^\prime }( n - m ) \\
      p^\prime \vee q^\prime = \tau } }
      \mathcal{V}_{\overrightarrow{\sigma}^\prime, \varepsilon^\prime, bN}
                   ( p^\prime, q^\prime )
   \end{align*}
   \emph{where}
   \begin{align*}
   k(\varepsilon, \theta, m ) = \left\{
   \begin{array}{l}
    (-1)^{\frac{m}{2} -1} C_{ \frac{m}{2}-1 }, \
    \textrm{if $m $ is even, $ \epsilon_{| [ m ] } $ is alternating,
     $ \theta_{| [ m ] } = \textrm{Id}_{bN} $ }\\
     b^{ 2 - m }
     \cdot
     (-1)^{\frac{m}{2} -1} C_{ \frac{m}{2}-1 }, \ 
     \textrm{if $m $ is even, $ \epsilon_{| [ m ] } $ is alternating,
          $ \theta_{| [ m ] } = \Gamma_{b, N} $ }
     \\
     0, \ \textrm{ otherwise.}
   \end{array}
   \right.
   \end{align*}
   
   If 
   $ m $
    is not even, then the statement is vacuously true, since 
   $ p \vee q $
   has only blocks with even number of elements. 
   If
   $ \epsilon $ 
   is not alternating on 
   $ [ m ] $, 
   then the result follows trivially from Lemma 
   \ref{lemma:c2}. 
   If 
   $ \theta $
   is not constant on 
   $ [ m ] $, 
   since the sequence
   $ ( \Gamma_{ b, N})_N $
   satisfies condition
   (C2)
   we have that the limit in the left hand side cancels, 
   hence the assertion holds true.
   Moreover, if 
   $ m $ 
   is even,
   $ \epsilon $
   is alternating on 
   $ [ m ] $
   and
   $ \theta $ 
   is constant
   $ \textrm{Id}_N $
   on $ [ m ] $, 
   then again the result follows trivially from equations (5) and (6) in Section 3.
   
   It remains to show the property for 
   $ m $
   even, 
   $ \epsilon $
   alternating on 
   $ [ m ] $
   and
   $ \theta $
   constant 
   $ \Gamma_{ b, N } $
   on 
   $ [ m ] $. 
   In this framework, with the notations from Remark \ref{remark:3:1},  Lemma \ref{tech:main:01} gives that the restrictions of 
   $ p $, 
   respectively
   $ q $
   to the set
   $ [ m ] $
   are either
   $ \widetilde{p}_{m} $, respectively $ \widetilde{q}_{m} $
   or
   $ \widetilde{q}_{m} $, respectively $ \widetilde{p}_{m} $.
   (Remember that 
   $ \widetilde{p}_{m} $ 
   and 
   $ \widetilde{q}_{m} $
   are given by
   $ \widetilde{p}_{m} (m) =1 $
   and
   $ \widetilde{p}_{m} (2t ) = 2t + 1 $
   respectively
   $ \widetilde{q}_{m} (2t) = 2t - 1 $
   for 
   $ t = 1, 2, \dots, \displaystyle \frac{m}{2} $).
   
   For each 
   $ s = 1, 2, \dots, m $,
   write
    $ i_s  = ( \alpha_s -1) N + \beta_S $
    with
    $ \alpha_s \in [ b ] $
    and
    $ \beta_s \in [ N ] $.
   
   Suppose first that
    $ \epsilon_1 = 1 $. 
    Then
    \begin{align*}
    (k_s, l_s) = \left\{
    \begin{array}{ll}
   \big( ( \alpha_s -1 ) N + \beta_{s + 1} , ( \alpha_{ s + 1} - 1 )N + \beta_s \big)&
     \textrm{ if $ s $ is odd }\\
     \big( (\alpha_{ s + 1} - 1 ) N + \beta_s ), ( \alpha_s -1)N + \beta_{s+1}
     \big) & 
     \textrm{ if $ s $ is even. }
    \end{array}
    \right.
    \end{align*}
  
  If
   $ p_{ | [ m ] } = \widetilde{p}_{m} $ 
  and
   $ q_{ | [ m ] } = \widetilde{q}_{m} $
   then the conditions
   $ k_s = k_{ p(s) } $ 
   and 
   $ l_s = l_{ q (s ) } $
   become
    \begin{align*}
    \left\{
   \begin{array}{l l}
    \alpha_1 = \alpha_{ m + 1} \\
    \beta_{s} = \beta_{ s + 2 } \textrm{ for each } $ s = 1, 2, \dots, m-1 $ 
    \end{array} 
     \right.
    \end{align*}
    hence
     $ \mathcal{A}_{\overrightarrow{\sigma}, \varepsilon, bN }^{(p,q)} \leq b^m N^{ \frac{m}{2} } =o\big((bN)^{\frac{m}{2}} \big),
     $
     therefore
     $ \displaystyle 
      \lim_{ N \rightarrow \infty } \mathcal{V}_{\overrightarrow{\sigma}, \varepsilon, bN }{(p,q)} = 0 $.
      
   If
     $ p_{ | [ m ] } = \widetilde{q}_{m} $ 
    and
     $ q_{ | [ m ] } = \widetilde{p}_{m } $
    the conditions
     $ k_s = k_{ p(s) } $ 
     and 
     $ l_s = l_{ q (s ) } $
     become    
        \begin{align}\label{b:catalan:1}
         \left\{
        \begin{array}{l l}
          \alpha_{s} = \alpha_{ s + 2 } \textrm{ for each } $ s = 1, 2, \dots, m-1 $ \\
          \beta_1 = \beta_{ m + 1}. 
         \end{array} 
          \right.
         \end{align} 
  In particular, since 
  $ m $
  is even,
  $ \alpha_1 = \alpha_{m+1} $,
  therefore
  \begin{align*}
   i_1 = ( \alpha_1 -1)N + \beta_1 = 
   ( \alpha_{ m+1} -1) N + \beta_{ m +1 } = i_{ m + 1}.
   \end{align*} 
   So, if
       $ p = \widetilde{q}_{m} \oplus p^\prime $
       and
       $ q = \widetilde{p}_{m} \oplus q^\prime $
       for some
     $ p^\prime, q^\prime \in \cP_2^{ \epsilon^\prime } ( n - m )$
  we have that
  each $(m-n)$-tuple
     $ (i_s, j_s)_{ m+1 \leq s \leq n } $ 
     from
     $ \mathcal{A}_{\overrightarrow{\sigma}^\prime, \varepsilon^\prime, bN}^{ ( p^\prime, q^\prime )} $
  uniquely determines 
  $ \alpha_1 $
  and
  $ \beta_1 $  
  (since
  $ i_{ m +1 } $, 
  that is 
  $i_1$,
   is given). Hence we have that
  \begin{align*}
  | \mathcal{A}_{\overrightarrow{\sigma}, \varepsilon, bN}^{ (p, q)}| = &
  |\{ ( \alpha_s, \beta_s)_{ 2\leq s \leq m } : \ \alpha_s, \beta_s \textrm{ satisfy conditions (\ref{b:catalan:1})}\}  | 
  \cdot
  | \mathcal{A}_{ \overrightarrow{\sigma}^\prime, \varepsilon^\prime, bN}^{ ( p^\prime, q^\prime )}|\\
   = &
   b N^{ m -1} \cdot
     | \mathcal{A}_{ \overrightarrow{\sigma}^\prime, \varepsilon^\prime, bN}^{ ( p^\prime, q^\prime )}|.
  \end{align*}
  
We will need some more details on the unitary Weingarten function,
 $ \Wg_N $, introduced in Section \ref{section:background}. First, since for fixed
  $ p, q $ 
  pair partitions on the set
  $ [ n ] $,
  and
  $ N > n $, 
  the map 
  $ N \mapsto \Wg_N (p, q)$ is a rational function,
  let us further extend the notations from Remark \ref{remark:2.5} and write
   \begin{align*}
    \Wg_N ( p, q ) = C_{ p, q } \cdot N^{ - n + | p \vee q | } + D_{ p, q} \cdot N^{ - n + | p \vee q | -2 }  + o (   N^{ - n + | p \vee q | - 2 }  ),
   \end{align*}
 where $p \vee q = \{B_1, \dots, B_k\}$, 
 $\ds C_{p,q} = \prod_{i = 1}^{k} (-1)^{|B_i|/2 -1} C_{|B_i|/2 -1}$, and
  $ D_{ p, q} $ 
  depends only on 
  $ p $ and $ q $.
    
With this notation, if 
 $ p \vee q = [ m ] \oplus (p^\prime \vee q^\prime)  $
 then
 $ | p \vee q | = |p^\prime \vee q^\prime| + 1 $
 and
 \begin{align*}
 C_{p, q} = (-1)^{ \frac{m}{2} -1} C_{ \frac{m}{2} -1} 
 \cdot
 C_{ p^\prime, q^\prime}. 
 \end{align*}  
 Therefore, with the notations from Section 3,   
\begin{eqnarray*}
 \mathcal{V}_{\overrightarrow{\sigma}, \varepsilon, N }{(p,q)}
  & = &  
  \Wg_{ bN} (p, q) \cdot \frac{1}{b N} | \mathcal{A}_{\overrightarrow{\sigma}, \varepsilon, bN}^{ ( p, q) } | \\
   & = &
   ( bN )^{ - n + | p \vee q | } \cdot C_{ p, q}  \frac{1}{b N} | \mathcal{A}_{\overrightarrow{\sigma}, \varepsilon, bN}^{ ( p, q) } |
   + O ( N^{ -2} ) \\
   & = &
   (-1)^{ \frac{m}{2} -1} C_{ \frac{m}{2} -1} 
   \cdot
   C_{ p^\prime, q^\prime } \cdot 
   ( b N )^{ - m + 1 } \cdot
    ( b N )^{ - ( n - m ) + |p^\prime \vee q^\prime| } 
     \frac{1}{bN} \cdot b N^{ m -1} 
    | \mathcal{A}_{\overrightarrow{\sigma}^\prime, \varepsilon^\prime, bN}^{ ( p^\prime, q^\prime ) }  | \\
    && \mbox{} 
    + O( N^{ -2} )\\
    & = &
    \big[ b^{ 2- m } \cdot
    (-1)^{ \frac{m}{2} -1} 
    C_{ \frac{m}{2} -1} 
    \big]
    \cdot
    \big[C_{ p^\prime, q^\prime } ( b N )^{ - ( n - m ) + |p^\prime \vee q^\prime| }
    \cdot \frac{1}{bN}
    | \mathcal{A}_{\overrightarrow{\sigma}^\prime, \varepsilon^\prime, bN}^{ ( p^\prime, q^\prime ) }  |
    \big] + O(N^{ -2 } )\\
    & = &
    \big[ b^{ 2- m } \cdot
        (-1)^{ \frac{m}{2} -1} 
        C_{ \frac{m}{2} -1} 
        \big]
        \cdot
        \mathcal{V}_{\overrightarrow{\sigma}^\prime, \varepsilon^\prime, bN} ( p^\prime, q^\prime )
        + O(N^{ -2 } ),
 \end{eqnarray*}
 
In the case
 $ \epsilon_1 = \ast $,
 same argument gives that, if 
 $ p = \widetilde{q}_{m} \oplus p^\prime $
 and
 $ q = \widetilde{p}_{m} \oplus q^\prime $
 for some
 $ p^\prime, q^\prime
  \in \cP_2^{ \widetilde{ \epsilon } } ( n - m ) $, 
 then
 $ \displaystyle
 \lim_{N \rightarrow \infty } 
 \mathcal{V}_{\overrightarrow{\sigma}, \varepsilon, bN }{(p,q)} = 0 $,
 while if
 $ p = \widetilde{p}_{m} \oplus p^\prime $
 and
 $ q = \widetilde{q}_{m} \oplus q^\prime $,
 then
 \begin{align*}
 \mathcal{V}_{\overrightarrow{\sigma}, \varepsilon, bN }{(p,q)}
  = \big[ b^{ 2 - m } \cdot
          (-1)^{ \frac{m}{2} -1} 
          C_{ \frac{m}{2} -1} 
          \big]
          \cdot
          \mathcal{V}_{\overrightarrow{\sigma}^\prime, \varepsilon^\prime, bN} ( p^\prime, q^\prime )
          + O(N^{ -2 } ),
 \end{align*}
and the proof is complete.
\end{proof}

\section{Second order fluctuations and almost sure convergence}

In this section we prove Theorem \ref{thm:12} from the introduction, the result of this section shows at the same time that convergence from Theorem \ref{thm:14} can be also stated in the almost sure sense. We employ the technique explained in detail in Chapters 4 and 5 of \cite{MingoSpeicher}. Namely we shall show that

 \begin{multline}\label{eqn:covBound}
\limsup_{ N \rightarrow \infty } 
\Big|\cov \left( \Tr ( 
U_N^{ ( \sigma_{1, N}, \varepsilon_1 ) }
\cdots
U_N^{ ( \sigma_{m, N}, \varepsilon_m ) }
), 
\Tr (
U_N^{ ( \sigma_{ m + 1, N}, \varepsilon_{ m + 1} ) }
\cdots
U_N^{ ( \sigma_{ m + r, N}, \varepsilon_{ m + r} ) }
) 
\right) \Big|  \\
< C (m, r, \varepsilon)
\end{multline}
for any permutations $\sigma_{1,N},\ldots,\sigma_{m+r,N}\in \mathcal{S}\left([N]^2\right)$, $ \varepsilon : [ n ] \rightarrow \{ 1, \ast \} $
and
$ \theta : [ n ] \rightarrow \{ \textrm{Id}, \sigma_N \} $.
Then with notations as in Section 4 let 
\begin{align*}
Z_n = \frac{1}{N}\Tr \big(
U^{ ( \theta_1, \varepsilon_1 ) }
U^{ ( \theta_2, \varepsilon_2 ) }
\cdots
U^{( \theta_n, \varepsilon_n )}
\big).
\end{align*}
We get for any $\epsilon>0$
\begin{align*}
\P\left(\left|Z_n-\E (Z_n)\right|>\epsilon\right)<\frac{\mathrm{Var}\left(\Tr \big(
	U^{ ( \theta_1, \varepsilon_1 ) }
	U^{ ( \theta_2, \varepsilon_2 ) }\cdots
	U^{( \theta_n, \varepsilon_n )}
	\big)
	\right)}{\epsilon^2 N^2}
\end{align*}
and $\eqref{eqn:covBound}$ gives us that $\mathrm{Var}\left(\Tr \big(
U^{ ( \theta_1, \varepsilon_1 ) }
U^{ ( \theta_2, \varepsilon_2 ) }\cdots
U^{( \theta_n, \varepsilon_n )}
\big)
\right)$ stays bounded.
Since $\E (Z_N)$ converges by Theorem \ref{thm:11}, then we get that $Z_N$ converges almost surely to the same limit almost surely, by a simple application of Borel-Cantelli Lemma. This proves Theorem \ref{thm:12}. Thus our only job in this section is to prove \eqref{eqn:covBound}.

 Let 
$ m, r $ 
be two positive even integers and 
denote by
$ \varepsilon^\prime $,
respectively  by
$ \varepsilon^{ \prime\prime }$
the restrictions of a map
$ \varepsilon: [ m + r ] \rightarrow \{ 1, \ast \} $
to the sets
 $ [ m ] $,
 respectively
 $ [  m  + r ] \setminus [ r ] $
 (as before, we shall write
 $ \varepsilon_s $
 for 
 $ \varepsilon(s) $).
 If 
 $ p_1 \in \cP_2^{ \varepsilon^\prime}(m) $
 and
 $ p_2\in \cP_2^{ \varepsilon^{ \prime\prime}}(r) $,
 we will denote by
 $ p_1 \oplus p_2 $
 the pair partition on 
 $ [ m + r ] $ 
 given by
 \begin{align*}
 p_1 \oplus p_2 (s) = 
 \left\{
 \begin{array}{ll}
 p_1(s) & \textrm{ if } s \leq m \\
 p_2 ( s -m ) & \textrm{ if } s > m.
 \end{array}
 \right.
 \end{align*}
 
 Remark that if
 $ p_1, q_1 \in \cP_2^{ \varepsilon^\prime}(m) $
 and
 $ p_2, q_2 \in \cP_2^{ \varepsilon^{ \prime\prime}}(r) $
 then
 $ (p_1 \oplus p_2) \vee  (q_1 \oplus q_2)   = 
 p_1 \vee q_1 \oplus p_2 \vee q_2 $,
 i.e.
$ p_1 \oplus p_2 \vee q_1 \oplus q_2 $
is the juxtaposition of
$ p_1 \vee q_1$  and $ p_2 \vee q_2 $.
In particular, since
the first coefficient
 $ C_{p, q} $ 
 of
  $\Wg_N (p, q) $
   depends only on the lengths of the cycles of
   $ p \vee q $,
   we have that
   \begin{align*}
   C_{ p_1\oplus p_2, q_1 \oplus q_2} = C_{ p_1, q_1}
    \cdot C_{p_2, q_2}.
\end{align*}

Adapting the techniques from \cite{MingoPopaOrth} to the present framework, we have the following result.

\begin{lemma} \label{lemma:cov}
Let 
$ m, r $ 
be two positive integers. 
Given a map
$ \varepsilon : [ m + r ] \rightarrow \{ 1, \ast \} $, 
there exist  some 
$ C(m, r, \varepsilon )$
such that for any permutations
 $\{ \sigma_{s, N}: s \in [ m + r ], N \in \mathbb{N} \} $ 
 with 
 $ \sigma_{s, N } \in \mathcal{S}( [ N ] ^2 ) $
 we have that
 \begin{align*}
  \limsup_{ N \rightarrow \infty } |
 \cov \left( \Tr ( 
 U_N^{ ( \sigma_{1, N}, \varepsilon_1 ) }
 \cdots
 U_N^{ ( \sigma_{m, N}, \varepsilon_m ) }
   ),
 \Tr (
 U_N^{ ( \sigma_{ m + 1, N}, \varepsilon_{ m + 1} ) }
 \cdots
 U_N^{ ( \sigma_{ m + r, N}, \varepsilon_{ m + r} ) }
  ) |
 \right) 
 < C (m, r, \varepsilon).
 \end{align*}
\end{lemma}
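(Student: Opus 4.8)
The plan is to expand the covariance by the Weingarten formula \eqref{eq:weingarten_3} and split the result into a ``genuinely coupled'' part and a correction coming from the failure of the Weingarten function to factorize. Write $A=U_N^{(\sigma_{1,N},\varepsilon_1)}\cdots U_N^{(\sigma_{m,N},\varepsilon_m)}$ and $B=U_N^{(\sigma_{m+1,N},\varepsilon_{m+1})}\cdots U_N^{(\sigma_{m+r,N},\varepsilon_{m+r})}$, expand $\Tr(A)$ and $\Tr(B)$ over their two separate cyclic index loops, and apply \eqref{eq:weingarten_3} to the resulting product of $m+r$ entries of $U_N$; this gives $\E(\Tr(A)\Tr(B))=\sum_{p,q\in\cP_2^\varepsilon(m+r)}\Wg_N(p,q)\,|\widehat{\mathcal A}^{(p,q)}|$, where $\widehat{\mathcal A}^{(p,q)}$ is defined exactly as $\AN$ but with the index constraints closing up into the two loops $[m]$ and $[m+r]\setminus[m]$ separately. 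I split this sum according to whether $p\vee q$ has a block meeting both loops (``$p\vee q$ coupled'') or not; in the latter case $p=p_1\oplus p_2$, $q=q_1\oplus q_2$ with $p_i,q_i$ pairings of the two loops, the constraints decouple, and $|\widehat{\mathcal A}^{(p_1\oplus p_2,q_1\oplus q_2)}|=|\mathcal A^{(p_1,q_1)}|\cdot|\mathcal A^{(p_2,q_2)}|$, where $\mathcal A^{(p_i,q_i)}$ are the single-loop sets of Section~3. Subtracting $\E(\Tr A)\E(\Tr B)$, which by the $\Tr$-version of \eqref{weigarten} equals $\sum\Wg_N(p_1,q_1)\Wg_N(p_2,q_2)\,|\mathcal A^{(p_1,q_1)}|\,|\mathcal A^{(p_2,q_2)}|$, I obtain $\cov(\Tr A,\Tr B)=(\mathrm I)+(\mathrm{II})$, where $(\mathrm I)=\sum_{p\vee q\ \mathrm{coupled}}\Wg_N(p,q)\,|\widehat{\mathcal A}^{(p,q)}|$ and $(\mathrm{II})=\sum\bigl(\Wg_N(p_1\oplus p_2,q_1\oplus q_2)-\Wg_N(p_1,q_1)\Wg_N(p_2,q_2)\bigr)|\mathcal A^{(p_1,q_1)}|\,|\mathcal A^{(p_2,q_2)}|$, the latter summed over $p_1,q_1\in\cP_2^{\varepsilon^\prime}(m)$, $p_2,q_2\in\cP_2^{\varepsilon^{\prime\prime}}(r)$.

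For $(\mathrm{II})$ each summand is $O(1)$ uniformly in the permutations: one has $|\mathcal A^{(p_1,q_1)}|\le N^{m+1-|p_1\vee q_1|}$ and $|\mathcal A^{(p_2,q_2)}|\le N^{r+1-|p_2\vee q_2|}$ by Proposition~\ref{prop:31}~$1^\mathrm{o}$ (inequality \eqref{eq:fnm}), while, since $C_{p_1\oplus p_2,q_1\oplus q_2}=C_{p_1,q_1}C_{p_2,q_2}$, the leading terms of $\Wg_N(p_1\oplus p_2,q_1\oplus q_2)$ and of $\Wg_N(p_1,q_1)\Wg_N(p_2,q_2)$ --- both of order $N^{|p_1\vee q_1|+|p_2\vee q_2|-(m+r)}$ --- cancel, so their difference is $O(N^{|p_1\vee q_1|+|p_2\vee q_2|-(m+r)-2})$; multiplying by the two cardinality bounds gives $O(1)$. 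As there are at most $|\cP_2(m)|^2|\cP_2(r)|^2$ summands, $(\mathrm{II})=O(1)$ with an implied constant depending only on $m$, $r$, $\varepsilon$.

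For $(\mathrm I)$ I will use the key estimate that whenever $p\vee q$ couples the two loops one has $|\widehat{\mathcal A}^{(p,q)}|=O(N^{(m+r)-|p\vee q|})$, i.e.\ two powers of $N$ less than the decoupled bound $N^{(m+r)+2-|p\vee q|}$. Granting this, each summand of $(\mathrm I)$ is $O(1)$ since $\Wg_N(p,q)=O(N^{|p\vee q|-(m+r)})$ by Remark~\ref{remark:2.5}, and there are at most $|\cP_2(m+r)|^2$ of them; hence $(\mathrm I)=O(1)$ with a constant depending only on $m$, $r$, $\varepsilon$, and combining with the previous paragraph proves \eqref{eqn:covBound}. (When $m+r$ is odd or $\varepsilon$ is not balanced both sums are empty and $\cov=0$; and whenever one of $\cP_2^{\varepsilon^\prime}(m)$, $\cP_2^{\varepsilon^{\prime\prime}}(r)$ is empty the term $(\mathrm{II})$ simply does not occur, so the same bound applies.) The cardinality estimate for coupled $(p,q)$ is the main obstacle; it is established by adapting the inductive counting of Section~3 (compare \cite{MingoPopaOrth}, where words without permutations are treated): one introduces the obvious two-loop analogue of $\FNm{S}$, observes that parts (1) and (2) of Lemma~\ref{fpq:1} still hold, and processes the positions in an order --- permissible since each loop is cyclic --- that brings a coupling block of $p\vee q$ to completion exactly as one passes from the first loop into the second. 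At that moment an index of the second loop gets identified with data already fixed by the first loop, and, because the two cyclic closures of the loops are then absorbed into the Weingarten constraints rather than into distinct block-closings, one gains two powers of $N$ over the decoupled count. As in Proposition~\ref{prop:31}, every exponent occurring in this argument is insensitive to the particular permutations $\sigma_{s,N}$, which is precisely what makes the final constant $C(m,r,\varepsilon)$ depend on $m$, $r$, $\varepsilon$ alone.
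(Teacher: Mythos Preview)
Your proof is correct and follows essentially the same route as the paper: the same decomposition of $\cov(\Tr A,\Tr B)$ into a coupled sum $(\mathrm I)$ and a Weingarten-correction sum $(\mathrm{II})$, the same cancellation of leading terms in $(\mathrm{II})$ via $C_{p_1\oplus p_2,q_1\oplus q_2}=C_{p_1,q_1}C_{p_2,q_2}$, and the same key bound $|\widehat{\mathcal A}^{(p,q)}|\le N^{(m+r)-|p\vee q|}$ for coupled $(p,q)$. The paper makes the last step concrete by cyclically rotating so that $m+1\in\{p(m),q(m)\}$ and then tracking $f(m-1)\to f(m)\to f(m+1)$ explicitly (the loop-closure $j_m=i_1$ and the constraint $k_{m+1}=k_m$ or $l_{m+1}=l_m$ each absorbing one power of $N$), which is exactly the mechanism your sketch describes.
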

  
  \begin{proof}
 In order to detail the manipulation of the covariance from the statement of the Theorem, we will need some additional notations, besides the notations from Section 3.
 
 First, denote by 
 $ \overrightarrow {\sigma_N}^\prime
  =
  ( \sigma_{ 1, N}, \sigma_{2, N}, \dots, \sigma_{ m, N}) $,
   $ \overrightarrow {\sigma_N}^{\prime\prime }
    =
    ( \sigma_{ m + 1, N}, \sigma_{ m+ 2, N},
     \dots, \sigma_{ m + r, N}) $
     and by
  $ \varepsilon^\prime $, 
  respectively 
  $ \varepsilon^{ \prime\prime} $
  the restriction of
  $ \varepsilon $
  to
  $ [ m ] $,
  respectively to
  $ [ m  + r ] \setminus [ r ] $;
  let
  \begin{align*}
  W_1(N, \overrightarrow {\sigma_N}^\prime, \varepsilon^\prime)
  & 
  =  \Tr ( 
   U_N^{ ( \sigma_{1, N}, \varepsilon_1 ) }
   U_N^{ ( \sigma_{2, N}, \varepsilon_2 ) }
   \cdots
   U_N^{ ( \sigma_{m, N}, \varepsilon_m ) }
     ),\\
     W_2 (N, \overrightarrow {\sigma_N}^{\prime\prime }, \varepsilon^{ \prime\prime} )
     & = \Tr (
      U_N^{ ( \sigma_{ m + 1, N}, \varepsilon_{ m + 1} ) }
      \cdots
      U_N^{ ( \sigma_{ m + r, N}, \varepsilon_{ m + r} ) }
       ).
  \end{align*}
 
 For 
 $ p, q \in \cP_2^{ \varepsilon} ( m + r ) $, 
 define
 \begin{align*}
\mathcal{B}_{\overrightarrow{\sigma_N}, \varepsilon, N }^{ (m, r)}(p, q) = \big\{  
(i_1, j_1, & \dots, i_{ m + r}, j_{ m+r}) \in [ N ]^{ 2( m + r )}
:\
 j_s = i_{ s + 1} \textrm { for }
s \notin \{ m, m+r \} \\
 &
 j_{ m } = i_1, j_{ m + r} = i_r,
 \textrm { and } k_t = k_{ p (t) }, l_s = l_{ q ( t )}
 \textrm{ for all } t \in [ m + r ]
\big\}
\end{align*} 
In particular, 
 \begin{align*}
 \mathcal{B}_{\overrightarrow{\sigma_N}, \varepsilon, N }^{(m, r)} ( p_1 \oplus p_2, q_1 \oplus q_2)
 = 
 \mathcal{A}_{\overrightarrow{\sigma_N}^\prime, \varepsilon^\prime, N}^{(p_1, q_1)}
  \cdot
 \mathcal{A}_{\overrightarrow{\sigma_N}^{\prime\prime}, \varepsilon^{\prime\prime}, N}^{(p_2, q_2)}.
 \end{align*}
 With these notations,  formula (\ref{weigarten}) gives that 
 $ \cov   ( W_1(N, \overrightarrow {\sigma_N}^\prime, \varepsilon^\prime), 
  W_2 (N, \overrightarrow {\sigma_N}^{\prime\prime }, \varepsilon^{ \prime\prime} ) $
  equals
 \begin{align*}
 \sum_{ p, q \in \cP_2^{\varepsilon} ( m + r ) } \kern-1em
 \Wg_N( p, q ) \cdot
 | \mathcal{B}_{\overrightarrow{\sigma_N}, \varepsilon, N }^{ ( m, r)} ( p, q ) | \ 
  - \mbox{} \kern-1em
  \sum_{
  \substack{
  p^\prime, q^\prime \in \cP_2^{ \varepsilon^{ \prime }} ( m )  \\
  p^{ \prime\prime}, q^{ \prime \prime } \in
    \cP_2^{ \varepsilon^{ \prime \prime}}( r )}
    } \kern-1em
  \Wg_N( p^\prime, q^\prime ) 
  \cdot
    \Wg_N( p^{ \prime\prime}, q^{ \prime\prime})
  \cdot | \mathcal{A}_{\overrightarrow{\sigma_N}^{\prime}, \varepsilon^{\prime}, N}^{(p^\prime, q^\prime)}|
  \cdot | \mathcal{A}_{\overrightarrow{\sigma_N}^{\prime\prime}, \varepsilon^{\prime\prime}, N}^{(p^{ \prime\prime}, q^{ \prime\prime } )} |.
 \end{align*}
 
 Remark that 
 $ \cP_2^{\varepsilon}(m+r) $ 
 is the disjoint union of the sets
  \begin{align*}
  \cP_2^{\varepsilon}( m, r) = \{ p \in \cP_2^{\varepsilon}(m+r):\ 
   p(s) > m  \textrm{ for some } s \leq m \}
    \end{align*}
    and
 \begin{align*}
 \{ p_1 \oplus p_2:\ p_1 \in \cP_2^{\varepsilon^\prime}(m), \
 p_2 \in \cP_2^{\varepsilon^\prime\prime} (r)  \}
 \end{align*}   
    so the summation above equals
    \begin{align*}
    \sum_{ p, q \in \cP_2^{\varepsilon}( m , r)} 
    &
  \Wg_N( p, q ) \cdot
  | \mathcal{B}_{\overrightarrow{\sigma_N}, \varepsilon, N }^{ ( m, r)} ( p, q ) |   \ 
  +\\
  & \kern-2em \sum_{
    \substack{
    p_1, q_1 \in \cP_2^{ \varepsilon^{ \prime }} ( m )  \\
    p_2, q_2 \in
      \cP_2^{ \varepsilon^{ \prime \prime}}( r )}
      }
   \Big[   
      | \mathcal{A}_{\overrightarrow{\sigma_N}^{\prime}, \varepsilon^{\prime}, N}^{(p_1, q_1)}|
          \cdot | \mathcal{A}_{\overrightarrow{\sigma_N}^{\prime\prime}, \varepsilon^{\prime\prime}, N}^{(p_2, q_2) } |
  \big(
  \Wg_N ( p_1 \oplus p_2, q_1 \oplus q_2)
  - 
    \Wg_N( p_1, q_1 ) 
    \cdot
      \Wg_N( p_2, q_2)
   \big)
    \Big].
    \end{align*}
    
 First, fix
 $ p_1, q_1 \in \cP_2^{ \varepsilon^\prime}( m) $
 and
 $ p_2, q_2 \in \cP_2^{ \varepsilon^{ \prime\prime}}(r) $
 and
 let
 $ p = p_1 \oplus p_2 $,
 $ q =  q_1 \oplus q_2 $. 
   Then
   \begin{align*}
   \Wg_N ( p, q ) &
    - 
      \Wg_N( p_1, q_1 ) 
      \cdot
        \Wg_N( p_2, q_2)=\\
     &   \big( D_{ p, q } -
        C_{ p_1, q_1} D_{ p_2. q_2} - D_{p_1, q_1} C_{p_2, q_2})
        \cdot
   N^{- ( m + r ) + |p \vee q | -2}
        + o \big( 
   N^{- ( m + r ) + |p \vee q | -2}     
         \big). 
   \end{align*} 
    Since, as seen in Section 3, 
    $ | \mathcal{A}_{\overrightarrow{\sigma_N}^{\prime}, \varepsilon^{\prime}, N}^{(p_1, q_1)}| \leq N^{ m + 1 - |p_1 \vee q_1 |} $
    and
  $ | \mathcal{A}_{\overrightarrow{\sigma_N}^{\prime\prime}, \varepsilon^{\prime\prime}, N}^{(p_2, q_2)}| \leq N^{ r + 1 - |p_2 \vee q_2 |} $,
  we have that
  \begin{align*}
  | \mathcal{A}_{\overrightarrow{\sigma_N}^{\prime}, \varepsilon^{\prime}, N}^{(p_1, q_1)}|
    \cdot 
 | \mathcal{A}_{\overrightarrow{\sigma_N}^{\prime\prime}, \varepsilon^{\prime\prime}, N}^{(p_2, q_2)}|
            \cdot
            \big|
      \Wg_N ( p, q ) &
         - 
           \Wg_N( p_1, q_1 ) 
           \cdot
             \Wg_N( p_2, q_2)       
            \big|\\
            \leq &
   \big|
  D_{ p, q } -
          C_{ p_1, q_1} D_{ p_2. q_2} - D_{p_1, q_1} C_{p_2, q_2}
          \big|
          + o\big(N^{0}\big).        
  \end{align*}   
  
  Next, fix
  $ p, q \in \cP_2^{ \varepsilon }(m, r) $.
  It suffices to show that
  \begin{align}\label{b:ineq}
  | \mathcal{B}_{\overrightarrow{\sigma_N}, \varepsilon, N }^{(m, r)} (p, q) |
   \leq
    N^{ m + r -|p \vee q |}
  \end{align}
  and we will obtain that
  \begin{align*}
  \big|
  \Wg_N (p, q) 
  \big|
  \cdot
  | \mathcal{B}_{\overrightarrow{\sigma_N}, \varepsilon, N }^{(m, r)} ( p, q )
  |
  \leq
  | C_{p, q}| + O(N^{-2}).
  \end{align*}
  
  Since 
  $ | \mathcal{B}_{\overrightarrow{\sigma_N}, \varepsilon, N }^{(m, r)} (p, q)  | $
  is invariant under circular permutations of the set 
  $ [ m ] $ 
  and of the set
  $ [ m + r ] \setminus [ r ] $,
  we can suppose that 
  $ m+ 1\in \{ p(m), q(m) \} $.
  In particular, 
  $ m $ is not the largest element in a cycle of
  $p\vee q $.
  
 For 
 $ s \in [ m + r ] $,
 denote
  \begin{align*}
 f(s) = | \{(i_1, j_1, \dots, i_s, j_s):\ \textrm{there exists }
 &
 (i_{s+1}, j_{s+ 1}, \dots, i_{ m + r}, j_{m + r}) \\
 &\textrm{ such that } 
 (i_1, j_1, \dots, i_{ m + r}, j_{m + r }) \in \mathcal{B}_{\overrightarrow{\sigma_N}, \varepsilon, N }^{(m, r)}(p, q) \} |. 
   \end{align*}
   
  The argument from the proof of Lemma \ref{fpq:1} gives that
 \begin{align*}
    f(m -1) \leq
     N^{m -| \{ B \in p \vee q \ :\ \max(B) \leq m -1\} | }.
 \end{align*} 
 Also, by construction,
 $( i_m, j_ m) = (j_{ m -1}, i_1 ) $,
 therefore
  $
   f(m) \leq f( m -1) 
  $.
  
   Suppose that 
   $ m+ 1 $ 
   is the largest element in a cycle 
   $ B $
   of 
   $ p \vee q $.
   Then the argument from the proof of Lemma \ref{fpq:1}(2) gives that
   \begin{align*}
   f(m+1) = f(m) \leq N^{m + 1-| \{ B \in p \vee q : \ \max(B) \leq m +1\} | }
   \end{align*}
   and
   (\ref{b:ineq}) follows as in Lemma \ref{fpq:1}(3).
   
   If 
   $ m + 1 $ 
   is not the largest element of any cycle of 
   $ p \vee q $, utilizing
   the assumption 
   $ m+ 1\in \{ p(m), q(m) \} $
   gives that either
   $ k_{m+1} = k_m  $
   or 
   $ l_{m +1}  = l_m $,
   so
   $ f(m + 1) \leq N \cdot f(m),$
   that is 
    \begin{align*}
     f(m+1) \leq N^{m + 1-| \{ B \in p \vee q  \ : \  \max(B) \leq m +1\} | }
     \end{align*}
     and again
     (\ref{b:ineq}) follows as in Lemma \ref{fpq:1}(3). 
   
 The conclusion of the Lemma follows taking
 \begin{align*}
 C(m, r, \varepsilon) = 
  \sum_{ p, q \in \cP_2^{\varepsilon}( m , r)}
   \big| C_{ p, q} |
   +
  \sum_{
      \substack{
      p_1, q_1 \in \cP_2^{ \varepsilon^{ \prime }} ( m )  \\
      p_2, q_2 \in
        \cP_2^{ \varepsilon^{ \prime \prime}}( r )}
        } 
    \big|
     D_{ p_1 \oplus p_2, q_1 \oplus q_2 } -
             C_{ p_1, q_1} D_{ p_2. q_2} - D_{p_1, q_1} C_{p_2, q_2}
             \big|.
 \end{align*}
  \end{proof}

\begin{remark}
Similar argument as the one discussed at the beginning of this section can be used to show that Lemma \ref{lemma:cov} implies that the conclusion of Corollary \ref{cor:36} holds almost surely. 

\end{remark}

\section{Random permutations satisfy (C1), (C2) and (C3)}

In this section we show that a sequence of random permutations, chosen uniformly satisfy conditions (C),
 which proves Theorem \ref{thm:13} of the Introduction. We also show that condition
 (C3)
  is satisfied almost surely by two independent sequences of uniformly chosen permutations. By a uniformly chosen random permutation in $\mathcal{S}\left([N]^2\right)$ we understand a random permutation $\mathfrak{S}_N$ such that for any $\sigma_N\in\mathcal{S}\left([N]^2\right)$ one has
\begin{align*}
\P\left(\mathfrak{S}_N=\sigma_N\right)=\frac{1}{N^2!}.
\end{align*}

\begin{proposition}\label{prop:11}
	Consider a sequence of random permutations $(\mathfrak{S}_N)_{N\geq 1}$, such that for each $N\geq 1$ the permutation $\mathfrak{S}_N$  acts on the set $[N]\times[N]$. Then we have
	\begin{align}\label{eq:Prop22.2}
	\frac{Y_N(\mathfrak{S}_N,\mathfrak{S}_N) }{N^3}\to 0 \mbox{ almost surely as } N\to \infty.
	\end{align}
\end{proposition}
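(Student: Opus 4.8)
I want to show $Y_N(\mathfrak{S}_N,\mathfrak{S}_N)/N^3 \to 0$ almost surely, where $\mathfrak{S}_N$ is uniform on $\mathcal{S}([N]^2)$. The natural route is a first-moment (expectation) bound combined with Borel--Cantelli, but since $Y_N \le CN^4$ trivially, the expectation alone will not give summability of $\P(Y_N/N^3 > \varepsilon)$; I will need a second-moment (variance) estimate as well, or a concentration inequality. So the two-step plan is: (i) show $\E\big(Y_N(\mathfrak{S}_N,\mathfrak{S}_N)\big) = O(N^{2})$, hence $\E(Y_N/N^3) \to 0$; (ii) control the fluctuations well enough that $\sum_N \P(Y_N/N^3 > \varepsilon) < \infty$ for every $\varepsilon > 0$, then invoke Borel--Cantelli.

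\textbf{Step (i): the expectation.} Recall $Y_N = Y_{1,N}+Y_{2,N}$ with, e.g.,
\[
Y_{1,N}(\sigma,\sigma) = \big|\{(i,j,k)\in[N]^3 : \pi_1\circ\sigma(i,j) \in \{\pi_1\circ\sigma(i,k),\pi_1\circ\sigma(k,j)\}\}\big|.
\]
I would write $Y_{1,N} = \sum_{i,j,k} \mathbf{1}[\cdots]$ and take expectations, splitting into the two events $\pi_1\circ\sigma(i,j)=\pi_1\circ\sigma(i,k)$ and $\pi_1\circ\sigma(i,j)=\pi_1\circ\sigma(k,j)$. For a uniform permutation of $[N]^2$, the images $\sigma(i,j)$ and $\sigma(i,k)$ (for distinct input cells) are like sampling two distinct cells without replacement, and the probability that their first coordinates agree is $\frac{N-1}{N^2-1} = \frac{1}{N+1}$; when $j=k$ the indicator is automatically $1$ but there are only $N^2$ such triples. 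Thus $\E(Y_{1,N}) \le N^2 + 2\cdot N^3\cdot\frac{1}{N+1} = O(N^2)$, and similarly for $Y_{2,N}$. Hence $\E(Y_N) = O(N^2)$ and $\E(Y_N/N^3) = O(1/N) \to 0$.

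\textbf{Step (ii): the fluctuations and Borel--Cantelli.} By Markov, $\P(Y_N/N^3 > \varepsilon) \le \E(Y_N)/(\varepsilon N^3) = O(1/N)$, which is \emph{not} summable, so I need something sharper. There are two viable options. The first is a variance bound: show $\var(Y_N) = O(N^{?})$ with exponent $< 6$, say $\var(Y_N) = O(N^{4+\delta})$ would suffice since then $\P(|Y_N - \E Y_N| > \varepsilon N^3) \le \var(Y_N)/(\varepsilon^2 N^6) = O(N^{-2+\delta})$ is summable for small $\delta$; this requires expanding $\E(Y_N^2)$ as a sum over $6$-tuples of indices and carefully counting the coincidence patterns using the without-replacement structure — tedious but routine. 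The second, cleaner option is to use a concentration inequality for uniform random permutations: $Y_N$ is a function of $\mathfrak{S}_N$ that changes by $O(N)$ when one transposition is applied (swapping the images of two cells affects only $O(N)$ of the triples $(i,j,k)$), so a bounded-differences / Azuma-type inequality on $S_{N^2}$ (viewing $\mathfrak{S}_N$ as a product of independent transpositions via the Fisher--Yates construction, or using McDiarmid's inequality for random permutations) gives $\P(|Y_N - \E Y_N| > t) \le 2\exp(-ct^2/(N^2\cdot N^2)) = 2\exp(-ct^2/N^4)$. Taking $t = \varepsilon N^3$ yields $\P(Y_N/N^3 > 2\varepsilon) \le 2\exp(-c\varepsilon^2 N^2)$, which is summable. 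Then Borel--Cantelli finishes the proof.

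\textbf{Main obstacle.} I expect the real work to be in Step (ii): getting the right Lipschitz/bounded-differences constant for $Y_N$ under a transposition of the permutation, and correctly invoking a concentration result for the uniform measure on $S_{N^2}$ (the Fisher--Yates / sequential-sampling viewpoint is the standard way to reduce to independent coordinates). The expectation computation in Step (i) is straightforward combinatorics with sampling without replacement; the only care needed there is separating the "degenerate" triples (where two of $i,j,k$ coincide) from the generic ones, but those contribute only $O(N^2)$ and are harmless. I would therefore present Step (i) in full detail and Step (ii) via the bounded-differences route, citing a standard concentration statement for random permutations.
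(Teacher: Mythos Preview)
Your proposal is correct. Step~(i) matches the paper's computation: the paper also finds $\E(Y_N)=O(N^2)$ by the same sampling-without-replacement calculation (it in fact decomposes $Y_N=N^2+Y_N^{(rr)}+Y_N^{(rc)}+Y_N^{(cr)}+Y_N^{(cc)}$ and shows each piece has expectation $\sim N^2/2$, but the arithmetic is the same as yours).

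For Step~(ii) you take a genuinely different route. The paper goes through your first option, the variance bound, and carries it out in full: it writes $Y_N^{(rr)}=\sum I^{(j)}_{k,l}$, computes all the pairwise covariances of the indicators explicitly (distinguishing the case of four distinct cells from the case of three), and obtains $\var(Y_N^{(rr)})=O(N^2)$, so Chebyshev gives a summable bound $O(N^{-2})$. Your preferred route via a bounded-differences/Maurey-type concentration inequality on $S_{N^2}$ is cleaner and avoids the covariance bookkeeping; your Lipschitz estimate is right (a transposition affects at most $O(N)$ of the triples $(i,j,k)$), and with $N^2$ ``coordinates'' this yields $\exp(-c\varepsilon^2 N^2)$ tails, more than enough. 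The trade-off is that the paper's argument is entirely self-contained and elementary, whereas yours imports a standard but non-trivial concentration result for random permutations; on the other hand, yours gives exponential rather than polynomial tails and scales immediately if one later needs sharper deviation bounds.
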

\begin{remark}\label{proofofthm1.3}
The statement of the above proposition gives immediately that $(\mathfrak{S}_N)_{N\geq1}$ satisfies (C1) and (C2) almost surely, thus proving Theorem \ref{thm:13} of the Introduction.
\end{remark}

\begin{proof}[Proof of Proposition \ref{prop:11}]
	To makes formulas more transparent we will write $X_N$ and $Z_N$ for $X_N(\mathfrak{S}_N,id_N)$ and $Z_N(\mathfrak{S}_N)$ respectively. 

	Recall the definition of the random variable $Y_N$  
\begin{enumerate}
	\item[${}$]$ Y_{1,N}(\sigma_N, \sigma_N) =
	| \big\{  
	(i, j, k) \in [ N ]^3:\
	\pi_1 \circ \sigma_N (i, j) \in 
	\{ 
	\pi_1 \circ \sigma_N (i, k), 
	\pi_1 \circ \sigma_N ( k, j)   \} 
	\big\} |, $
	\item[${}$]
	$ Y_{2,N}(\sigma_N, \sigma_N) =
	| \big\{  
	(i, j, k) \in [ N ]^3:\
	\pi_2 \circ \sigma_N (i, j) \in 
	\{ 
	\pi_2 \circ \sigma_N (i, k), 
	\pi_2 \circ \sigma_N ( k, j)   \} 
	\big\} |, $
\end{enumerate}
and
\[ 
Y( \sigma_N, \sigma_N ) = Y_{1,N}(\sigma_N, \sigma_N ) 
+ Y_{2,N} ( \sigma_N , \sigma_N ).
\]	Observe that there is always an obvious choice which counts in the sum namely $k=i$ and $l=j$, but there are only $N^2$ such choices, let us first define new random variables:
	\begin{itemize}
		\item $Y^{(rr)}_N(\sigma_N)$ counts number of elements which were in the same row before the permutation and are in the same row after the permutation,
		\item $Y^{(rc)}_N(\sigma_N)$ counts number of elements which were in the same row before the permutation and are in the same column after the permutation,
		\item $Y^{(cr)}_N(\sigma_N)$ counts number of elements which were in the same column before the permutation and are in the same row after the permutation,
		\item $Y^{(cc)}_N(\sigma_N)$ counts number of elements which were in the same column before the permutation and are in the same column after the permutation.
	\end{itemize}
	 Then
	 \begin{align*}
	 	Y_N(\sigma_N,\sigma_N)=N^2+Y^{(rr)}_N(\sigma_N)+Y^{(rc)}_N(\sigma_N)+Y^{cr}_N(\sigma_N)+Y^{cc}_N(\sigma_N).
	 \end{align*}

	 We shall only show that $Y^{(rr)}_N(\mathfrak{S}_N)/N^2\to 1/2$  almost surely. A similar proof shows that remaining three variables also converge a.s. to $1/2$. In what follows we suppress in notation $\mathfrak{S}_N$, and write e.g. $Y^{(rr)}_N$ to mean the random variable $Y^{(rr)}_N(\mathfrak{S}_N)$. Our strategy is to use Chebyshev's inequality, that is we write
	\begin{align*}
	\P\left(\left|\frac{Y^{(rr)}_N-\E Y^{(rr)}_N}{N^2}\right|>\varepsilon \right)\leq\frac{\mathrm{Var}(Y^{(rr)}_N)}{\varepsilon^2N^4}
	\end{align*}
	and it suffices to show that $\mathrm{Var}(Y^{(rr)}_N(\mathfrak{S}_N))=O(N^2)$ and $\E Y_N^{(rr)}/N^2\to 1/2$.
	
	For $j,l,k\in[N]$ such that $k<l$ denote by $I^{(j)}_{k,l}$ the indicator of the event that elements $(k,j)$ and $(l,j)$ after $\mathfrak{S}_N$ are in the same row. Then one has
	\begin{align*}
	Y_N^{(rr)}=\sum_{j=1}^N\sum_{\substack{1\leq k<l\leq N}} I^{(j)}_{k,l}.
	\end{align*}
	We shall use this representation in order to calculate the expectation and estimate the variance of $Y_N^{(rr)}$. 
	
	Observe that for two elements to go to the same row we have: $N$ choices for the row, then $2 \binom{N}{2}$ choices for placing the elements in the chosen row. On the other hand we have $2 \binom{N^2}{2}$ of all possibilities of placing two fixed elements in the matrix, thus we have
	\begin{align*}
	\E\left(I^{(j)}_{k,l}\right)=\frac{N\binom{N}{2} 2}{\binom{N^2}{2} 2 }=\frac{1}{N+1}.
	\end{align*}
	Hence
	\begin{align*}
	\E Y_N^{(rr)}=\sum_{j=1}^N\sum_{\substack{1\leq k<l\leq N}} \E I^{(j)}_{k,l}=\sum_{j=1}^N\sum_{\substack{1\leq k<l\leq N}} \frac{1}{N+1}=\frac{N}{N+1} \binom{N}{2}.
	\end{align*}
	thus we proved that $\E Y_N^{(r)}/N^2\to 1/2$ as $N \to \infty$.
	
	It remains to show that $\mathrm{Var}=O(N^2)$. Of course we have	\begin{align}\label{eqn:var}
	\mathrm{Var}(Y_N^{(rr)})= \sum_{j=1}^N\sum_{\substack{1\leq k<l\leq N}} \mathrm{Var}\left(I^{(j)}_{k,l}\right)+\sum_{(j_1,k_1,l_1)\neq (j_1,k_1,l_1)} \mathrm{Cov}\left( I^{j_1}_{l_1,k_1}, I^{j_2}_{l_2,k_2}\right).
	\end{align}
	Since 
	\begin{align*}
	\E\left(\left(I^{(j)}_{k,l}\right)^2\right)=\E\left(I^{(j)}_{k,l}\right)=\frac{1}{N+1}
	\end{align*}
	we have $\mathrm{Var}\left(I^{(j)}_{k,l}\right)\leq 1/(N+1)$. Thus
	\begin{align*}
	\sum_{j=1}^N\sum_{\substack{1\leq k<l\leq N}} \mathrm{Var}\left(I^{(j)}_{k,l}\right)=O(N^2).
	\end{align*}
	
	Let us calculate the covariances. Thus we have to calculate $\E(I^{j_1}_{k_1,l_1}I^{j_2}_{k_2,l_2})$, we consider two cases: 
	
	$1^\mathrm{o}$ The two pairs $\{(k_1,j_1),(l_1,j_1)\}$ and $\{(k_2,j_2),(l_2,j_2)\}$ are different, that is $j_1 \neq j_2$ or ($j_1=j_2$ and $k_1\neq k_2$ and $l_1\neq l_2$). Remember that $k_1<l_1$ and $k_2<l_2$, so this condition indeed is equivalent to the fact that the two pairs to be different.
	
	$2^\mathrm{o}$ There is one common element in the two pairs that is the pairs are of the form $\{(j,k_1),(j,l_1)\}$ and $\{(j,k_2),(j,l_2)\}$ and we have $k_2\in\{k_1,l_1\}$ or $l_2\in\{k_1,l_1\}$. The second case says that in fact we have three elements from the same column and we ask for probability that all three will land after the permutation in the same row.
	
	Consider the case $1^\mathrm{o}$ then, we have two different pairs thus we have four elements of a matrix, that after the permutation are supposed to be placed such that elements of each pair are in the same row, it could be that all four elements are in the same row and we have $N\binom{N}{4} 4!$ possibilities for this or each pair is in different row and we have $\binom{N}{2}^3 (2!)^3$ possibilities. We have $\binom{N^2}{4} 4!$ choices to place $4$ elements in an $N\times N$ matrix thus we get 
	\begin{align*}
	\E I^{(j_1)}_{k_1,l_1} I^{(j_2)}_{k_2,l_2}=\frac{N\binom{N}{4} 4!+ \binom{N}{2}^3 (2!)^3}{\binom{N^2}{4}4!}=\frac{N^3-N^2-4N+6}{(N+1)(N^2-2)(N^2-3)}.
	\end{align*}
	Calculating the covariance we get
	
	\begin{eqnarray*}
	\mathrm{Cov}\left(I^{(j_1)}_{k_1,l_1},I^{(j_2)}_{k_2,l_2}\right) &=&
	\frac{N^3-N^2-4N+6}{(N+1)(N^2-2)(N^2-3)}-\frac{1}{(N+1)^2} \\
	 & = & \frac{2 N}{(N+1)^2 \left(N^2-2)(N^2-3)\right)}.
	\end{eqnarray*}
	Thus in the case $1^\mathrm{o}$ we get that $\mathrm{Cov}\left(I^{(j_1)}_{k_1,l_1},I^{(j_2)}_{k_2,l_2}\right)=O(1/N^5)$ for $N>1$.

	Consider the case $2^\mathrm{o}$ then as explained above we only have three elements which were in the same column and after the permutation all three are supposed to be in the same row thus similarly as above we have
	\begin{align*}
	\E I^{(j_1)}_{k_1,l_1} I^{(j_2)}_{k_2,l_2}=\frac{N\binom{N}{3} 3!}{\binom{N^2}{3}3!}=\frac{N-2}{(N+1)(N^2-2)}.
	\end{align*}
	Calculating covariance we get
	\begin{align*}
	\mathrm{Cov}\left(I^{(j_1)}_{k_1,l_1},I^{(j_2)}_{k_2,l_2}\right)=
	\frac{N-2}{(N+1)(N^2-2)}-\frac{1}{(N+1)^2}=\frac{-N}{(N+1)^2 (N^2-2)}
	\end{align*}
	which is negative as soon as $N>1$, so we can omit this terms as we look for an upper bound for variance.
	
	Now observe that we have the sum of covariances in \eqref{eqn:var} contains $\binom{N^2 (N-1)/2}{2}$ terms, as we have $N^2 (N-1)/2$ pairs which were in the same row and now we choose pairs among them. Hence we have
	\begin{align*}
	\sum_{(j_1,k_1,l_1)\neq (j_1,k_1,l_1)} \mathrm{Cov}\left( I^{j_1}_{l_1,k_1}, I^{j_2}_{l_2,k_2}\right)=O(N),
	\end{align*}
	which completes the proof of the fact that $Y_N^{rr}\to 1/2$ a.s. with $N\to\infty$.
	
\end{proof}

\begin{proposition}\label{prop:73}
	Consider two sequence of random permutations $(\mathfrak{S}^{(1)}_N)_{N\geq 1},(\mathfrak{S}^{(2)}_N)_{N\geq 1}$, such that for each $N\geq 1$ the permutation $\mathfrak{S^{(i)}}_N$ acts on the set $[N]\times[N]$, for $i=1,2$. Then we have
	\begin{align}\label{eqn:29}
	\frac{X_N(\mathfrak{S}^{(1)}_N,\mathfrak{S}^{(2)}_N) }{N^2}\to 0 \mbox{ almost surely as } N\to \infty.
	\end{align}
	\begin{align}\label{eqn:30}
	\frac{Y_N(\mathfrak{S}^{(1)}_N,\mathfrak{S}^{(2)}_N)}{N^3}\to 1 \mbox{ almost surely as } N\to \infty.
	\end{align}
\end{proposition}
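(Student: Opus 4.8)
The plan is to deduce both statements \eqref{eqn:29} and \eqref{eqn:30} from Proposition~\ref{prop:11} — the single--permutation result — by purely deterministic manipulations of $X_N$ and $Y_N$, so that no new moment or variance computation is needed. The one fact about Proposition~\ref{prop:11} that I will lean on is that its proof only uses the \emph{marginal} uniformity of each $\mathfrak{S}_N$ together with Chebyshev's inequality and (the first) Borel--Cantelli lemma applied separately for each $N$; it never uses any joint structure in $N$. Hence it applies verbatim to any sequence $(\mathfrak{S}_N)_N$ in which each $\mathfrak{S}_N$ is uniform on $\mathcal{S}([N]^2)$, and for such a sequence it yields $Y_N(\mathfrak{S}_N,\mathfrak{S}_N)/N^3\to 0$ a.s.\ directly, and, via Remark~\ref{c:c1c2} (the bound $X_N(\sigma,id_N)\le \tfrac1N Y_N(\sigma,\sigma)$) and Remark~\ref{proofofthm1.3}, also $X_N(\mathfrak{S}_N,id_N)/N^2\to 0$ a.s.

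For \eqref{eqn:29} I would first record the algebraic identity $X_N(\sigma,\tau)=X_N(\tau^{-1}\sigma,id_N)$: for a triple $(i,j,k)$ the two conditions $\sigma(i,j)=\tau(i,k)$ and $\sigma(i,j)=\tau(k,j)$ defining its contribution to $X_N(\sigma,\tau)$ become, after applying $\tau^{-1}$, exactly $\tau^{-1}\sigma(i,j)=(i,k)$ and $\tau^{-1}\sigma(i,j)=(k,j)$, i.e.\ the conditions counting $(i,j,k)$ in $X_N(\tau^{-1}\sigma,id_N)$. Since $\mathfrak{S}^{(1)}_N$ and $\mathfrak{S}^{(2)}_N$ are independent and each uniform on $\mathcal{S}([N]^2)$, the composition $\rho_N:=(\mathfrak{S}^{(2)}_N)^{-1}\mathfrak{S}^{(1)}_N$ is again uniform on $\mathcal{S}([N]^2)$ for every $N$. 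Applying the single--permutation result to $(\rho_N)_N$ gives $X_N(\rho_N,id_N)/N^2\to 0$ a.s., and by the identity this is precisely \eqref{eqn:29}.

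For \eqref{eqn:30} the engine is a Cauchy--Schwarz comparison of the mixed quantity with the diagonal ones,
\[
Y_N(\sigma,\mu)\ \le\ 4\sqrt{Y_N(\sigma,\sigma)\,Y_N(\mu,\mu)}\ \le\ 2\bigl(Y_N(\sigma,\sigma)+Y_N(\mu,\mu)\bigr),
\]
valid for all $\sigma,\mu\in\mathcal{S}([N]^2)$. To obtain it, split $Y_{1,N}(\sigma,\mu)$ into the two counts $|\{(i,j,k):\pi_1\sigma(i,j)=\pi_1\mu(i,k)\}|$ and $|\{(i,j,k):\pi_1\sigma(i,j)=\pi_1\mu(k,j)\}|$; grouping the first by the row index $i$ and by the common value $a$ turns it into $\sum_{i,a}d_{i,a}c_{i,a}$ with $d_{i,a}=|\{j:\pi_1\sigma(i,j)=a\}|$ and $c_{i,a}=|\{k:\pi_1\mu(i,k)=a\}|$, and Cauchy--Schwarz over the pairs $(i,a)$ together with the identification $\sum_{i,a}d_{i,a}^2=|\{(i,j,k):\pi_1\sigma(i,j)=\pi_1\sigma(i,k)\}|\le Y_{1,N}(\sigma,\sigma)$ (and the same for $c$ against $Y_{1,N}(\mu,\mu)$) finishes this piece; the count grouped by the column index, and the two $\pi_2$--counts via the row$\leftrightarrow$column symmetry, are handled identically. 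Dividing by $N^3$ and feeding in $Y_N(\mathfrak{S}^{(i)}_N,\mathfrak{S}^{(i)}_N)/N^3\to 0$ a.s.\ for $i=1,2$ (again from the single--permutation result, which does not need independence across $N$) yields $Y_N(\mathfrak{S}^{(1)}_N,\mathfrak{S}^{(2)}_N)/N^3\to 0$ a.s.\ on the intersection of the two full--measure events, which is \eqref{eqn:30} — the limiting value being $0$, as condition (C3) requires.

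I expect the only genuinely delicate points to be bookkeeping ones: verifying that the proof of Proposition~\ref{prop:11} is robust enough to be reused for a sequence that is merely marginally uniform in each $N$ (so that it applies to $(\rho_N)_N$ and to each $(\mathfrak{S}^{(i)}_N)_N$), and pinning down the sub-count identifications inside $Y_{1,N}(\sigma,\sigma)$ — in particular $\sum_{j,a}|\{i:\pi_1\sigma(i,j)=a\}|^2=|\{(i,i',j):\pi_1\sigma(i,j)=\pi_1\sigma(i',j)\}|\le Y_{1,N}(\sigma,\sigma)$ for the column grouping — so the Cauchy--Schwarz inequality is legitimate. No fresh probabilistic estimate is needed. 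A self-contained alternative is to apply the first/second moment method directly to $X_N$ and $Y_N$ for the two independent permutations, using that $\sigma(i,j)$ is marginally uniform on $[N]^2$: one gets $\E X_N=O(N)$, $\E Y_N=O(N^2)$ with comparable variance bounds, and Chebyshev plus Borel--Cantelli conclude; but this essentially re-derives Proposition~\ref{prop:11} and is longer.
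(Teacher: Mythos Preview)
Your argument for \eqref{eqn:29} coincides with the paper's: both use the identity $X_N(\sigma,\tau)=X_N(\tau^{-1}\sigma,id_N)$ and apply Proposition~\ref{prop:11} to the composition $(\mathfrak{S}^{(2)}_N)^{-1}\mathfrak{S}^{(1)}_N$. For \eqref{eqn:30}, however, you take a genuinely different and cleaner route. The paper simply says one ``proceeds similarly as in the proof of Proposition~\ref{prop:11}'', i.e.\ redoes the first/second--moment computation (indicators, Chebyshev, Borel--Cantelli) for the two--permutation quantity $Y_N(\mathfrak{S}^{(1)}_N,\mathfrak{S}^{(2)}_N)$. You instead establish the purely deterministic Cauchy--Schwarz bound $Y_N(\sigma,\mu)\le 4\sqrt{Y_N(\sigma,\sigma)\,Y_N(\mu,\mu)}$ and then invoke Proposition~\ref{prop:11} once for each $\mathfrak{S}^{(i)}_N$. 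The sub-count identifications you flag as delicate (e.g.\ $\sum_{j,a}|\{i:\pi_1\sigma(i,j)=a\}|^2=|\{(i,j,k):\pi_1\sigma(i,j)=\pi_1\sigma(k,j)\}|\le Y_{1,N}(\sigma,\sigma)$) are correct and routine, so the argument goes through. Your method is shorter, needs no new probabilistic estimate, and as a bonus does not use the independence of $\mathfrak{S}^{(1)}_N$ and $\mathfrak{S}^{(2)}_N$ for \eqref{eqn:30} at all---only the marginal uniformity of each. You are also right that the limit in \eqref{eqn:30} must be $0$ rather than $1$: with limit $1$ the proposition would contradict the very condition~(C3) that the subsequent corollary claims to deduce from it.
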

\begin{corollary}
	The statement of the above proposition gives immediately that independent sequences of permutation satisfy condition 
	(C3).
	 Thus one can formulate Theorem \ref{thm:14} in almost sure sense with independent sequences of random permutations similarly as Theorem \ref{thm:13}.
\end{corollary}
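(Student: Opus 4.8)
The plan is to establish the two almost sure limits of Proposition \ref{prop:73}; the corollary is then immediate, since those two limits are precisely the defining relations of condition (C3) for the pair $(\mathfrak{S}^{(1)}_N)_N,(\mathfrak{S}^{(2)}_N)_N$, while each of the two sequences separately satisfies (C) almost surely by Proposition \ref{prop:11}, so that (after intersecting the finitely many probability-one events attached to the sequences and to their pairs) the hypotheses of the almost sure version of Theorem \ref{thm:14} hold with probability one. To prove the proposition I would imitate the scheme of the proof of Proposition \ref{prop:11}: write $X_N:=X_N(\mathfrak{S}^{(1)}_N,\mathfrak{S}^{(2)}_N)$ and $Y_N:=Y_N(\mathfrak{S}^{(1)}_N,\mathfrak{S}^{(2)}_N)$ as sums of indicator random variables indexed by $[N]^3$, compute (or sharply estimate) their expectations, bound their variances, and finish with Chebyshev's inequality and the Borel--Cantelli lemma. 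Concretely, for $(i,j,k)\in[N]^3$ set $J_{i,j,k}=\mathbf{1}\{\mathfrak{S}^{(1)}_N(i,j)\in\{\mathfrak{S}^{(2)}_N(i,k),\mathfrak{S}^{(2)}_N(k,j)\}\}$ and, for $\ell\in\{1,2\}$, $L^{(\ell)}_{i,j,k}=\mathbf{1}\{\pi_\ell\circ\mathfrak{S}^{(1)}_N(i,j)\in\{\pi_\ell\circ\mathfrak{S}^{(2)}_N(i,k),\pi_\ell\circ\mathfrak{S}^{(2)}_N(k,j)\}\}$, so that $X_N=\sum_{i,j,k}J_{i,j,k}$ and $Y_N=\sum_{i,j,k}(L^{(1)}_{i,j,k}+L^{(2)}_{i,j,k})$.

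For the first moments I would use that, since $\mathfrak{S}^{(1)}_N$ and $\mathfrak{S}^{(2)}_N$ are independent and $\mathfrak{S}^{(2)}_N(x)$ is uniformly distributed on $[N]^2$ for every fixed $x$, conditioning on $\mathfrak{S}^{(2)}_N$ gives $\E[J_{i,j,k}]\le 2/N^2$ (the conditional target set contains at most two of the $N^2$ possible values) and likewise $\E[L^{(\ell)}_{i,j,k}]\le 2/N$. Summing, $\E X_N\le 2N$ and $\E Y_N\le 4N^2$, so $\E X_N/N^2\to0$ and $\E Y_N/N^3\to0$. A finer count, separating off the $O(N^2)$ degenerate triples with $k\in\{i,j\}$, would give the exact leading asymptotics, but the crude bounds are all that is needed.

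The variance estimates are the real work. For $X_N$ I would expand $\E[X_N^2]=\sum_{(i,j,k),(i',j',k')}\E[J_{i,j,k}J_{i',j',k'}]$ and group the pairs of triples by the coincidence pattern among the six matrix positions $(i,j),(i,k),(k,j),(i',j'),(i',k'),(k',j')$. When these six positions are pairwise distinct, injectivity of $\mathfrak{S}^{(2)}_N$ forces the two conditional target sets to be disjoint, so conditioning on $\mathfrak{S}^{(2)}_N$ yields $\E[J_{i,j,k}J_{i',j',k'}]\le 4/(N^2(N^2-1))$; there are $O(N^6)$ such pairs, contributing $O(N^2)$. For every other coincidence pattern there are $O(N^5)$ pairs or fewer, and again using injectivity of $\mathfrak{S}^{(2)}_N$ one checks that the relevant target sets are still typically disjoint, so these blocks contribute $o(N^2)$; altogether $\mathrm{Var}(X_N)=O(N^2)$. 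The identical bookkeeping for $Y_N$, now with target sets sitting inside $[N]$ rather than $[N]^2$ (so that two distinct positions may genuinely have the same image under $\pi_\ell\circ\mathfrak{S}^{(2)}_N$), gives covariance $O(1/N^2)$ for a generic pair and hence $\mathrm{Var}(Y_N)=O(N^6)\cdot O(1/N^2)=O(N^4)$, the non-generic patterns again being of smaller order.

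Putting the pieces together, Chebyshev's inequality gives, for every $\varepsilon>0$,
\[
\sum_N\P\big(|X_N-\E X_N|>\varepsilon N^2\big)\le\sum_N\frac{\mathrm{Var}(X_N)}{\varepsilon^2N^4}=O\Big(\sum_N N^{-2}\Big)<\infty,
\]
\[
\sum_N\P\big(|Y_N-\E Y_N|>\varepsilon N^3\big)\le\sum_N\frac{\mathrm{Var}(Y_N)}{\varepsilon^2N^6}=O\Big(\sum_N N^{-2}\Big)<\infty,
\]
so, combining these with the first-moment asymptotics and the Borel--Cantelli lemma, $X_N/N^2\to0$ and $Y_N/N^3\to0$ almost surely, which is the content of the proposition, and the corollary follows as indicated. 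The one genuinely delicate point is the variance bound for $X_N$: the naive bounded-difference (Efron--Stein-type) argument --- a single transposition of either permutation changes $X_N$ by at most $O(N)$, whence $\mathrm{Var}(X_N)=O\big(N^2\cdot N^2\big)=O(N^4)$ --- is too weak, as it does not even force $X_N/N^2$ to concentrate; so one really has to carry out the covariance count above and exploit injectivity of $\mathfrak{S}^{(2)}_N$ to see that the ``shared index'' contributions are smaller than the bound $\E[J_{i,j,k}]\le N^{-2}$ would suggest. For $Y_N$ the corresponding margin is comfortable.
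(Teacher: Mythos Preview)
Your approach is correct and, for the $Y_N$ part, essentially coincides with the paper's: the paper simply says ``one proceeds similarly as in the proof of Proposition \ref{prop:11}'', meaning exactly the indicator decomposition plus Chebyshev--Borel--Cantelli scheme you carry out.

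For the $X_N$ part, however, the paper takes a much shorter route that you missed. Observe that
\[
X_N(\sigma,\tau)=\big|\{(i,j,k):\sigma(i,j)\in\{\tau(i,k),\tau(k,j)\}\}\big|
=\big|\{(i,j,k):\tau^{-1}\sigma(i,j)\in\{(i,k),(k,j)\}\}\big|
=X_N(\tau^{-1}\sigma,\mathrm{id}).
\]
Since $\mathfrak{S}^{(1)}_N$ and $\mathfrak{S}^{(2)}_N$ are independent and uniform, the composition $(\mathfrak{S}^{(2)}_N)^{-1}\mathfrak{S}^{(1)}_N$ is again uniform on $\mathcal{S}([N]^2)$, so the statement $X_N(\mathfrak{S}^{(1)}_N,\mathfrak{S}^{(2)}_N)/N^2\to0$ a.s.\ reduces immediately to the single-permutation condition (C2), which was already established (via Proposition \ref{prop:11} and Remark \ref{c:c1c2}). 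This sidesteps entirely what you correctly flag as the ``genuinely delicate point'' --- the $O(N^2)$ variance bound for $X_N$ --- and no new covariance bookkeeping is needed. Your direct computation would also go through, but the reduction is both shorter and conceptually cleaner: it explains \emph{why} the two-permutation case is no harder than the one-permutation case for $X_N$.
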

\begin{proof}[Proof of Proposition \ref{prop:73}]
	In order to prove \eqref{eqn:29} it is enough to observe that $\left(\mathfrak{S}^{(2)}_N\right)^{-1}\circ\mathfrak{S}^{(1)}_N$ is also a uniformly distributed permutation and recall that Proposition \ref{prop:11} implies that 
	(C2)
	 is almost surely satisfied by a sequence of random permutations.
	
	For the proof of \eqref{eqn:29} one proceeds similarly as in the proof of Proposition \ref{prop:11}.
\end{proof}


\end{document}